\documentclass[12pt,twoside]{amsart}

\usepackage{charter, amsmath, amsbsy, amsfonts, amssymb, stmaryrd, amsthm, mathrsfs, graphicx, amscd, tikz-cd, cancel, array, tabulary, bm}
\usepackage{srcltx}
\usepackage[mathscr]{eucal}

\usepackage{xcolor}

\usepackage{fbb}

\usepackage[
	hypertexnames=false,
	hyperindex,
	pagebackref,
	pdftex,
	breaklinks=true,
	bookmarks=false,
	colorlinks,
	linkcolor=red,
	citecolor=blue,
	urlcolor=orange,
]{hyperref}

\usepackage[all]{xy}
\usepackage{latexsym,graphicx}

\usepackage{setspace}

\setlength{\hoffset}{-1.6cm}
\setlength{\voffset}{-0.5cm}
\addtolength{\textwidth}{3.5cm}
\addtolength{\textheight}{1.5cm}

%%%%%%%%%%%%%%%%%%%%%%%Math Alphabets%%%%%%%%%%%%%%%%%%%5%%%

%----------------------------------------------Blackboard Bold------------------------%
\def\AA{{\mathbb A}}

\def\CC{{\mathbb C}}

\def\GG{{\mathbb G}}

\def\NN{{\mathbb N}}

\def\QQ{{\mathbb Q}} 
\def\RR{{\mathbb R}} 
\def\SS{{\mathbb S}}

\def\UU{{\mathbb U}} 
\def\VV{{\mathbb V}} 
 
\def\ZZ{{\mathbb Z}} 
%--------------------------------Blackboard Bold--------------------------------------------%

%---------------------------------------------------------Calligraphic-------------------------%
\def\Acal{{\mathcal A}}

\def\Dcal{{\mathcal D}}
 
\def\Fcal{{\mathcal F}}

\def\Hcal{{\mathcal H}}

\def\Kcal{{\mathcal K}}

\def\Ocal{{\mathcal O}}

\def\Vcal{{\mathcal V}} 
\def\Xcal{{\mathcal X}} 
 
%--------------------------------Calligraphic-------------------------------------------%

%-------------------------------------Fraktur----------------------------------------------%

\def\mfrak{{\mathfrak m}}
 
\def\gfrak{{\mathfrak g}}
\def\kfrak{{\mathfrak k}}

\def\mfrak{{\mathfrak m}}

\def\pfrak{{\mathfrak p}}

%------------------------------------------Fraktur-------------------------------%

%----------------------------------math script-----------------------------------------%

\def\Hscr{{\mathscr H}}

\def\Kscr{{\mathscr K}}
\def\Pscr{{\mathscr P}}
\def\Qscr{{\mathscr Q}}
\def\Sscr{{\mathscr S}}
 
%------------------------------------------------math script-----------------------%

%----------------------------------------mathbold------------------------------%
\def\Abold{{\bm A}}

\def\Bbold{{\bm B}}

\def\Pbold{{\bm P}}

\def\Ebold{{\pmb E}}
\def\Fbold{{\pmb F}}
\def\Gbold{{\pmb G}}
\def\Hbold{{\pmb H}}
\def\Tbold{{\pmb T}}
\def\Lbold{{\bm L}}

%----------------------------------mathbold----------------------------------%

%-----------------------------------Greek---------------------------------%
\def\G{{\Gamma}}
\def\g{{\gamma}} 

%-----------------------------------------Greek------------------------%

%------------------------------------mathsymbols--------------------------------%

\def\Ztilde{\tilde{Z}}

\def\psitilde{\tilde{\psi}}

\def\Hod{{\rm Hod}}
\def\Sh{{\rm Sh}}

\def\bs{\backslash}

%--------------------------------------mathsymbols-------------------------%

%%%%%%%%%%%%%%%%%%%%%%%Math Alphabets%%%%%%%%%%%%%%%%%%%%%%

\def\pt{{\scriptscriptstyle\bullet}}

\newcommand\supp{\operatorname{supp}}
\newcommand\Lie{\operatorname{Lie}}
\newcommand\ad{\operatorname{ad}}
\newcommand\Ad{\operatorname{Ad}}
\newcommand\Int{\operatorname{Int}}

\newcommand\Res{\operatorname{Res}}
\newcommand\GL{\pmb{\operatorname{GL}}}

\newcommand\Zcent{\operatorname{\bf Z}}

\newcommand\uRad{\operatorname{R_u}}

\newcommand\Hom{\operatorname{Hom}}

\newcommand\im{\operatorname{Im}}

\newcommand\Nor{\operatorname{\bf N}}

\newcommand{\HL}{\textnormal{HL}}
 
%%%%%%%%%%%%%%%%% environments %%%%%%%%%%%%%%%%%%

\theoremstyle{plain}
\newtheorem{theorem}{Theorem}[section] 
\newtheorem{lemma}[theorem]{Lemma}
\newtheorem{sublemma}[theorem]{Sublemma}

\newtheorem{proposition}[theorem]{Proposition}

\newtheorem{corollary}[theorem]{Corollary}
\newtheorem{conjecture}[theorem]{Conjecture}

\theoremstyle{definition} 
\newtheorem{definition}[theorem]{Definition}

\theoremstyle{definition}

\theoremstyle{definition}

\theoremstyle{remark} 
\newtheorem{remark}[theorem]{Remark}
\newtheorem{question}[theorem]{Question}

\theoremstyle{remark}

\theoremstyle{remark}

%%%%%%%%%%%%%%%%% environments %%%%%%%%%%%%%%%%%%

\begin{document}

\title[Geometric André-Oort for VHS]{On the geometric André-Oort conjecture for variations of Hodge structures}

\author{\scshape Jiaming Chen}
\address{Universit\'e de Paris (Institut de math\'ematiques de Jussieu - Paris Rive Gauche, Paris)} 
\email{jiaming.chen@imj-prg.fr}

\begin{abstract}
Let $\VV$ be a polarized variation of integral Hodge structure on a smooth complex quasi-projective variety $S$.
In this paper, we show that the union of the \textit{non-factor} special subvarieties for $(S, \VV)$, which are of Shimura type with dominant period maps, is a finite union of special subvarieties of $S$. This generalizes previous results of Clozel and Ullmo \cite{ClozelUllmo05}, Ullmo \cite{Ullmo07} on the distribution of the non-factor (in particular, strongly) special subvarieties in a Shimura variety to the non-classical setting and also answers positively the geometric part of a conjecture of Klingler on the Andr\'e-Oort conjecture for variations of Hodge structures.
\end{abstract}

\maketitle

\section{Introduction}
\subsection{Motivation} The classical Andr\'e-Oort conjecture, which describes the distribution of CM points (points with complex multiplication) on a Shimura variety, asserts that the Zariski closure of a subset of CM points in a Shimura variety is special (namely, an irreducible component of a Hecke translate of a Shimura subvariety). It is the analog in a Hodge-theoretic context of the Manin-Mumford conjecture (a theorem of  Raynaud (\cite{Raynaud88}) stating that an irreducible subvariety of a complex abelian variety containing a Zariski-dense set of torsion points is the torsion translate of an abelian subvariety. It has recently been proved for the Shimura variety $\Acal_g$ moduli space of principally polarized complex abelian varieties of dimension $g$ (and more generally for mixed Shimura varieties whose pure part are of abelian type) following a strategy proposed by Pila and Zannier, through the work of many authors (\cite{PilaTsimerman14}, \cite{KUY16}, \cite{Gao17}, \cite{AGHM18}, \cite{YuanZhang18}, \cite {Tsimerman18}). Recently, Klingler (\cite{Klingler17}) formulated a generalization of the Andr\'e-Oort conjecture (in fact, of the more general Zilber-Pink's conjecture on atypical intersections in Shimura varieties) for any admissible variation of mixed Hodge structures on a smooth quasi-projective variety. 
\\

This paper studies a particular case of Klingler's generalized André-Oort conjecture for pure variations of integral Hodge structures. 

\subsection{Hodge locus} Let $\VV\to S^{\text{an}}$ be a polarized \footnote{We only consider polarizable variations of Hodge structures throughout the paper.} variation of integral Hodge structure (usually abbreviated $\ZZ$-VHS) of weight $p\in\ZZ$ on a smooth irreducible complex quasi-projective variety $S$. Thus $\VV$ is a triple $(\VV_\ZZ, F^\pt, \varphi)$, where $\VV_\ZZ$ is a local system of finite free $\ZZ$-modules on the complex manifold $S^{\text{an}}$ --- the analytification of $S$, $F^\pt$ is a decreasing filtration by holomorphic subbundles on the holomorphic bundle $(\Vcal^{\text{an}} := \VV_\ZZ\otimes_{\underline{\ZZ}_{S^{\text{an}}}}\Ocal_{S^{\text{an}}}, \nabla^{\text{an}})$ satisfying Griffiths' transversality condition
\begin{equation}\label{Grifftrans}
\nabla^{\text{an}} F^\pt \subset \Omega^1_{S^{\text{an}}} \otimes_{\Ocal_{S^{\text{an}}}} F^{\pt-1}
\end{equation}
and $\varphi: \VV_\ZZ \otimes \VV_\ZZ \to \underline{\ZZ}_{S^{\text{an}}}(-p)$ is a bilinear pairing of local systems, such that $(\VV_{\ZZ, s}, F^\pt_s, \varphi_s)$ is a polarized $\ZZ$-Hodge structure of weight $p$ for all $ s\in S^{\text{an}}$. This definition is an abstraction of the geometric case corresponding to $\VV_\ZZ = (R^pf_* \ZZ_{\Xcal^{\text{an}}})_{\textnormal{prim}}/\text{torsion}$ (for $p\geq0$), the primitive part of the local system of the $p$-th integral cohomologies modulo torsion of the fibers of a smooth projective morphism $f: \Xcal \rightarrow S$ and $(\Vcal^{\text{an}}, \nabla^{\text{an}})$ the Gauss-Manin connection. Following Griffiths [cf. \cite{Sch73} Theorem (4.13)] the holomorphic bundle $\Vcal^{\text{an}}$ admits a unique algebraic structure $\Vcal$ such that the holomorphic connection $\nabla^{\text{an}}$ is the analytification of an algebraic connection $\nabla$ on $\Vcal$ which is regular, and the filtration $F^\pt\Vcal^{\text{an}}$ is the analytification of an algebraic filtration $F^\pt\Vcal$. Thus from now on we will omit $^\text{an}$ from the notations and the meaning will be clear from the context.
\\

Inspired by the rational Hodge conjecture, one would like to know how the Hodge locus $\HL(S,\VV^\otimes)\subset S$ is distributed in $S$. Here $\HL(S, \VV^\otimes)$ is by definition the subset of points $s$ of $S$ for which exceptional Hodge classes \footnote{In this paper, by a Hodge class, we will always mean a Hodge class of type $(0, 0)$.} do occur in $\VV_{\QQ, s}^m \otimes (\VV_{\QQ,s}^\vee)^n$ for some $m, n\in\ZZ_{>0}$, where $\VV_{\QQ,s}^\vee$ denotes the $\QQ$-Hodge structure dual to $\VV_{\QQ, s}$.  
\\

The Tannakian formalism available for Hodge structures gives us a particularly useful group-theoretic description of the Hodge locus $\HL(S, \VV^\otimes)$. Recall that for every $s\in S$, the \textit{Mumford-Tate group} $\Gbold_s$ of the Hodge structure $\VV_{\QQ,s}$ is the Tannakian group of the Tannakian subcategory $<\VV_{\QQ,s}^\otimes>$ of pure polarized Hodge structures tensorially generated by $\VV_{\QQ,s}$ and $\VV_{\QQ,s}^\vee$. Equivalently, the group $\Gbold_s$ is the stabilizer of the Hodge classes  in the rational Hodge structures tensorially generated by $\VV_{\QQ,s}$ and its dual.  The group $\Gbold_s$ is a connected reductive algebraic $\QQ$-group, canonically endowed with a morphism of real algebraic groups $h_s: \SS:=\Res_{\CC/\RR}\GG_m \to\Gbold_{s,\RR}$. Let $Z\subset S$ be an irreducible algebraic subvariety of $S$. A point $s$ in the smooth locus $Z^\text{sm}$ of $Z$ is said to be \textit{Hodge generic} for the restriction $\VV |_{Z^\text{sm}}$ if $\Gbold_s$ is maximal when $s$ ranges through $Z^\text{sm}$. Since $Z$ is irreducible, two Hodge generic points of $Z^\text{sm}$ have the same Mumford-Tate group, called the \textit{generic Mumford-Tate group} $\Gbold_Z$ of $(Z, \VV |_{Z^\text{sm}})$. Then the Hodge locus $\HL(S, \VV^\otimes)$ is also the subset of points of $S$ which are not Hodge generic.
\\

A fundamental result of Cattani-Deligne-Kaplan \cite{CattaniDeligneKaplan95} states that $\HL(S, \VV^\otimes)$ is a countable union of closed irreducible strict algebraic subvarieties of $S$. \\

\begin{definition} \label{special:def}
Let $\VV$ be a $\ZZ$-VHS on a smooth irreducible complex quasi-projective variety $S$. 

A closed irreducible algebraic subvariety $Z$ of $S$ is called \textit{special} for $\VV$, if it is maximal among the closed irreducible algebraic subvarieties of $S$ with the same generic Mumford-Tate group as $Z$.

Special subvarieties of dimension zero are called {\em special points} for $(S, \VV)$. 

A special point $s\in S$ whose Mumford-Tate group $\Gbold_s$ is commutative is called a \textit{CM point} for $(S, \VV)$.
\end{definition}

So by definition, if $Z\subset S$ is a special subvariety for $(S, \VV)$, then $Z$ is either contained in $\HL(S, \VV^\otimes)$ (in which case we call $Z$ \textit{strict}), or $Z = S$.\\

Choose $s\in Z^{\rm sm}$ and let $\Dcal_Z$ be the $\Gbold_Z(\RR)$-conjugacy class of $h_s$. The pair $(\Gbold_Z, \Dcal_Z)$ is called the \textit{generic Hodge datum} for $(Z, \VV |_{Z^\text{sm}})$.

\begin{definition}
	Let $Z\subset S$ be a special subvariety for $\VV$. Then $Z$ is called {\em of Shimura type} if the generic Hodge datum $(\Gbold_Z, \Dcal_Z)$ for $(Z, \VV |_{Z^\text{sm}})$ is a Shimura datum (see \cite{Mil05} Section $5$ for the definition of Shimura datum).
\end{definition}

Notice that CM points for $(S, \VV)$ are of Shimura type.\\

The problem we are interested in can be phrased vaguely as follows:

\begin{question} \label{CMpoint}
Given a $\ZZ$-VHS on a smooth irreducible complex quasi-projective variety, can we describe the distribution of its CM points, or more generally of its special subvarieties of Shimura type?
\end{question}

\subsection[Andr\'e-Oort for VHS]{Andr\'e-Oort conjecture for variations of Hodge structures}
\subsubsection{Variations of Hodge structures of Shimura type and of general Hodge type}

We keep the same notations as in the previous section. The Griffiths' transversality condition (\ref{Grifftrans}) establishes a fundamental dichotomy between $\ZZ$-VHS {\em of Shimura type} (called {\em classical} in \cite{GGK}) for which the generic Hodge datum $(\Gbold, \Dcal)$ is a Shimura datum  and $\ZZ$-VHS of general Hodge type (called {\em non-classical} in \cite{GGK}). Roughly speaking, a $\ZZ$-VHS is of Shimura type if it is an element of some $<\VV_{\QQ}^\otimes>$, for $\VV$ an effective $\ZZ$-VHS of weight $p=1$ (i.e., a family of abelian varieties), or weight $p=2$ and very restricted Hodge type (like family of $K3$-surfaces). It is the Hodge-theoretic incarnation of a family of abelian motives. On the other hand, variations of integral Hodge structures of general Hodge type form the vast majority of $\ZZ$-VHS, incarnating families of non-abelian motives.
\\

For a $\ZZ$-VHS of Shimura type, the Hodge filtration is so short that the Griffiths' transversality condition is automatically satisfied. As a result, classifying spaces do exist for $\ZZ$-VHS of Shimura type: these are exactly the Shimura varieties $\Sh_K(\Gbold, \Dcal)$ (see \cite{Mil05} Section $5$ for the definition of a Shimura datum), which are algebraic varieties (canonically defined over a number field) generalizing the moduli space $\mathcal{A}_g$ of principally polarized abelian varieties of dimension $g$. Given $\VV$ a $\ZZ$-VHS of Shimura type on $S$, there exists an algebraic classifying map $\psi: S\to \Sh_K(\Gbold, \Dcal)$ and an algebraic representation $\rho$ of $\Gbold$ such that $\VV = \psi^* \VV_\rho$. Here $\VV_\rho$ is the standard $\ZZ$-VHS on $\Sh_K(\Gbold, \Dcal)$ associated to $\rho$. Moreover the Hodge locus $\HL(S, \VV^\otimes)$ coincides with 
$\psi^{-1}(\psi(S) \cap \HL(\Sh_K(\Gbold, \Dcal))$, where the Hodge locus $\HL(\Sh_K(\Gbold,
\Dcal)):= \HL(\Sh_K(\Gbold, \Dcal), \VV_\rho^\otimes)$ is in fact independent of the choice of the faithful representation $\rho$ of $\Gbold$ and each special subvariety of $\Sh_K(\Gbold, \Dcal)$ can
be geometrically described as an irreducible component of a Hecke translate of a Shimura subvariety of $\Sh_K(\Gbold, \Dcal)$.

\subsubsection{The conjectures}  The following conjecture of Klingler proposes a characterization of the $\ZZ$-VHS with many CM points.

\begin{conjecture}[Klingler \cite{Klingler17}, Conjecture 5.3] \label{AO:conj1}
Let $\VV$ be a $\ZZ$-VHS on a smooth irreducible complex quasi-projective variety $S$ with generic Hodge datum $(\Gbold, \Dcal)$. Suppose that the set of CM points for $(S, \VV)$ is Zariski-dense in $S$. Then $(\Gbold, \Dcal)$ is a Shimura datum and we have a Cartesian diagram
\begin{equation*} \label{e2:conj1}
\begin{tikzcd}
\VV = \psi^* \VV_\rho \arrow[d] \arrow[r] & \VV_\rho \ar[d] \\
S \arrow[r, "\psi"] & \Sh_K(\Gbold, \Dcal)
\end{tikzcd}
\end{equation*}
where $\psi$ is a dominant morphism to a connected component
$\Sh_K^\circ(\Gbold, \Dcal)$ of a Shimura variety $\Sh_K(\Gbold, \Dcal)$, $\rho: \Gbold \to \GL(V)$ is an algebraic representation
and $\VV_\rho \to \Sh_K(\Gbold, \Dcal)$ is the associated standard $\ZZ$-VHS on $\Sh_K(\Gbold, \Dcal)$.
\end{conjecture}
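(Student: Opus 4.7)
The plan is to deduce the conjecture directly from the main theorem of this paper --- that the union of non-factor special subvarieties of Shimura type for $(S, \VV)$ with dominant period map is a finite union $W_1 \cup \cdots \cup W_n$ of special subvarieties of $S$ --- after a preliminary reduction to the case of $\QQ$-simple adjoint generic Mumford-Tate group $\Gbold^{\ad}$.

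I would first perform the reduction to $\Gbold^{\ad}$ being $\QQ$-simple. Writing $\Gbold^{\ad} = \prod_{i=1}^r \Gbold_i^{\ad}$ as a product of $\QQ$-simple adjoint groups, the adjoint period map splits as a product, and the CM property (the Mumford-Tate group being a torus) is preserved by each factor projection. The CM locus therefore projects to a Zariski-dense subset of the image of $S$ in each factor $\G_i\backslash \Dcal_i^{\ad}$, and induction on $r$ reduces us to $r = 1$, whereupon the absence of any proper factor decomposition makes every special subvariety of Shimura type with dominant period map automatically non-factor. In this simplified setting, each CM point $s$, viewed as a zero-dimensional special subvariety, is of Shimura type (its Mumford-Tate group is a torus, its Hodge datum is a singleton, and the corresponding period map is trivially dominant) and non-factor, so it lies in $W_1 \cup \cdots \cup W_n$ by the main theorem. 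Since this closed finite union contains the Zariski-dense set of CM points, $\bigcup_j W_j = S$; irreducibility of $S$ then forces $W_j = S$ for some $j$. Hence $S$ itself is a non-factor special subvariety of Shimura type for $(S,\VV)$ with dominant period map: $(\Gbold,\Dcal)$ is a Shimura datum, the classifying map $\psi : S \to \Sh_K^\circ(\Gbold, \Dcal)$ is dominant, and taking $\rho$ to be the Hodge representation cutting out $\VV$ yields $\VV \cong \psi^\ast \VV_\rho$, providing the required Cartesian diagram.

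The main obstacle is the reduction to the $\QQ$-simple case: proving the conjecture factor by factor and then reassembling the resulting dominant period maps into a single dominant period map targeting $\Sh_K^\circ(\Gbold, \Dcal)$ requires identifying $(\Gbold, \Dcal)$ itself --- and not merely its adjoint --- with a Shimura datum, which in turn requires controlling the isogeny between $\Gbold$ and its adjoint quotient and checking that weight and polarization data descend compatibly along this isogeny. A secondary but delicate point is to verify that zero-dimensional CM subvarieties genuinely meet the precise hypotheses of the main theorem (special, of Shimura type, non-factor, with dominant period map) so that they indeed land inside the finite union $W_1 \cup \cdots \cup W_n$ rather than being excluded by a technicality in the definition of ``dominant period map'' on a point.
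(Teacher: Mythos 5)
Your proposal contains a fundamental error: CM points are \emph{never} non-factor special subvarieties of a positive-dimensional Hodge variety, so the paper's Theorem \ref{NF:finite} says nothing about them. The paper makes this explicit. In the remark just after Definition \ref{NFdef} one reads: for a special point $x\in Y$ (with $Y$ of positive dimension) the finite morphism $\{x\}\times Y\to Y$ exhibits $\{x\}$ as a factor, ``hence special points are not non-factor special subvarieties.'' Equivalently, by Proposition \ref{centNF} a special subvariety $Z$ with Hodge datum $(\Gbold_Z,\Dcal_Z^+)$ is non-factor only if $\Zcent_\Gbold(\Gbold_Z^\text{der})(\RR)$ is compact; for a CM point $\Gbold_Z$ is a torus so $\Gbold_Z^\text{der}=\{1\}$ and this centralizer is all of $\Gbold(\RR)$, which is non-compact whenever $\Dcal^+$ is positive-dimensional. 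Your reduction to $\Gbold^{\ad}$ being $\QQ$-simple does not help: the non-factor condition concerns factorizations of the \emph{sub}-Hodge datum $(\Gbold_Z,\Dcal_Z^+)$ inside $(\Gbold,\Dcal^+)$, not factorizations of $\Gbold$ itself, and the split $\{pt\}\times Y\to Y$ is available in every positive-dimensional Hodge variety regardless of the $\QQ$-simplicity of $\Gbold^{\ad}$. The remark following Theorem \ref{NF:finite} says exactly this: ``The restriction to non-factor special subvarieties avoids in particular the appearance of the special points. We have no tools to deal with special points in the non-classical setting at the moment.''

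Beyond this, the statement you are trying to establish is labelled a \emph{conjecture} in the paper, not a theorem; the paper does not claim a proof of it. The paper proves Theorem \ref{NF:finite}, which addresses only the distribution of non-factor Shimura-type special subvarieties with dominant period maps. Even the purely geometric Conjecture \ref{AO:conj3} --- which already sidesteps CM points --- is not fully resolved by Theorem \ref{NF:finite}, because that conjecture quantifies over all positive-dimensional Shimura-type special subvarieties with dominant period maps whereas the theorem only controls the non-factor ones. So there is no way to deduce Conjecture \ref{AO:conj1} from the results in the paper: the input you would need (control over zero-dimensional special subvarieties, i.e.\ an arithmetic bound on Galois orbits of CM points in the non-classical setting) is precisely what the paper declines to attempt.
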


It follows readily from these considerations that the restriction of conjecture~\ref{AO:conj1} to the class of $\ZZ$-VHS of Shimura type is equivalent to the classical Andr\'e-Oort conjecture, while the full conjecture~\ref{AO:conj1} is equivalent to both the classical Andr\'e-Oort Conjecture and the following conjecture~\ref{AO:conj2}:

\begin{conjecture} \label{AO:conj2}
Let $\VV$ be a $\ZZ$-VHS on a smooth irreducible complex quasi-projective variety $S$. Suppose that the set of CM points for $(S, \VV)$ is Zariski-dense in $S$. Then $(S, \VV)$ is of Shimura type.
\end{conjecture}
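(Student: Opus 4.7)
The plan is to adapt the Pila-Zannier strategy for the classical Andr\'e-Oort conjecture to the period-mapping setting. First I would introduce the generic Hodge datum $(\Gbold, \Dcal)$ of $(S, \VV)$ and the period map $\varphi : S \to \Gamma\backslash\Dcal$, where $\Gamma \subset \Gbold(\ZZ)$ is the image of the monodromy. By the theorem of Cattani-Deligne-Kaplan \cite{CattaniDeligneKaplan95}, through each CM point of $S$ there passes a unique maximal special subvariety of $S$ containing it; the Zariski density hypothesis then yields a countable collection of special subvarieties whose union is Zariski-dense in $S$.

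Next I would run a functional-transcendence step. Using an Ax-Lindemann type theorem for period maps (as developed for arbitrary $\ZZ$-VHS by Bakker-Klingler-Tsimerman) together with the main finiteness theorem of this paper on non-factor special subvarieties of Shimura type with dominant period maps, I would argue that all but finitely many of the maximal special subvarieties passing through CM points must be of Shimura type, with dominant period maps onto special subvarieties of a Shimura variety. Combined with Zariski density, this would force $S$ itself to be such a special subvariety, so that $(\Gbold, \Dcal)$ is a Shimura datum; the classifying-map description of Section 1.3.1 then provides the required factorization $\VV = \psi^* \VV_\rho$ with $\psi$ dominant onto a connected component $\Sh_K^\circ(\Gbold, \Dcal)$.

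The remaining, arithmetic, half of the argument requires combining an o-minimal Pila-Wilkie count of algebraic points in a fundamental domain for $\Gamma$ on $\Dcal$, taken in the o-minimal structure $\RR_{\mathrm{an}, \mathrm{exp}}$, with a Tsimerman-type lower bound on the size of the Galois orbit of a CM point. If $(\Gbold, \Dcal)$ were not of Shimura type, the horizontal distribution carved out by Griffiths transversality (\ref{Grifftrans}) would be a proper subbundle of $T\Dcal$, and maximal horizontal subvarieties of $\Dcal$ would have strictly smaller dimension than $\Dcal$ itself --- a configuration that cannot sustain a Galois-dense system of CM points after projecting through $\varphi$. The main obstacle is precisely this arithmetic step: Tsimerman-type Galois orbit bounds are at present known essentially only in the Shimura setting (for variations of abelian origin), so in the general VHS setting a genuinely new arithmetic input is needed. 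This is why the present paper resolves only the geometric half of Conjecture \ref{AO:conj2}, namely the finiteness of non-factor special subvarieties of Shimura type with dominant period maps; once a suitable Galois orbit bound becomes available in the non-classical setting, the strategy above should complete the proof.
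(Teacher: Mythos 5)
The statement you were given is Conjecture~\ref{AO:conj2}, and the paper does \emph{not} prove it; it is left open, and the author explicitly restricts attention to its geometric shadow (Conjecture~\ref{AO:conj3}, established in the non-factor case as Theorem~\ref{NF:finite}). So there is no ``paper's proof'' to compare against, and a proposal that correctly identifies itself as a strategy sketch rather than a proof is, at this level, consistent with the paper. You diagnose the missing arithmetic ingredient --- Tsimerman-type Galois orbit lower bounds outside the Shimura setting --- accurately, and this matches the paper's own framing.

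That said, two logical points in your sketch would need repair before it could become a proof even granting a future Galois orbit bound. First, your Step~2 asserts that Ax--Lindemann plus Theorem~\ref{NF:finite} already yields that all but finitely many maximal special subvarieties through CM points are positive-dimensional, of Shimura type, and with dominant period maps. This does not follow by itself: the passage from Zariski-dense CM points to Zariski-dense \emph{positive-dimensional} special subvarieties is precisely the output of the Pila--Wilkie counting combined with the Galois lower bound, i.e.\ the arithmetic step you place third. As it stands, Steps~2 and~3 cannot be decoupled in the order you propose. Second, Theorem~\ref{NF:finite} (and Corollary~\ref{NF:finite1}) concern \emph{non-factor} special subvarieties, and CM points are not non-factor (see the remark after Definition~\ref{NFdef}). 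Any subvarieties produced from a mass of CM points would have to be shown to be non-factor before the paper's finiteness theorem applies; this requires either an extra argument or a strengthening of the geometric result to cover more general Shimura-type specials. Neither of these is an obstruction in principle, but your proposal does not yet address them.
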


Many works have been devoted to the classical Andr\'e-Oort conjecture, culminating to its proof when a Shimura variety $\Sh_K(\Gbold, X)$ is of abelian type (see for example \cite{KUY18} for a survey). The proof of the classical Andr\'e-Oort Conjecture relies on two completely different ingredients: on the one hand a precise {\em arithmetic} analysis of the Galois orbits of CM points (lower bound and heights); on the other hand, a {\em geometric} analysis of the distribution in $\Sh_K(\Gbold, X)$ of {\em positive dimensional} special subvarieties. 
\\

In this paper we will concentrate on the {\em geometric} part of conjecture~\ref{AO:conj2}, namely on the following:

\begin{conjecture}[geometric Andr\'e-Oort for $\ZZ$-VHS, Klingler \cite{Klingler17}, Conjecture 5.7]  \label{AO:conj3}
Let $\VV$ be a $\ZZ$-VHS on a smooth irreducible complex quasi-projective variety $S$. Suppose that the set of positive dimensional special subvarieties for $(S, \VV)$, which are of Shimura type with dominant period maps, is Zariski-dense in $S$. Then $(S, \VV)$ is of Shimura type with dominant period map.
\end{conjecture}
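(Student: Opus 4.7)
The plan is to derive Conjecture~\ref{AO:conj3} from the main finiteness theorem announced in the abstract---that the union of non-factor special subvarieties of $(S,\VV)$ of Shimura type with dominant period maps is a finite union of special subvarieties of $S$---via a dichotomy between non-factor and factor-type subvarieties, followed by induction on the complexity of the generic Hodge datum $(\Gbold,\Dcal)$ of $(S,\VV)$.

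\emph{Setup and dichotomy.} Let $\Sigma$ denote the set of positive-dimensional special subvarieties of $(S,\VV)$ of Shimura type with dominant period maps, and assume $\Sigma$ is Zariski-dense in $S$. Partition $\Sigma = \Sigma^{\mathrm{nf}} \cup \Sigma^{\mathrm{f}}$ into non-factor and factor-type parts. By the main theorem, the Zariski closure of $\Sigma^{\mathrm{nf}}$ is a finite union of special subvarieties of $S$; if this union equals $S$, then one of the (finitely many) maximal components must equal $S$, making $S$ itself a non-factor special subvariety of Shimura type with dominant period map, and we are done. Otherwise the factor-type family $\Sigma^{\mathrm{f}}$ is Zariski-dense in $S$.

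\emph{From factor-type density to a product decomposition.} Each $Z \in \Sigma^{\mathrm{f}}$ carries a nontrivial decomposition $(\Gbold_Z,\Dcal_Z) = (\Gbold_{Z,1},\Dcal_{Z,1}) \times (\Gbold_{Z,2},\Dcal_{Z,2})$, with $Z$ realised as a ``full'' Shimura factor over a single special point of the other component. Using that each $\Gbold_Z$ is a $\QQ$-subgroup of $\Gbold$, the rigidity of Mumford-Tate groups, and the Cattani-Deligne-Kaplan algebraicity of the Hodge locus, I would argue that Zariski-density of $\Sigma^{\mathrm{f}}$ forces a compatible nontrivial product decomposition $(\Gbold,\Dcal) = (\Gbold_1,\Dcal_1) \times (\Gbold_2,\Dcal_2)$ of the ambient generic Hodge datum (possibly after a finite \'etale cover of $S$).

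\emph{Induction and conclusion.} This decomposition splits the period map of $(S,\VV)$ into two components of strictly smaller complexity, and the hypothesis of the conjecture propagates to each factor. Induction on $\dim S$ (equivalently on the rank of $\Gbold$) yields that each factor is of Shimura type with dominant period map; reassembly gives the same for $(S,\VV)$. The base case is when $(\Gbold,\Dcal)$ admits no nontrivial $\QQ$-product decomposition, in which case no special subvariety in $\Sigma$ can be factor-type and the main theorem directly forces $S$ to be a non-factor special subvariety of Shimura type with dominant period map.

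The main obstacle will be passing from the per-$Z$ local decompositions to a genuine global product decomposition of $(\Gbold,\Dcal)$. The subtlety is that the decomposition of $\Gbold_Z$ depends on $Z$, and one must identify a common normal $\QQ$-factor of $\Gbold$ simultaneously explaining all of them. This calls for delicate arithmetic control of the family of Mumford-Tate subgroups $\{\Gbold_Z\}_{Z \in \Sigma^{\mathrm{f}}}$ as $Z$ varies through a Zariski-dense set, combined with the algebraicity/definability machinery (Cattani-Deligne-Kaplan, and plausibly Ax-Schanuel type o-minimality) needed to transfer a dense Hodge-theoretic product structure on $S$ into an actual product structure on $(\Gbold,\Dcal)$.
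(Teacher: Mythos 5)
The statement you are attempting to prove is a \emph{conjecture} (Klingler's Conjecture 5.7), not a theorem. The paper does not prove it; the paper's main result (Theorem \ref{NF:finite} and Corollary \ref{NF:finite1}) establishes only the restricted version where the given Zariski-dense family consists of \emph{non-factor} special subvarieties of Shimura type with dominant period maps. The author explicitly notes that the restriction to non-factor subvarieties is essential because, in the non-classical setting, there are currently no tools to control factor-type (and in particular zero-dimensional) special subvarieties. So anything resembling a full proof of Conjecture \ref{AO:conj3} would necessarily go beyond what the paper contains.

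Your proposal, as you partly acknowledge yourself, does not close that gap. Two concrete problems stand out. First, the claimed implication ``$\Sigma^{\mathrm f}$ Zariski-dense $\Rightarrow$ $(\Gbold,\Dcal)$ decomposes as a nontrivial product'' is not justified and is in general false. A factor-type special subvariety $W$ arises from a finite morphism $W_1\times W_2\to Y$ whose source corresponds to some Hodge \emph{subdatum} $(\Hbold,\Dcal_\Hbold)\hookrightarrow(\Gbold,\Dcal)$ with $\Hbold$ decomposable; there is no reason for such decompositions of various subgroups $\Hbold$ to glue to a $\QQ$-product decomposition of the ambient group $\Gbold$. Already in the classical Shimura setting one can take $Y=\Acal_g$ (with $\PSp_{2g}$ $\QQ$-simple): products of modular curves embed as Shimura subvarieties, producing an abundance of factor-type special subvarieties while $\Acal_g$ itself admits no product structure. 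Second, and relatedly, your base case is wrong: ``$(\Gbold,\Dcal)$ admits no nontrivial $\QQ$-product decomposition $\Rightarrow$ no $Z\in\Sigma$ can be factor-type'' fails for exactly the same reason, because factor-type is defined relative to sub-Hodge-data of $(\Gbold,\Dcal)$, not relative to $(\Gbold,\Dcal)$ itself. Consequently the induction never terminates and the argument collapses at the first step after the dichotomy. The honest state of affairs is that the paper reduces the conjecture to two open problems --- handling factor-type special subvarieties and (ultimately) special points in a non-classical Hodge variety --- and neither is addressed here.
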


\subsection{Statements of the main results} The main result we obtain in the direction of Conjecture \ref{AO:conj3} is the following:

\begin{theorem}  \label{NF:finite}
   	Let $\VV$ be a $\ZZ$-VHS on a smooth irreducible complex quasi-projective variety $S$. Then the union of the non-factor special subvarieties for $(S, \VV)$, which are of Shimura type with dominant period maps, is a finite union of special subvarieties of $S$.
\end{theorem}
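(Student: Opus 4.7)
The plan is to adapt the ergodic/equidistribution strategy of Clozel--Ullmo \cite{ClozelUllmo05} and Ullmo \cite{Ullmo07}---which handles the case where $S$ is itself a Shimura variety---to the general VHS setting, using as a bridge the algebraization and definability results of Bakker--Klingler--Tsimerman for period maps in the o-minimal structure $\RR_{\mathrm{an},\exp}$. After passing to a finite \'etale cover of $S$ if needed, I would work with the period map $\varphi\colon S\to\Gamma\backslash\Dcal$, where $(\Gbold,\Dcal)$ is the generic Hodge datum of $(S,\VV)$ and $\Gamma\subset\Gbold(\ZZ)$ is a torsion-free arithmetic lattice containing the monodromy. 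Any non-factor special subvariety $Z\subset S$ of Shimura type with dominant period map has generic Hodge datum $(\Gbold_Z,\Dcal_Z)$ with $\Dcal_Z$ Hermitian symmetric, and its period image is Zariski-dense in the weakly special subvariety $\Gamma_Z\backslash\Dcal_Z\subset\Gamma\backslash\Dcal$, with $\varphi|_Z$ generically finite onto its image. By the algebraization theorem of Bakker--Klingler--Tsimerman, irreducible components of preimages under $\varphi$ of weakly special subvarieties of $\Gamma\backslash\Dcal$ are algebraic in $S$, so the problem reduces to showing that the collection $\{\Gamma_Z\backslash\Dcal_Z\}$ arising in this way is contained in a finite union of proper weakly special subvarieties of $\Gamma\backslash\Dcal$---the alternative being that $(\Gbold,\Dcal)$ itself is a Shimura datum, in which case the statement is immediate.

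Next, to each $(\Gbold_Z,\Dcal_Z)$ I would associate the canonical $\Gbold_Z(\RR)^+$-invariant probability measure $\mu_Z$ on the closed orbit $\Gamma_Z\backslash\Gbold_Z(\RR)^+\subset\Gamma\backslash\Gbold(\RR)^+$. The Mozes--Shah theorem, a consequence of Ratner's measure classification of orbit closures of unipotent flows, ensures that any infinite subfamily of such measures admits, after passing to a subsequence, a weak-$*$ limit which is itself the canonical probability measure of a closed orbit $\Gamma_H\backslash H(\RR)^+$ for some connected reductive $\QQ$-subgroup $H\subset\Gbold$ containing infinitely many of the $\Gbold_Z$. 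The non-factor hypothesis, adapted as in the Clozel--Ullmo argument, prevents $H$ from being a proper subgroup of $\Gbold$ except in a finite list of cases determined by the lattice of reductive $\QQ$-subgroups of $\Gbold$. Thus, outside a finite union of exceptional weakly special subvarieties of $\Gamma\backslash\Dcal$, the orbits $\Gamma_Z\backslash\Dcal_Z$ equidistribute in $\Gamma\backslash\Dcal$, and at most finitely many can lie inside the constructible set $\varphi(S)$. Pulling back via $\varphi$ and using the maximality built into the definition of a special subvariety then yields the finite union of special subvarieties of $S$ claimed in the theorem.

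The main obstacle is the equidistribution step. In the classical setting of Clozel--Ullmo and Ullmo, the ambient group $\Gbold$ is already of Hermitian type, whereas here $\Gbold$ is only a general reductive $\QQ$-group and only the subgroups $\Gbold_Z$ are of Hermitian type. What salvages the argument is that the Mozes--Shah theorem requires only the measures $\mu_Z$---not the ambient space---to be attached to closed orbits of semisimple subgroups, which is exactly what the Shimura-type hypothesis on $Z$ provides. The subtler point is to verify that the \emph{non-factor} condition, interpreted inside a possibly non-Hermitian $\Gbold$, is strong enough to rule out accumulation on a proper intermediate $\QQ$-subgroup $H$; this calls for a careful combinatorial analysis of the lattice of reductive $\QQ$-subgroups of $\Gbold$ containing a given $\Gbold_Z$, paralleling but refining the original argument in \cite{ClozelUllmo05}.
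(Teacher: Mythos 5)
Your overall strategy (Clozel--Ullmo equidistribution via Mozes--Shah plus Bakker--Klingler--Tsimerman definability) is the one used in the paper. However, there is a genuine gap at the heart of the argument: you never address why the limit orbit is again a special subvariety \emph{of Shimura type} of the Hodge variety $\Gamma\backslash\Dcal$. After Mozes--Shah and Ratner produce a connected $\QQ$-subgroup $\Hbold\subset\Gbold$ of type $\Kcal$ containing $\Gbold_Z^{\mathrm{der}}$ for $n\gg 0$, the resulting orbit $\Gamma\cap\Hbold(\RR)^+\backslash\Hbold(\RR)^+\cdot h$ is a priori just a homogeneous closed subset. In the classical Shimura situation of Clozel--Ullmo and Ullmo, it is automatically a Hermitian sub-Shimura variety because the ambient $\Gbold$ is already Hermitian. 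In the general VHS case $\Gbold$ need not carry a Shimura datum, and showing that the limit is a horizontal Mumford--Tate subdomain which moreover satisfies Griffiths transversality (so that $(\Hbold_n, X_n)$ is a Shimura subdatum) is exactly the new content: one uses Griffiths' properness of the extended period map to show the limit period domain is horizontal, and one needs the non-trivial normalizer statement (the paper's Proposition~\ref{normalizer0}, that the connected center $\Tbold_n$ of $\Gbold_n$ normalizes $\Hbold$) to promote the horizontal orbit to a genuine Hodge subdatum of Shimura type. Without this, your sequence $(W_n)$ could accumulate on an orbit that is not a special subvariety of Shimura type, and the finiteness argument by maximality would break down.

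A secondary issue: you do not mention controlling the base points $h_n\in\Dcal_n^+$. To make the pushed-forward measures $\pi_{h_n,*}\mu_n$ converge, one needs $h_n$ to stay in a fixed compact set; this is where the Dani--Margulis recurrence estimate (the paper's Theorem~\ref{DM}) is used together with the non-factor condition, via the criterion that $\Gbold_Z^{\mathrm{der}}$ is contained in no proper $\QQ$-parabolic. Your proposal invokes Mozes--Shah but does not explain why the sequence of measures does not drift to infinity in the non-cocompact case. Finally, your concluding sentence about ``at most finitely many can lie inside the constructible set $\varphi(S)$'' is not the right endgame. The correct conclusion is: by Theorem~\ref{BKT}, $\psi^{-1}(W_\infty)$ has finitely many irreducible components, the $Z_n$ are eventually contained in these, and the maximality hypothesis on the $Z_n$ forces them to coincide with a finite list, contradicting their being pairwise distinct.
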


As a corollary, we have
\begin{corollary} \label{NF:finite1}
Let $\VV$ be a $\ZZ$-VHS on a smooth irreducible complex quasi-projective variety $S$. If $S$ contains a Zariski-dense subset of non-factor special subvarieties, which are of Shimura type with dominant period maps, then $(S, \VV)$ is of Shimura type with dominant period map.
\end{corollary}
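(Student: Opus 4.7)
The plan is to deduce Corollary \ref{NF:finite1} directly from Theorem \ref{NF:finite} together with the irreducibility of $S$. Let $\Fcal$ denote the family of non-factor special subvarieties for $(S,\VV)$ that are of Shimura type with dominant period maps. I would first invoke Theorem \ref{NF:finite} to conclude that $\bigcup_{Z \in \Fcal} Z$ is a finite union $Z_1 \cup \cdots \cup Z_n$ of special subvarieties of $S$. Moreover, the $Z_i$ can be taken to be the maximal elements of $\Fcal$ under inclusion, so that each $Z_i$ itself belongs to $\Fcal$ and is in particular of Shimura type with dominant period map.

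Next I would exploit the hypothesis of the corollary: by assumption $\bigcup_{Z \in \Fcal} Z$ is Zariski-dense in $S$. Being a finite union of closed subvarieties, it is already closed, and hence equals $S$. The irreducibility of $S$ then forces $Z_i = S$ for some $i$. Combining this with the previous step, such a $Z_i$ lies in $\Fcal$, so that $(S, \VV)$ itself is of Shimura type with dominant period map, which is the desired conclusion.

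The argument is short once Theorem \ref{NF:finite} is granted; the only minor subtlety to check is that the finitely many special subvarieties appearing on the right-hand side of Theorem \ref{NF:finite} may be chosen to themselves lie in $\Fcal$. This amounts to the standard observation that the maximal elements of $\Fcal$ under inclusion form a finite set whose union is $\bigcup_{Z \in \Fcal} Z$; no further Hodge-theoretic input is required for the corollary beyond what the theorem already provides.
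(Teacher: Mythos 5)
Your argument is correct in conclusion, and it is more direct than the paper's, but it hinges on one claim that does not actually follow from the statement of Theorem \ref{NF:finite} alone. You assert, as a ``standard observation,'' that the maximal elements of the family $\Fcal$ of non-factor Shimura-type special subvarieties with dominant period maps form a finite set whose union is $\bigcup_{Z\in\Fcal}Z$. The second half of this (union of maximals equals union of all) is indeed elementary: any ascending chain of irreducible closed subvarieties has bounded length by dimension reasons. But the finiteness of the set of maximal elements does \emph{not} follow from knowing that $\bigcup_{Z\in\Fcal}Z$ is a finite union of special subvarieties --- a priori, a single special subvariety $W_j$ of dimension two, say, could contain infinitely many curves that are maximal in $\Fcal$. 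This finiteness is precisely what the \emph{proof} of Theorem \ref{NF:finite} establishes (via the equidistribution theorem and BKT-definability of $\psi^{-1}(W_\infty)$), so your argument is ultimately valid, but it silently uses the content of the proof rather than the theorem as stated.

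Two ways to close the gap. One option is to invoke the theorem's proof explicitly for the finiteness of maximal elements. A cleaner alternative, using only the theorem's statement, is a Baire-category argument: the theorem shows $\bigcup_{Z\in\Fcal}Z$ is Zariski closed, hence equals $S$ by density; the family $\Fcal$ is countable (special subvarieties of $S$ are countable, by Cattani--Deligne--Kaplan together with definability of $\psi$), each $Z\in\Fcal$ is closed in $S^{\mathrm{an}}$, and a proper closed irreducible subvariety is nowhere dense; so by Baire some $Z\in\Fcal$ is all of $S$, hence $S\in\Fcal$. Your route also differs from the paper's: the paper treats the ``Shimura type'' and ``dominant period map'' conditions separately, the latter via Ullmo's Theorem~1.3 applied to the Shimura variety $\Hod^\circ_\G(S,\VV)$; your argument gets both at once from $S\in\Fcal$, avoiding the appeal to Ullmo's result and thus buying some simplicity, at the cost of the point flagged above.
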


\begin{remark}
	The notion of \textit{non-factor} special subvarieties was introduced by Ullmo in \cite{Ullmo07} for Shimura varieties, as a generalization of the \textit{strongly special subvarieties} defined by Clozel and Ullmo in \cite{ClozelUllmo05}. The precise definition is given in Section \ref{NF}. The restriction to non-factor special subvarieties avoids in particular the appearance of the special points. We have no tools to deal with special points in the non-classical setting at the moment. 
\end{remark}

\begin{remark}
Recent work of Klingler and Otwinowska \cite{KO19} shows that if the adjoint group $\Gbold^{\text{ad}}$ of the generic Mumford-Tate group $\Gbold$ of $(S, \VV)$ is simple, then the union of positive special subvarieties in $\HL(S, \VV^\otimes)$ is either an algebraic subvariety of $S$ or is Zariski-dense in $S$. Here we say an irreducible algebraic subvariety Z of $S$ is \textit{positive} if the local system $\VV |_Z$ is not constant.
\end{remark}

Theorem \ref{NF:finite} is a consequence of the following equidistribution result of non-factor Shimura type special subvarieties in any connected Hodge variety, which is a generalization to the non-classical setting of Clozel and Ullmo's \cite{ClozelUllmo05} and Ullmo's \cite{Ullmo07} result on the equidistribution of positive dimensional special subvarieties in a Shimura variety.

\begin{theorem} \label{Equidis:MT}
Let $\VV$ be a $\ZZ$-VHS on a smooth irreducible complex quasi-projective variety $S$ with generic Hodge datum $(\Gbold, \Dcal)$. Let $\psi: S\to\Hod_\G(S, \VV) := \G\bs\Dcal$ be the associated period map, where $\Gamma\subset\Gbold(\QQ)$ is an arithmetic lattice. 

Let $(Z_n)$ be a sequence of non-factor special subvariety of $S$ which are of Shimura type with dominant period maps and  $(W_n)$ be the corresponding sequence of non-factor Shimura type special subvarieties in $\Hod_\G(S, \VV)$. Let $\mu_{W_n}$ be the canonical Borel probability measure on $\Hod_\G(S, \VV)$ with support $W_n$. Then there exists a special subvariety $W_\infty$ of $\Hod_\G(S, \VV)$, which is non-factor and of Shimura type, and a subsequence $(\mu_{W_{n_k}})$ of $(\mu_{W_n})$ such that $\mu_{W_{n_k}}$ is weakly convergent to $\mu_{W_\infty}$. Moreover, $W_{n_k}\subset W_\infty$ for $k\gg0$, and the irreducible component $Z_\infty$ of $\psi^{-1}(W_\infty)$ containing $Z_{n_k}, k\gg 0$ is a non-factor special subvariety of Shimura type of $S$ and $\psi$-dominant over $W_\infty$.
\end{theorem}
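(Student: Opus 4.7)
The plan is to deduce this equidistribution statement from the measure-theoretic rigidity of unipotent flows on the homogeneous space $X:=\Gamma\backslash\Gbold(\RR)^+$, following the strategy developed by Clozel--Ullmo \cite{ClozelUllmo05} and Ullmo \cite{Ullmo07} in the Shimura setting but now carrying out the Hodge-theoretic identification of the limit datum in the non-classical framework. First I would lift the problem from $\Gamma\backslash\Dcal$ to $X$: each non-factor Shimura-type subvariety $W_n$ is the image, under the natural projection $\pi:X\to\Gamma\backslash\Dcal$, of a closed $\Hbold_n(\RR)^+$-orbit $Y_n\subset X$, where $(\Hbold_n,\Dcal_{\Hbold_n})$ is the sub-Shimura datum associated with $Z_n$. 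Correspondingly, $\mu_{W_n}=\pi_*\tilde\mu_n$ where $\tilde\mu_n$ is the canonical $\Hbold_n(\RR)^+$-invariant probability measure on $Y_n$, so that proving weak convergence of $\mu_{W_n}$ reduces to proving weak convergence of $\tilde\mu_n$ on $X$.

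Next I would invoke the Mozes--Shah theorem. The non-factor hypothesis, together with the dominance of the period maps $\psi|_{Z_n}$, ensures that the semisimple part of $\Hbold_n$ has no $\QQ$-simple compact factor acting trivially on the Hodge structure, so that $\Hbold_n(\RR)^+$ is generated by one-parameter Ad-unipotent subgroups. Mozes--Shah then yields relative compactness of $(\tilde\mu_n)$ in the weak-$*$ topology on the space of probability measures on $X$, and any limit $\tilde\mu_\infty$ is again homogeneous --- namely the canonical probability measure on a closed orbit $Y_\infty$ of a connected $\QQ$-algebraic subgroup $\Hbold_\infty\subset\Gbold$. It also supplies an asymptotic inclusion $g_k\Hbold_{n_k}(\RR)^+g_k^{-1}\subset\Hbold_\infty(\RR)^+$ with $g_k\to e$ along a suitable subsequence, which, after a basepoint adjustment, yields $Y_{n_k}\subset Y_\infty$ for $k\gg 0$, and then $W_{n_k}\subset W_\infty$ upon pushing forward to $\Gamma\backslash\Dcal$.

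The main obstacle, and the essential new input beyond the classical Shimura setting, is to verify that the limit $\Hbold_\infty$ is the generic Mumford-Tate group of a non-factor sub-Shimura datum of $(\Gbold,\Dcal)$. For Shimura-ness, I would argue that two conditions --- horizontality of the $\Hbold_\infty(\RR)^+$-orbits in $\Dcal$ along the Griffiths-transverse distribution, and the Hermitian-symmetric nature of $\Dcal_{\Hbold_\infty}=\Hbold_\infty(\RR)^+/K_{\Hbold_\infty}$ --- are each closed conditions on the real Grassmannian of connected subgroups of $\Gbold_\RR$; both are satisfied by every $\Hbold_{n_k}$, so the accumulation $\Hbold_{n_k}\to\Hbold_\infty$ provided by Mozes--Shah, combined with the $\QQ$-rationality of $\Hbold_\infty$, forces the limit to inherit both. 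The non-factor property, which excludes embeddings of $\Hbold_\infty^{\text{ad}}$ as a proper factor of $\Gbold^{\text{ad}}$, is closed of the same nature and passes to the limit likewise.

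Finally, setting $Z_\infty$ to be the irreducible component of $\psi^{-1}(W_\infty)$ containing $Z_{n_k}$ for all sufficiently large $k$, I would conclude as follows. Since $W_\infty$ is special with generic Mumford-Tate group $\Hbold_\infty$, every point of $Z_\infty\subset\psi^{-1}(W_\infty)$ has Mumford-Tate group contained in a conjugate of $\Hbold_\infty$, so $\Hbold_{Z_\infty}\subset\Hbold_\infty$; conversely, the $\psi$-dominance of each $\psi|_{Z_{n_k}}$ over $W_{n_k}$ together with the Zariski-density of $\bigcup_k W_{n_k}$ in $W_\infty$ (a consequence of the equidistribution) yields Zariski-density of $\psi(Z_\infty)$ in $W_\infty$, forcing $\Hbold_{Z_\infty}=\Hbold_\infty$. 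Hence $Z_\infty$ is a non-factor Shimura-type special subvariety of $S$ with period map dominant over $W_\infty$, completing the proof.
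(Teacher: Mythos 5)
Your proposal captures the right overall architecture (lift to $\Omega=\Gamma\backslash\Gbold(\RR)^+$, apply Mozes--Shah, identify the limit as a non-factor Shimura subdatum), which indeed matches the paper's strategy. However, two of the paper's key technical ingredients are missing or replaced by arguments that do not work, and these are precisely the places where the non-classical setting creates difficulties.

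First, you never address where the basepoints $h_n\in\Dcal_n^+$ live. The measures $\mu_{W_n}$ on $\Gamma\backslash\Dcal$ are pushforwards $\pi_{h_n,*}\mu_n$ of the homogeneous measures $\mu_n$ on $\Omega$ via $\pi_{h_n}([g])=[gh_n]$, and Mozes--Shah gives convergence of $\mu_n$ on $\Omega$ but says nothing about the $h_n$. Without a convergent choice of $h_n$, the maps $\pi_{h_n}$ need not converge and the pushforward measures need not converge at all. This is exactly what the paper's Theorem \ref{DM} (a consequence of Dani--Margulis recurrence, going through \cite{ClozelUllmo05} Lemma 4.4, \cite{EMS97} Lemma 5.1, and the non-factor Proposition \ref{centNF}) is for: it produces a fixed compact set $C$ that every $\Dcal_n^+$ must meet, so that the $h_n$ can be taken in $C$ and a convergent subsequence extracted. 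Your phrase ``after a basepoint adjustment'' hides this entirely.

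Second, the argument that $\Hbold_\infty$ yields a non-factor Shimura subdatum is a handwave that does not match what Mozes--Shah actually gives. Mozes--Shah provides the inclusion $\supp\mu_{n_k}\subset\supp\mu_\infty$ for $k\gg0$, i.e.\ $\Gbold_{n_k}^{\text{der}}(\RR)^+\subset\Hbold(\RR)^+$; it does not provide a Grassmannian-type convergence $\Hbold_{n_k}\to\Hbold_\infty$ against which one could test closed conditions. Moreover the limit group $\Hbold$ coming from Mozes--Shah is semisimple (after using \cite{EMS97} Lemma 5.1 and Proposition \ref{centNF}), but the sought Mumford--Tate group of $W_\infty$ has a center, so one must enlarge $\Hbold$. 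This requires (a) proving $\Dcal_\infty^+$ is horizontal, which the paper does via Griffiths' properness theorem for extended period maps (this is also the place where the dominance hypothesis on the $\psi|_{Z_n}$ is used, to get $W_n\subset\psi'(S')$), and (b) the genuinely new Proposition \ref{normalizer0} that the center $\Tbold_n$ of a nearby $\Gbold_n$ normalizes $\Hbold$, whose proof occupies the entire Section \ref{normalizer}. You invoke neither. Consequently your proof that the limit is Shimura is, as stated, not correct, and a closedness-in-the-Grassmannian argument does not substitute for the horizontality step and for Proposition \ref{normalizer0}.
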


\subsection{Strategy of the proof} The method we use to prove Theorem \ref{Equidis:MT} is from ergodic theory, due to Ratner (\cite{Ratner91-1}, \cite{Ratner91-2}) Mozes-Shah \cite{MozesShah95} and Dani-Margulis \cite{DaniMargulis91}. And we deduce Theorem \ref{NF:finite} from Theorem \ref{Equidis:MT} and the definability of period maps. More precisely: 

\begin{enumerate}
\item[1.] Assuming the  Hodge variety $\Hod_\G(S, \VV)$ contains one non-factor Shimura type special subvariety, then these non-factor Shimura type special subvarieties will equidistribute in $\Hod_\G(S, \VV)$. We will follow the strategies used by Clozel and Ullmo in \cite{ClozelUllmo05} and Ullmo in \cite{Ullmo07}. But there are two main differences that we want to address: 

\begin{enumerate}
\item for a non-factor special subvariety $W$ of $\Hod_\G(S, \VV)$ associated to a Hodge subdatum $(\Hbold, \Dcal_\Hbold)$ of $(\Gbold, \Dcal)$ with $\Gbold$ semisimple of adjoint type, we need to show that the centralizer $\Zcent_\Gbold(\Hbold^\text{der})(\RR)$ of $\Hbold^\text{der}$ in $\Gbold$ is contained in $M_h$, the isotropy subgroup in $\Gbold(\RR)$ of a Hodge generic point $h\in\Dcal_\Hbold$. Ullmo's method doesn't apply here since the Hodge datum $(\Gbold, \Dcal)$ is in general not a Shimura datum. We give a Hodge-theoretic proof of this result (Proposition \ref{centNF}), which works in all cases.

\item we need to show the limit of a sequence of non-factor Shimura type special subvarieties is again of Shimura type. This is quite easy in the classical case as $(\Gbold, \Dcal)$ is itself of Shimura type.
\end{enumerate}

\item[2.] We need to know that there are only finitely many components in the preimage $\psi^{-1}(W)$ of a special subvariety $W$ of $\Hod_\G(S, \VV)$ under the period map. This follows from the recent result of Bakker, Klingler and Tsimerman \cite{BKT18} on the definability of period map $\psi$.
\end{enumerate}

As for the organization of this paper, in Section \ref{ergodic} we provide a recollection of the ergodic results that we need. In Section \ref{Hodvar} we recall the general definitions of a Hodge datum and of a Hodge variety and review the definability of the period map. In Section \ref{NF}, we discuss the equidistribution of non-factor special subvarieties. Section \ref{mainresult} and \ref{normalizer} give the proof of the main results. 
\\

\textbf{\textit{Notations}}. An algebraic group will be denoted by boldface (or blackboard bold) letters (e.~g. $\SS, \Gbold, \Hbold, \cdots$) and a Lie group will be denoted by usual letters (e.~g. $G, H, \cdots$).\\

Let $\Hbold$ be an algebraic group.
\begin{itemize}
\item[--] The adjoint group and derived subgroup of $\Hbold$ are denoted by $\Hbold^\text{ad}$ and $\Hbold^\text{der}$ respectively; the centralizer (resp. normalizer) of a subgroup $\Hbold$ in an algebraic group $\Gbold$ is denoted by $\Zcent_{\Gbold}(\Hbold)$ (resp. $\Nor_{\Gbold}(\Hbold)$);

\item[--] If $\Hbold$ is defined over $\RR$, we denote $\Hbold(\RR)^+$ the identity component of $\Hbold(\RR)$ for the real topology and $\Hbold(\RR)_+$ the preimage of $\Hbold^\text{ad}(\RR)^+$ under the adjoint homomorphism $\ad: \Hbold\to\Hbold^{\text{ad}}$;

\item[--] If $\Hbold$ is connected semisimple and defined over a filed $k$, then $\Hbold$ is the almost direct product of its minimal nonfinite normal $k$-subgroups $\Hbold_1,\cdots,
\Hbold_r$ (cf. \cite{Mil17} theorem 21.51). If $\Hbold$ is adjoint or simply connected, the product is direct. By abuse of language, the $\Hbold_i$ are called $k$-\textit{simple} factors of $\Hbold$.
\end{itemize}

\bigskip

\textbf{\textit{Acknowledgement}}. I wish to record my indebtedness to my supervisor Bruno Klingler during the preparation of this paper. I had numerous discussions with him, which have influenced this paper.

%%%%%%%%%%%%%%%%%%%%%%%%%%%%%End of introduction%%%%%%%%%%%%%%

\section{Some ergodic results \`a la Ratner, Mozes and Shah}\label{ergodic}
In this section, we will review some results form ergodic theory (\cite{Ratner91-1}, \cite{Ratner91-2} and \cite{MozesShah95}) that will be used later. We follow the same terminologies as defined in \cite{ClozelUllmo05}, \cite{ClozelUllmo05-2} and \cite{Ullmo07}. 

\subsection{Algebraic groups of type $\Kcal$ and Lie subgroups of type $\Hcal$}

\begin{definition}
	A connected linear $\QQ$-algebraic group $\Hbold$ is said to be of type $\Kcal$ if its radical is unipotent, and $\Hbold^{\text{ss}} := \Hbold/\uRad(\Hbold)$ is of non-compact type: that is, none of its $\QQ$-simple factors are $\RR$-anisotropic. Here $\uRad(\Hbold)$ denotes the unipotent radical of $\Hbold$.  
\end{definition}

Let $\Gbold$ be a connected semisimple $\QQ$-algebraic group and $G = \Gbold(\RR)^+$ be the associated connected Lie group. Let $\G$ be an arithmetic lattice of $G$ and $\Omega$ be the homogeneous space $\G\bs G$ on which $G$ acts by right translations. Let $\Pscr(\Omega)$ denote the set of Borel probability measures on $\Omega$ equipped with the weak-$^*$ topology.

\begin{definition} (cf. \cite{ClozelUllmo05} Section 2).
Let $H$ be a connected closed Lie subgroup of $G$. Then $H$ is said to be of type $\Hscr$ if
    \begin{enumerate}
    \item[(i)] $H\cap\G$ is a lattice of $H$. In particular, the orbit $\G\backslash\G H$ of $\G e\in\Omega$ under $H$ is closed in $\Omega$ (\cite{Ratner91-1},  Proposition 1.4). We denote by $\mu_H\in\Pscr(\Omega)$ the unique $H$-invariant Borel probability measure supported on $\G\backslash\G H$ .

    \item[(ii)] The subgroup $L(H)\subset H$ generated by the one-parameter unipotent subgroups of $G$ contained in $H$ acts ergodically on $\G\backslash\G H$ with respect to the measure $\mu_H$.
    \end{enumerate}
\end{definition}

Note that the definition of an algebraic group being type $\Kcal$ is intrinsic, while the notion of type $\Hcal$ is for a subgroup of a given group. The relation between type $\Kcal$ algebraic subgroups of $\Gbold$ and type $\Hcal$ closed Lie subgroups of $G$ is given by the following lemma, proven as Lemmas 2.1, 2.2 and 2.3 in \cite{ClozelUllmo05}.

\begin{lemma} Let $\Gbold$ be a connected semisimple $\QQ$-algebraic subgroup of type $\Kcal$.\hfill \label{HK}
\begin{enumerate}
\item[(1)] If $\Hbold$ is a connected semisimple $\QQ$-subgroup of $\Gbold$ of type $\Kcal$, then $H := \Hbold(\RR)^+$ is a closed Lie subgroup of $G$ of type $\Hcal$.

\item[(2)] If $H$ is a connected Lie subgroup of $G$ of type $\Hcal$, then there exists a connected $\QQ$-algebraic subgroup $\Hbold$ of $\Gbold$ of type $\Kcal$ such that $H := \Hbold(\RR)^+$.
\end{enumerate}
\end{lemma}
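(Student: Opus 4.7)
Suppose $\Hbold$ is a connected semisimple $\QQ$-subgroup of $\Gbold$ of type $\Kcal$. I would verify the two defining properties of type $\Hcal$ for $H := \Hbold(\RR)^+$ separately. Property (i) follows from Borel--Harish-Chandra: since $\Hbold$ is semisimple over $\QQ$, the group $\Hbold(\ZZ)$ is a lattice in $\Hbold(\RR)$; as $\Gamma$ is an arithmetic lattice in $\Gbold$, the intersection $\Gamma\cap\Hbold(\QQ)$ is commensurable with $\Hbold(\ZZ)$, and intersecting further with the finite-index subgroup $H\subset\Hbold(\RR)$ still yields a lattice. For property (ii), the type $\Kcal$ hypothesis says that each $\QQ$-simple factor of $\Hbold$ is $\RR$-isotropic, so each contains a nontrivial $\RR$-split torus and therefore a nontrivial one-parameter unipotent subgroup over $\RR$. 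Consequently $L(H)$ contains the almost-direct product of all noncompact $\RR$-simple factors of $H$, and by Moore's ergodicity theorem this product (hence $L(H)$) acts ergodically, in fact mixingly, on $(\Gamma\cap H)\backslash H$ with respect to $\mu_H$.

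\textbf{Plan for (2).} Now assume $H\subset G$ is a connected Lie subgroup of type $\Hcal$. My plan is to produce a candidate $\Hbold$ from the dynamical data and then verify it is of type $\Kcal$. I would take $\Hbold$ to be the $\QQ$-Zariski closure in $\Gbold$ of the arithmetic lattice $\Gamma\cap H$, which is a well-defined connected $\QQ$-algebraic subgroup of $\Gbold$ since $\Gamma\cap H\subset\Gbold(\QQ)$. The equality $\Hbold(\RR)^+ = H$ would then follow from a Borel density argument: any proper algebraic $\RR$-subgroup of $H$ containing $\Gamma\cap H$ would, after intersection with $L(H)$, furnish a proper $L(H)$-invariant subvariety contradicting the ergodicity of $L(H)$ on $\Gamma\backslash\Gamma H$. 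Alternatively one may invoke Shah's theorem~\cite{MozesShah95} directly, which identifies the stabilizer of the $L(H)$-invariant measure $\mu_H$ with the $\RR$-points of a $\QQ$-algebraic subgroup.

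\textbf{Verifying type $\Kcal$ and the main obstacle.} It remains to show that $\uRad(\Hbold)$ is unipotent and that $\Hbold/\uRad(\Hbold)$ has no $\RR$-anisotropic $\QQ$-simple factor. The finiteness of the covolume of $\Gamma\cap H$ in $H$, via Borel--Harish-Chandra, forces $\Hbold$ to have no nontrivial $\QQ$-rational characters, so any torus in $\Rad(\Hbold)$ is $\QQ$-anisotropic; the ergodicity of $L(H)$ then precludes a nontrivial direct torus factor commuting with $L(H)$, forcing $\Rad(\Hbold)$ to be unipotent. Similarly, an $\RR$-anisotropic $\QQ$-simple factor of the semisimple quotient would contribute a compact factor of $H$ containing no nontrivial unipotent one-parameter subgroup; it would commute with $L(H)$ and yield a nontrivial $L(H)$-invariant $\sigma$-algebra on $(\Gamma\cap H)\backslash H$, contradicting ergodicity. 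The \emph{main obstacle} is the algebraicity step in (2): pinning down the identification $H=\Hbold(\RR)^+$ requires Borel density combined with a careful analysis of potential compact central factors (or equivalently the deeper measure-rigidity arguments of Ratner and Shah). I would follow Clozel--Ullmo's cleanly separated treatment in~\cite{ClozelUllmo05} (their Lemmas~2.1--2.3).
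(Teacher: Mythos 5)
Your proposal is correct and takes essentially the same route as the paper, which gives no proof of its own here but simply cites Lemmas 2.1--2.3 of Clozel--Ullmo \cite{ClozelUllmo05}. The argument you sketch --- Borel--Harish-Chandra and Howe--Moore for (1), and the $\QQ$-Zariski closure construction (recorded by the paper in the remark immediately following the lemma) together with the ergodicity obstruction to compact and anisotropic-torus factors for (2) --- is precisely the Clozel--Ullmo argument to which you defer at the end.
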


\begin{remark}
The $\QQ$-algebraic subgroup $\Hbold$ in part (2) of Lemma \ref{HK} is constructed as the $\QQ$-Zariski closure of $H$ in $\Gbold$.\end{remark}

\subsection{A theorem of Mozes and Shah} Let $\Qscr(\Omega)$ be the subset of $\Pscr(\Omega)$ consisting of all the $H$-invariant Borel probability measures $\mu_H$ associated to type $\Hcal$ closed connected Lie subgroups $H$ of $G$.

\begin{theorem} [Mozes-Shah \cite{MozesShah95}, Theorem 1.1 and Corollary 1.4] \label{MS95} \hfill
\begin{enumerate}
\item[(1)]  $\Qscr(\Omega)$ is a compact subset of $\Pscr(\Omega)$.
\item[(2)]  If $(\mu_n)$ is a sequence in $\Qscr(\Omega)$ that weakly converges to $\mu\in\Qscr(\Omega)$, then the supports $\supp(\mu_n)$ are contained in $\supp(\mu)$ for $n$ big enough.
\end{enumerate} 
\end{theorem}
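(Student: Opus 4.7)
The plan is to prove both parts by combining Ratner's measure classification theorem with the Dani--Margulis linearization technique; these two inputs from the dynamics of unipotent flows on homogeneous spaces are the decisive ingredients, and the main task is to couple them with the algebraic constraint that each $H_n$ is of type $\Hcal$.

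For part (1), given a sequence $\mu_n = \mu_{H_n}\in\Qscr(\Omega)$, I would first establish tightness of $(\mu_n)$ via the Dani--Margulis non-divergence estimates, applied to a one-parameter unipotent subgroup $\{u_n(t)\}\subset L(H_n)$ whose existence is guaranteed by the type $\Hcal$ hypothesis, and ultimately by Lemma \ref{HK} together with the fact that the $H_n$ come from $\QQ$-algebraic subgroups of type $\Kcal$. Prohorov's theorem then produces a subsequential weak limit $\mu\in\Pscr(\Omega)$. Passing to a further subsequence, one may assume the $H_n$ converge in the Chabauty topology to some closed subgroup $F\subset G$, and any one-parameter unipotent subgroup contained in $F$ preserves $\mu$. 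Ratner's measure classification then forces $\mu$ to be homogeneous: there is a closed connected subgroup $H\subset G$ such that $\mu = \mu_H$ and $\supp\mu = \G\bs\G H$. A final check, again using Lemma \ref{HK}, verifies that $H$ is of type $\Hcal$, so $\mu\in\Qscr(\Omega)$, proving compactness.

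For part (2), suppose $\mu_n\to\mu_H$ in $\Qscr(\Omega)$; the goal is $\G H_n\subset \G H$ for $n$ large. Here I would invoke the linearization technique: each closed orbit $\G\bs\G H$ can be cut out inside a suitable rational representation of $G$ by the vanishing of a finite set of base vectors. The fact that the mass of $\mu_n$ concentrates on $\supp\mu$ then translates into an algebraic containment statement on the one-parameter unipotent subgroups of $H_n$, and together with the Chabauty convergence of the $H_n$ this forces $H_n\subset H$ for $n\gg 0$, giving the desired inclusion of supports.

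The main obstacle is to rule out a classical pitfall: a weak limit of homogeneous measures could a priori be a translate of a homogeneous measure, producing a \emph{drift} incompatible with membership in $\Qscr(\Omega)$. Excluding this is the technical heart of the argument and requires both quantitative non-divergence estimates and the rigidity of unipotent-invariant measures furnished by Ratner's theorem. The semisimplicity of $\Gbold$ and its type $\Kcal$ hypothesis (in particular the arithmeticity of $\G$) enter essentially here, since they provide the algebraic structure on closed orbits without which the linearization step collapses.
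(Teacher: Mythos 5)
The statement you are asked to prove is, in the paper, a pure citation: the author invokes Mozes--Shah \cite{MozesShah95} (Theorem 1.1 and Corollary 1.4) as a black box and gives no proof. Your proposal is therefore not being compared against any argument of the paper's; it is an attempt to reconstruct a deep theorem in homogeneous dynamics from scratch. That said, the toolkit you name (Ratner's measure classification, Dani--Margulis non-divergence, and the linearization technique, together with Lemma~\ref{HK} to pass between type~$\Kcal$ $\QQ$-groups and type~$\Hcal$ real groups) is indeed the correct one, and your identification of non-divergence via arithmeticity as what makes $\Qscr(\Omega)$ compact (rather than merely precompact in $\Pscr(\Omega)\cup\{0\}$) is the right observation about why Mozes--Shah's general statement specializes as claimed here.

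There are, however, two genuine gaps. First, your use of the Chabauty limit $F$ of the $H_n$ is misleading: you assert that any one-parameter unipotent subgroup of $F$ preserves $\mu$, and implicitly that the group $H$ with $\mu=\mu_H$ is controlled by $F$. But the whole point of the Mozes--Shah phenomenon is that the group generating the limit measure can be much larger than any Chabauty limit of the $H_n$ (a sequence of closed orbits of a fixed small group can equidistribute to all of $\Omega$, where $H=G$). The one-parameter unipotent subgroups that one shows preserve $\mu$ are extracted from a more careful limiting argument on renormalized unipotents inside the $L(H_n)$, not from $F$. Second, and more seriously, you skip the central difficulty: Ratner's theorem classifies \emph{ergodic} unipotent-invariant measures, but a weak limit $\mu$ of the $\mu_n$ is a priori only unipotent-invariant, hence a priori an \emph{integral} over homogeneous ergodic components rather than a single $\mu_H$. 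Proving that $\mu$ is in fact ergodic (equivalently, that the ergodic decomposition collapses to a point) is precisely where the linearization technique does its real work in Mozes--Shah, and your sketch does not engage with it. As written, the argument would at best produce a convex combination of measures of type $\Hcal$, not membership in $\Qscr(\Omega)$.
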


%%%%%%%%%%%%%%%%%%%%%%%%%%End of Section2%%%%%%%%%%%%%%%%%%%%%%%%%%%

\section{Hodge varieties and definability of period maps}\label{Hodvar}
In this section we recall the definition of special subvarieties of a Hodge variety and give a brief review of the definability of period maps. The main references for this section are \cite{GGK}, \cite{CMP17},  \cite{Klingler17}, and \cite{BKT18}.

\subsection{Hodge data and Hodge varieties}
Let $\SS:=\Res_{\CC/\RR}\GG_{m,\CC}$ denote the Deligne torus. It is the Tannaka dual group of the category of real Hodge structures. The inclusion $\RR^\times\hookrightarrow\CC^\times$ corresponds to an inclusion of real algebraic groups $w: \GG_{m, \RR}\hookrightarrow\SS$. 

\begin{definition}\hfill

\begin{enumerate}
\item[(i)] A \textit{Hodge datum} is a pair $(\Gbold, \Dcal)$ consisting of a connected $\QQ$-reductive group and a $\Gbold(\RR)$-conjugacy class $\Dcal$ of some homomorphism $h\in\Hom(\SS,\Gbold_\RR)$ satisfying the following conditions:
\begin{enumerate}
\item[HD~$0$:] the \textit{weight homomorphism} $w_h := h\circ w: \GG_{m, \RR}\to\Gbold_\RR$ is a cocharacter of the center of $\Gbold_\RR$ and is defined over $\QQ$;
\item[HD~$1$:] the involution $\Int(h(\sqrt{-1}))$ is a Cartan involution of the adjoint group $\Gbold^{ad}_\RR$.
\end{enumerate}
\item[(ii)] Let $(\Gbold, \Dcal)$ be a Hodge datum and $\Dcal^+$ be a connected component of $\Dcal$. The pair $(\Gbold, \Dcal^+)$ is then called a \textit{connected Hodge datum}.
\item[(iii)] A (connected) Hodge datum $(\Gbold, \Dcal)$ (resp. $(\Gbold,\Dcal^+)$) is said to be of \textit{Shimura type} if it satisfies two more conditions:
\begin{enumerate}
\item[HD~$2$:] the Hodge structure induced on the Lie algebra $\Lie(\Gbold_\RR)$ by $\Ad\circ h$ is of type
$$\{(-1, 1), (0, 0), (1, -1)\}.$$
\end{enumerate}
%\item[(iv)] A (connected) Shimura datum $(\Gbold, \Dcal)$ is a (connected) Hodge datum, which is of Shimura type and satisfies a further condition: 
\begin{enumerate}
\item[HD~$3$:] $\Gbold^{\text{ad}}$ has no $\QQ$-factor on which the projection of $h$ is trivial. By the presence of axioms HD $1$ and HD $2$, this is equivalent to say that $\Gbold^{\text{ad}}$ is of non-compact type.
\end{enumerate}
\end{enumerate}
\end{definition}

\begin{remark}\hfill
\begin{itemize}
	\item[(1)] For $(\Gbold, \Dcal)$ a Hodge datum, there exists a unique structure of complex manifold on $\Dcal$ such that for some (any) faithful (finite dimensional, algebraic) representation of $\Gbold$, the associated family of Hodge structures on $\Dcal$ varies holomorphically (cf. \cite{Mil05} Theorem 2.14).
	\item[(2)] Any discrete subgroup $\G$ of $\Gbold(\QQ)_+ := \Gbold(\QQ)\cap\Gbold(\RR)_+$\footnote{Recall that $\Gbold(\RR)_+$ is the stabilizer of $\Dcal^+$ in $\Gbold(\RR).$} acts properly discontinuously on $\Dcal^+$, so that $\G\bs\Dcal^+$ is a complex
          analytic space with at most finite quotient singularities (cf. \cite{CMP17} Section 16.3).

\end{itemize}
	
\end{remark}

\begin{definition}\hfill
\begin{enumerate}
\item[(i)] Let $(\Gbold, \Dcal)$ be a Hodge datum and $K$ be a compact open subgroup of $\Gbold(\AA_f)$ where $\AA_f$ is the ring of finite adèles of $\QQ$. The \textit{Hodge variety} is defined as
$$\Hod_K(\Gbold, \Dcal) := \Gbold(\QQ)\bs \Dcal\times\Gbold(\AA_f)/K,$$
where $\Gbold(\QQ)$ acts diagonally on $\Dcal$ and $\Gbold(\AA_f)$ on the left and $K$ acts on $\Gbold(\AA_f)$ on the right.
\item[(ii)] Let $(\Gbold, \Dcal^+)$ be a connected Hodge datum. A \textit{connected Hodge variety} associated to $(\Gbold, \Dcal^+)$ is defined as the quotient $\G\bs\Dcal^+$ for an arithmetic subgroup $\G$ of $\Gbold(\QQ)_+$.
\end{enumerate}
\end{definition}

\begin{remark}\hfill
\begin{enumerate}
\item[(1)] As in the case of Shimura varieties, every connected Hodge variety is a connected component of a Hodge variety and vice versa (cf. \cite{CMP17} Lemma 16.3.8). If $K$ (resp. $\G$) is chosen sufficiently small, then $\Hod_K(\Gbold, \Dcal)$ (resp. $\G\bs\Dcal^+$) is a complex manifold and the map $\Dcal\to\Hod_K(\Gbold, \Dcal)$ (resp. $\Dcal^+\to\G\bs\Dcal^+)$ is unramified.
\item[(2)] In general, the Hodge variety $\Hod_K(\Gbold, \Dcal)$ (resp. connected Hodge variety $\G\bs\Dcal^+$) does not admit any algebraic structure (see \cite{GRT14} Theorem 1.4).
\end{enumerate}
\end{remark}

We will only consider connected Hodge data and connected Hodge varieties in this paper.

\begin{definition}
A \textit{Hodge morphism} of connected Hodge data $(\Gbold, \Dcal^+)\to(\Gbold^\prime, \Dcal^{\prime +})$ is a homomorphism of $\QQ$-algebraic groups $\varphi: \Gbold\to\Gbold^\prime$ which induces a map $\Dcal^+\to\Dcal^{\prime +}, h\mapsto\ \varphi\circ h$. A \textit{Hodge morphism} of connected Hodge varieties is a morphism of varieties induced by a morphism of connected Hodge data.
\end{definition}

\begin{remark}\label{adjoint1}
Let $h\in\Dcal^+$ and let $\Dcal^{\text{ad}, +}$ be the $\Gbold^{\text{ad}}(\RR)^+$-conjugacy class of the composition $h^\text{ad}: \SS\to\Gbold_\RR\to\Gbold^\text{ad}_\RR$. Then $\Dcal^+\cong\Dcal^{\text{ad}, +}$ and we have a morphism of connected Hodge data $(\Gbold, \Dcal^+)\to(\Gbold^{\text{ad}}, \Dcal^{\text{ad}, +})$.
\end{remark}

\subsection{Special subvarieties of a connected Hodge variety} Let $(\Gbold, \Dcal^+)$ be a connected Hodge datum and let $Y$ be a connected Hodge variety associated to $(\Gbold, \Dcal^+)$.
\begin{definition}\label{Special1}
The image of any Hodge morphism $W\to Y$ between connected Hodge varieties is called a \textit{special subvariety} of $Y$. It is said to be of \textit{Shimura type} if the connected Hodge datum corresponds to $W$ is a Shimura datum.
\end{definition}

For any special subvariety of $Y$, the Hodge morphism in Definition \ref{Special1} can be chosen such that the underlying homomorphism of algebraic groups is injective. Hence any special subvariety of $Y$ can be regarded as given by a Hodge subdatum.

\subsection{Special subvarieties associated to a $\ZZ$-VHS}

Let $\VV$ be a $\ZZ$-VHS on a smooth irreducible complex quasi-projective variety $S$. Let $\Gbold$ be its generic Mumford-Tate group. Fix a Hodge generic point $o\in S$. The Hodge structure on the fiber $\VV_{\QQ, o}\cong V$ \footnote{The pullback of the local system $\VV_\QQ$ to the topological universal cover $\hat{S}$ of $S$ is constant, hence isomorphic to $\hat{S}\times V$ for some finite dimensional $\QQ$-vector space $V$.} induces a morphism of $\RR$-algebraic groups $h_o: \SS\to\Gbold_\RR$. Let $\Dcal$ be the $\Gbold(\RR)$-conjugacy class of $h_o$ and let $\Dcal^+$ be a connected component of $\Dcal$ containing $h_o$. Then we get a connected Hodge datum $(\Gbold, \Dcal^+)$.\\

Let $\G$ be a neat arithmetic lattice of $\Gbold(\RR)_+$, the stabilizer of $\Dcal^+$ in $\Gbold(\RR)$. After passing to a finite \'etale covering of $S$, we may assume that $\G$ contains the monodromy group, namely the image of $\pi_1(S)$ in $\GL(\VV_{\ZZ, o})$. We denote by $\Hod^\circ_\G(S, \VV)$ the connected Hodge variety $\G\bs\Dcal^+$ associated to $(S, \VV)$. This is an arithmetic quotient $\G\bs\Gbold(\RR)_+/M_o$ in the sense of \cite{BKT18}. Here $M_o$ is the intersection of the isotropy subgroup of $h_o$ in $\Gbold(\RR)$ with $\Gbold(\RR)_+$, whose image in $\Gbold^{ad}(\RR)^+$ turns out to be compact. And we have the period map:
$$\psi: S\to\Hod^\circ_\G(S, \VV),$$
which is holomorphic, locally liftable and all the local liftings are horizontal.\\

Let $K = \Zcent_{\Gbold(\RR)}(h_o(\sqrt{-1}))\cap\Gbold(\RR)_+$.  Then $M_o\subset K$. And we have a canonical projection
$$\omega: \Dcal^+ = \Gbold(\RR)_+/M_o\longrightarrow\Gbold(\RR)_+/K.$$
Let $\gfrak := \Lie(\Gbold_\RR), \kfrak := \Lie(K)$, and $\mfrak := \Lie(M_o)$. Then $\gfrak$ carries a weight $0$ Hodge structure
$$\gfrak_\CC = \bigoplus\gfrak^{-j, j}$$
polarized by minus the Killing form of $\gfrak$. And by the axiom (HD 1), we have a Cartan decomposition 
$$\gfrak = \kfrak\oplus\pfrak.$$
Let $T_\omega$ (resp. $T_{\omega}^{\perp}$) be the subbundle of the tangent bundle $T_{\Dcal^+}$ of $\Dcal^+$ associated to the adjoint representation of $M_o$ on $\kfrak/\mfrak$ (resp. $\pfrak$). We then have a canonical splitting
$$T_{\Dcal^+} = T_\omega\oplus T_{\omega}^{\perp}.$$
The subbundle $T_\omega$ is holomorphic as the fibers of $\omega$ are complex submanifolds of $\Dcal^+$, while $T_{\omega}^{\perp}$ in general admits no complex structure. However, there is a holomorphic subbundle $T_{\Dcal^+}^h$contained in the complexification $T_{\omega}^{\perp}\otimes\CC$, namely the subbundle associated to the adjoint representation of $M_o$ on $g^{-1, 1}$ and we call it the holomorphic horizontal tangent bundle. When we say the period map $\psi$ is horizontal, we mean
$$d\hat{\psi}(T_{\hat{S}})\subset \hat{\psi}^*T_{\Dcal^+}^h,$$
where $\hat{S}$ is the topological universal cover of $S$ and $\hat{\psi}$ is the lifting of $\psi$ to $\hat{S}$. 

Given an irreducible algebraic subvariety $Z$ of $S$, let $\Ztilde\to Z$ be its normalization and $\Ztilde^{\text{sm}}$ be the smooth locus of $\Ztilde$. Let $u: \Ztilde^{\text{sm}}\hookrightarrow\Ztilde\to Z\hookrightarrow S$ be the composition.  Then the local system $u^*\VV$ on $\Ztilde^{\text{sm}}$ is a $\ZZ$-VHS, and we denote its generic Mumford-Tate group by $\Gbold_Z$. Let 
$$\psitilde_Z: \Ztilde^{\text{sm}}\to\Hod^\circ_{\G_Z}(\Ztilde^{\text{sm}}, u^*\VV) = \G_Z\bs\Dcal_Z^+$$ 
be the associated period map, where $\G_Z = \G\cap\Gbold_Z(\QQ)$, we then have a commutative diagram
\begin{equation*} \label{Specialclosure}
\begin{tikzcd}
\Ztilde^{\text{sm}} \arrow[d, "u"] \arrow[r, "\psitilde_Z"] & \Hod^\circ_{\G_Z}(\Ztilde^{\text{sm}}, u^*\VV) \arrow[d, "\iota_Z"] \\
S \arrow[r, "\psi"] & \Hod^\circ_\G(S, \VV).
\end{tikzcd}
\end{equation*}

Notice that the restriction of the period map $\psi$ to the smooth locus of $Z$ factors through the special subvariety $\im(\iota_Z)$ of $\Hod^\circ_\G(S, \VV)$ and every complex analytic irreducible component of the preimage $\psi^{-1}(\im(\iota_Z))$ is a special subvariety of $S$ for $\VV$. Conversely, if $Z$ is a special subvariety for $(S, \VV)$, then it follows readily from the Definition \ref{special:def} that $Z$ is a  
complex analytic irreducible component of the preimage $\psi^{-1}(\im(\iota_Z))$. We thus prove the following lemma (the last assertion is obvious):

\begin{lemma} The special subvarieties for $(S,\VV)$ are precisely the preimages of the special subvarieties for $\Hod^\circ_\G(S, \VV)$. Moreover, the preimages of special points \footnote{Zero-dimensional special subvarieties (namely, special points) for a Hodge variety are precisely the CM points (cf. \cite{CMP17} examples 16.3.7)} in $\Hod^\circ_\G(S, \VV)$ are CM points for $(S, \VV)$.
\end{lemma}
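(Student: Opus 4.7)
I would prove the lemma by combining the commutative diagram constructed just above the statement with the maximality clause of Definition \ref{special:def}, and then handle the zero-dimensional case by a short Mumford--Tate group argument.

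For the first direction, let $Z\subset S$ be a special subvariety for $(S,\VV)$. The inclusion of generic Hodge data $(\Gbold_Z,\Dcal_Z^+)\hookrightarrow(\Gbold,\Dcal^+)$ induces the Hodge morphism $\iota_Z$, so $\im(\iota_Z)$ is a special subvariety of $\Hod^\circ_\G(S,\VV)$. The diagram shows $\psi(Z^{\textnormal{sm}})\subset\im(\iota_Z)$, hence $Z$ is contained in some complex analytic irreducible component $Z'$ of $\psi^{-1}(\im(\iota_Z))$. Every point $s\in Z'$ has Hodge structure conjugate to one in $\Dcal_Z^+$, so its Mumford--Tate group lies in $\Gbold_Z$; thus the generic Mumford--Tate group of $Z'$ satisfies $\Gbold_{Z'}\subset\Gbold_Z$, and the reverse inclusion is immediate from $Z\subset Z'$. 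The maximality of $Z$ in Definition \ref{special:def} then forces $Z=Z'$.

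Conversely, let $W$ be a special subvariety of $\Hod^\circ_\G(S,\VV)$, given by a Hodge subdatum $(\Hbold,\Dcal_\Hbold^+)\hookrightarrow(\Gbold,\Dcal^+)$. I first want to show that every complex analytic irreducible component $Z$ of $\psi^{-1}(W)$ is an algebraic subvariety of $S$. This is the step that is not formal: I would invoke the o-minimal definability of the period map from \cite{BKT18}, which makes $\psi^{-1}(W)$ a closed definable analytic subset of $S$, and then apply the definable Chow theorem to conclude that its irreducible components are algebraic. Once that is in hand, the inclusion $\psi(Z^{\textnormal{sm}})\subset W$ yields $\Gbold_Z\subset\Hbold$, and the first direction applied to the special closure of $Z$ shows that $Z$ is itself a special subvariety of $(S,\VV)$.

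For the last assertion, let $\{y\}\subset\Hod^\circ_\G(S,\VV)$ be a zero-dimensional special subvariety given by a Hodge subdatum $(\Hbold,\{h\})$. Since $\Hbold(\RR)$ stabilizes $h$, the image $h(\SS)$ lies in $\Zcent(\Hbold)(\RR)$, so $\Zcent(\Hbold)$ is a $\QQ$-algebraic subgroup of $\GL(V)$ through which $h$ factors; by minimality of the Mumford--Tate group one gets $\Hbold=\Zcent(\Hbold)$, so $\Hbold$ is a torus. Every $s\in\psi^{-1}(y)$ therefore has commutative Mumford--Tate group, hence is a CM point for $(S,\VV)$.

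\textbf{Main obstacle.} The only nontrivial ingredient is the algebraicity of the analytic components of $\psi^{-1}(W)$ in the converse direction: Hodge theory alone does not produce this, and it is precisely where one appeals to the recent o-minimal definabilization of period maps of Bakker--Klingler--Tsimerman. The remaining manipulations are bookkeeping with the generic Mumford--Tate group and the maximality clause of Definition \ref{special:def}.
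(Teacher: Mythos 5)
Your proposal is correct and takes essentially the same approach as the paper's (very terse) argument: both directions reduce to the commutative diagram just above the statement together with the maximality clause in Definition 1.2, and both require as nontrivial input the algebraicity of the analytic components of $\psi^{-1}(W)$, which you make explicit via the Bakker--Klingler--Tsimerman definability theorem (the paper defers this to its Theorem 3.11 a few lines later and simply asserts the components are special). Your centralizer argument for the CM-point clause is a correct expansion of what the paper dismisses as "obvious."
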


\begin{remark} 
It can be shown that the set of CM points is dense in $\Hod^\circ_\G(S, \VV)$ (cf. \cite{CMP17} Corollary 17.1.5). However, there is no guarantee for the image of the period map $\psi(S)$ to contain even one CM point. 
\end{remark}

\begin{definition}
	Let $Z$ be an irreducible algebraic subvariety of $S$. The \textit{algebraic monodromy group} $\Hbold_Z$ of $Z$ for $\VV$ is defined to be the Zariski closure in $\GL(V)$ of the monodromy group of the local system $u^*\VV_\ZZ$ on $\Ztilde^{\text{sm}}$.\end{definition}
 
\subsection{Definability of period maps and algebraicity of special subvarieties} Although the period map $\psi$ is transcendental, Bakker, Klingler and Tsimerman \cite{BKT18} showed that it has moderate geometry in the sense of tame topology. For a reference to the notions of tame topology and definability in some o-minimal structure (for instance $\RR_{\rm alg}, \RR_{\rm an}, \RR_{\rm an, exp}, \cdots$), see \cite{vdD98}. 

\begin{theorem}[Bakker, Klingler and Tsimerman]\label{BKT}\hfill
\begin{enumerate}
\item[(1)] There is a natural $\RR_{\rm alg}$-definable manifold structure on the connected Hodge variety $\Hod^\circ_\G(S, \VV)$.
\item[(2)] With respect to the $\RR_{\rm an, exp}$-definable manifold structure extending the $\RR_{\rm alg}$-definable manifold structure on $S$ (resp. on $\Hod^\circ_\G(S, \VV)$) coming from its complex algebraic structure (resp. defined in part (1)), the period map $\psi: S\to\Hod^\circ_\G(S, \VV)$ is $\RR_{\rm an, exp}$-definable.
\item[(3)] For any special subvariety $Y$ of $\Hod^\circ_\G(S, \VV)$, the preimage $\psi^{-1}(Y)$ is an algebraic subvariety of $S$. In particular, $\psi^{-1}(Y)$ has only finitely many irreducible components.
\end{enumerate}
\end{theorem}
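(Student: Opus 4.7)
The overall strategy is to build an o-minimal framework in which $\Hod^\circ_\G(S,\VV)$ and $\psi$ become tame objects, and then to conclude with a definable Chow-type theorem. For part (1), I would exploit the fact that the period domain $\Dcal^+$ is canonically an open subset of its compact dual $\check{\Dcal}$, cut out inside $\check{\Dcal}$ by the open Hodge--Riemann inequalities; since $\check{\Dcal}$ is a complex flag variety of $\Gbold_\CC$ and hence a complex projective variety, it carries an evident $\RR_{\rm alg}$-structure, which restricts to one on $\Dcal^+$. To descend this to the quotient $\G\bs\Dcal^+$, the plan is to produce an $\RR_{\rm alg}$-definable fundamental set via reduction theory for the arithmetic subgroup $\G\subset\Gbold(\QQ)_+$ acting on $\Dcal^+=\Gbold(\RR)_+/M_o$: using an Iwasawa decomposition $\Gbold(\RR)_+ = PAK$, Siegel sets descend to a finite union of semi-algebraic pieces covering $\Dcal^+$, and only finitely many elements of $\G$ are needed to identify their overlaps, so the quotient inherits a well-defined $\RR_{\rm alg}$-definable manifold structure.

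Part (2) is the main technical step. I would work locally on a smooth projective compactification $\bar S$ of $S$ with simple normal crossings boundary $D = \bar S\setminus S$. On a polydisc neighborhood $(\Delta^*)^k\times\Delta^{n-k}$ around a boundary point, Schmid's nilpotent orbit theorem produces a local lifting of the form
\[
\tilde\psi(z) \;=\; \exp\!\Bigl(\sum_{i=1}^k \tfrac{\log z_i}{2\pi \sqrt{-1}}\,N_i\Bigr)\cdot F(z),
\]
where the $N_i$ are commuting nilpotent monodromy logarithms and $F\colon \Delta^n\to \check{\Dcal}$ is holomorphic. In this normal form $\tilde\psi$ is manifestly $\RR_{\rm an,exp}$-definable on any subset on which $\Im(\log z_i)$ lies in a bounded strip, which can be arranged after applying a unipotent element of $\G$ to land in the definable fundamental set of part (1). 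The delicate point is that the nilpotent orbit is only an asymptotic approximation: one must bound the error uniformly and glue finitely many local models at the various boundary strata into a single globally $\RR_{\rm an,exp}$-definable map, for which I would invoke Schmid's sharper $\SL_2$-orbit theorem to control the behaviour where several $N_i$ interact.

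Once (1) and (2) are in place, part (3) is largely formal. Any special subvariety $Y$ of $\Hod^\circ_\G(S,\VV)$ is a closed complex analytic subvariety which is $\RR_{\rm alg}$-definable, since it is the image under the quotient of an algebraic subvariety of $\check{\Dcal}$ intersected with $\Dcal^+$ (and it is enough to argue on a definable fundamental set). Its preimage $\psi^{-1}(Y)\subset S$ is then a closed complex analytic subset of a complex algebraic variety which is also $\RR_{\rm an,exp}$-definable, by composition of definable maps. The definable Chow theorem of Peterzil--Starchenko then forces $\psi^{-1}(Y)$ to be a complex algebraic subvariety of $S$, in particular with only finitely many irreducible components.

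The main obstacle is entirely in part (2): converting Schmid's asymptotic estimates, which a priori give only formal control modulo rapidly decaying errors, into a uniform o-minimal definability statement that is stable under passage to higher-codimension boundary strata and compatible with the definable fundamental domain constructed in part (1). Parts (1) and (3) will then follow, the former from standard reduction theory plus the projectivity of $\check{\Dcal}$, and the latter as a direct application of definable GAGA.
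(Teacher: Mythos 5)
The paper does not reprove this statement: its ``proof'' of Theorem~\ref{BKT} consists solely of citations, identifying parts (1), (2), (3) with Theorems~1.1(1), 1.3, and~1.6 of \cite{BKT18} respectively. Your sketch, by contrast, reconstructs the actual argument of Bakker--Klingler--Tsimerman, and it does so accurately: the $\RR_{\rm alg}$-structure on $\Hod^\circ_\G(S,\VV)$ via the compact dual $\check{\Dcal}$ and Siegel-set fundamental domains; the definability of the period map through Schmid's nilpotent orbit and $\SL_2$-orbit theorems applied on polydiscs around the boundary strata of a log-smooth compactification (with the uniform control of error terms across strata being, as you note, the genuine technical difficulty); and finally the Peterzil--Starchenko definable Chow theorem to upgrade the $\RR_{\rm an,exp}$-definable complex-analytic set $\psi^{-1}(Y)$ to an algebraic subvariety of $S$. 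The one point worth flagging in part (3) is that you need $Y$ itself to be definable in the structure constructed in part (1); this holds because a special subvariety is the image of a Mumford--Tate subdomain $\Dcal_Y^+$, which is a real-group orbit and hence semi-algebraic in $\check{\Dcal}$, and the quotient map restricted to a Siegel fundamental set is definable---a step you allude to but that deserves to be made explicit. Modulo that, your outline is a correct and essentially complete account of the proof that the paper relegates to the reference.
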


\begin{proof}
Part (1) is Theorem 1.1 (1) in \cite{BKT18}, part (2) is Theorem 1.3 in \cite{BKT18} and part (3) is Theorem 1.6 in \cite{BKT18} .
\end{proof}

\subsection{The structure theorem for period maps} For later use, we need a structure theorem for period maps.
\\

Let $\Hbold$ be the algebraic monodromy group of $S$ for $\VV$. It follows from \cite{An92} Theorem 5.1 that $\Hbold$ is a normal subgroup of derived subgroup $\Gbold^{\text{der}}$ of the generic Mumford-Tate group $\Gbold$. As $\Gbold^{\text{der}}$ is semisimple, there exists a normal subgroup $\Fbold$ of $\Gbold^{\text{der}}$ such that $\Gbold^{\text{der}}$ is an almost direct product of $\Hbold$ and $\Fbold$. Let $\Hbold^{\text{nc}}$ and $\Hbold^{\text{c}}$ be the non-compact and compact part of $\Hbold$ respectively. Then we will have an isogeny of $\QQ$-reductive groups:
$$\Hbold^{\text{nc}} \times \Hbold^{\text{c}} \times \Fbold\to \Gbold^{\text{der}}$$
which induces a surjective holomorphic map with finite fibers between Mumford-Tate domains:
$$\Dcal^+_{\Hbold^{\text{nc}}} \times \Dcal^+_{\Hbold^{\text{c}}} \times \Dcal^+_\Fbold\to \Dcal^+.$$
Here for a $\QQ$-algebraic subgroup $\Gbold^\prime$ of $\Gbold$, we write $\Dcal_{\Gbold^\prime}$ for the $\Gbold^\prime(\RR)$-orbit in $\Dcal$ of a fixed lifting in $\Dcal$ of the image $\psi(o)\in\Hod^\circ_\G(S, \VV)$ and $\Dcal^+_{\Gbold^\prime}$ the connected component of $\Dcal_{\Gbold^\prime}$ containing the lifting.

\begin{theorem} \label{StrPer}
Let $\VV$ be a $\ZZ$-VHS on a smooth irreducible complex quasi-projective variety $S$. Then its associated period map $\psi: S\to\Hod^\circ_\G(S, \VV)$ factors as:
\begin{equation*}
\psi = (\psi_{\text{nc}}, \psi_{\text{c}}, \psi_f): S\longrightarrow \G^{\text{nc}}\bs\Dcal^+_{\Hbold^{\text{nc}}} \times \G^{\text{c}}\bs \Dcal^+_{\Hbold^{\text{c}}} \times \Dcal^+_\Fbold,
\end{equation*}
where $\G^{\text{nc}} := \G\cap\Hbold^{\text{nc}}(\QQ)$ and $\G^{\text{c}} := \G\cap\Hbold^{\text{c}}(\QQ)$. Moreover,
\begin{enumerate}
\item[(1)] \label{trivial} the component $\psi_f$ is constant; correspondingly, the $\ZZ$-VHS $\VV$ is a direct sum of a sub-VHS whose generic Mumford-Tate group is the whole group $\Gbold$ and a(n) (iso-)trivial one. Let $e_3$ be image of $S$ under $\psi_f$.
\item[(2)] for any point $x\in\Dcal^+_{\Hbold^{\text{c}}}\subset\Dcal^+$, we have $T_{\Dcal^+_{\Hbold^{\text{c}}},x}\subset T_{\omega,x}$. As a consequence, for any $e_1\in \psi_{\text{nc}}(S)$, the image $\psi(S)$ intersects $e_1\times \G^{\text{c}}\bs \Dcal^+_{\Hbold^{\text{c}}} \times e_3$ in finitely many point.
\item[(3)] the number of points of the intersections of $\psi(S)$ with $e_1\times \G^{\text{c}}\bs \Dcal^+_{\Hbold^{\text{c}}} \times e_3$ is uniformly bounded as $e_1$ varies in $\psi_{\text{nc}}(S)$.
\end{enumerate}
\end{theorem}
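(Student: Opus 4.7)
\textbf{Plan for Theorem \ref{StrPer}.} The strategy relies on Andr\'e's theorem (\cite{An92}, Theorem 5.1) that the algebraic monodromy $\Hbold$ is normal in $\Gbold^\text{der}$. Combined with the isogeny $\Hbold^\text{nc}\times\Hbold^\text{c}\times\Fbold\to\Gbold^\text{der}$, it induces a finite covering $\Dcal^+_{\Hbold^\text{nc}}\times\Dcal^+_{\Hbold^\text{c}}\times\Dcal^+_\Fbold\to\Dcal^+$ along which I would analyse $\psi$ factorwise. For (1), lifting $\psi$ to the monodromy-equivariant horizontal map $\hat\psi:\tilde S\to\Dcal^+$ and using that the monodromy is contained in $\Hbold(\QQ)$ (so that the $\Fbold$-projection of the monodromy is trivial), standard Hodge-theoretic arguments show that $\hat\psi(\tilde S)$ is contained in a single $\Fbold$-fiber of the covering, whence $\psi_f$ is constant. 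The accompanying VHS splitting comes from the normality of $\Hbold$ in $\Gbold$ (deduced from normality in $\Gbold^\text{der}$ and centrality of the complement in $\Gbold$): the subspace $V^\Hbold\subset V$ is $\Gbold$-stable, so by reductivity $V=V_1\oplus V^\Hbold$ as $\Gbold$-representations; the sub-VHS cut out by $V^\Hbold$ is iso-trivial (trivial monodromy) while $V_1$ inherits the full generic Mumford--Tate group $\Gbold$.

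The Hodge-theoretic crux is (2), whose key assertion is $\Lie(\Hbold^\text{c})_\RR\subset\kfrak$. Since $\Hbold^\text{c}$ is characteristic in $\Hbold$ (being the product of its compact $\QQ$-simple factors), it is normal in $\Gbold$, so $\Lie(\Hbold^\text{c})$ is $\Ad(h_o(\SS))$-stable and inherits a polarised weight-$0$ Hodge sub-structure from $\gfrak$. By axiom HD~$1$, the form $B'(X,Y):=-B(X,\Ad(h_o(\sqrt{-1}))Y)$ is positive definite on $\gfrak^\text{ss}_\RR$, where $B$ is the Killing form. The restriction of $B$ to the semisimple ideal $\Lie(\Hbold^\text{c})$ coincides with its intrinsic Killing form, which is negative definite by compactness of $\Hbold^\text{c}(\RR)^+$. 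Decomposing $\Lie(\Hbold^\text{c})_\RR$ into $\pm1$-eigenspaces of the involution $\Ad(h_o(\sqrt{-1}))$, the $-1$-eigenspace would violate the positivity of $B'$, so $\Ad(h_o(\sqrt{-1}))$ acts as the identity and $\Lie(\Hbold^\text{c})_\RR\subset\kfrak$. Consequently the $\Hbold^\text{c}(\RR)^+$-orbit of $h_o$ sits inside the fiber $K\cdot h_o$ of $\omega$, and by $\Hbold^\text{c}(\RR)^+$-equivariance the tangent bundle $T_{\Dcal^+_{\Hbold^\text{c}}}$ is contained in $T_\omega$ at every point. As $T^h_{\Dcal^+}\subset T_\omega^\perp\otimes\CC$ has trivial intersection with $T_\omega\otimes\CC$, any positive-dimensional analytic component of $\psi^{-1}(\{e_1\}\times\G^\text{c}\bs\Dcal^+_{\Hbold^\text{c}}\times\{e_3\})$ must lie in a fiber of $\psi$; since the slice is a special subvariety of $\Hod^\circ_\G(S,\VV)$, Theorem \ref{BKT}(3) ensures its preimage is algebraic with finitely many irreducible components, so the intersection $\psi(S)\cap(\{e_1\}\times\G^\text{c}\bs\Dcal^+_{\Hbold^\text{c}}\times\{e_3\})$ is finite.

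For (3), I would invoke o-minimal uniform finiteness: the slices $\{e_1\}\times\G^\text{c}\bs\Dcal^+_{\Hbold^\text{c}}\times\{e_3\}$ parametrised by $e_1\in\psi_\text{nc}(S)$ form an $\RR_{\text{an,exp}}$-definable family by Theorem \ref{BKT}(1)--(2), and so do their intersections with the definable set $\psi(S)$; since each fibre is finite, cell decomposition yields a uniform cardinality bound. The main obstacle is the polarization argument in (2): establishing that $\Hbold^\text{c}$ is normal in $\Gbold$ (not merely in $\Gbold^\text{der}$) and using the positivity to pin down the action of the Cartan involution on $\Lie(\Hbold^\text{c})_\RR$ requires a delicate interplay between compactness, semisimplicity, and axiom HD~$1$. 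A secondary difficulty in (1) is making precise the localization of $\hat\psi(\tilde S)$ to a single $\Fbold$-fiber and verifying that the complement $V_1$ retains the full Mumford--Tate group $\Gbold$.
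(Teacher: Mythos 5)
Your plan for part~(3) coincides with the paper's proof: the paper derives $\RR_{\rm an}$-definability of the projection $\G^{\text{nc}}\bs\Dcal^+_{\Hbold^{\text{nc}}}\times\G^{\text{c}}\bs\Dcal^+_{\Hbold^{\text{c}}}\to\G^{\text{nc}}\bs\Dcal^+_{\Hbold^{\text{nc}}}$ from the compactness of $\G^{\text{c}}\bs\Dcal^+_{\Hbold^{\text{c}}}$, obtains $\RR_{\rm an,\exp}$-definability of $\psi(S)\to\psi_{\text{nc}}(S)$ from Theorem~\ref{BKT}, uses part~(2) for pointwise finiteness of the fibers, and concludes by the uniform finiteness lemma of o-minimal structures (\cite{vdD98}, Ch.~3, Lemma~1.7, equivalently cell decomposition) --- exactly as in your plan. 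For parts~(1) and~(2) the paper simply cites \cite{CMP17}, Chapter~15, without giving an argument, and your sketches are in the spirit of the standard treatment: Andr\'e's theorem gives normality of $\Hbold$ in $\Gbold^{\text{der}}$ hence in $\Gbold$, and comparison of the negative definiteness of the Killing form on the compact ideal $\Lie(\Hbold^{\text{c}})$ with the positivity of the HD~1 form $-B(\cdot,\Ad(h_o(\sqrt{-1}))\cdot)$ on $\gfrak^{\text{ss}}_\RR$ forces $\Lie(\Hbold^{\text{c}})_\RR\subset\kfrak$.

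One caveat in your finiteness argument for~(2): the slice $\{e_1\}\times\G^{\text{c}}\bs\Dcal^+_{\Hbold^{\text{c}}}\times\{e_3\}$ corresponds in $\Hod^\circ_\G(S,\VV)$ to the image of $\Hbold^{\text{c}}(\RR)^+\cdot h$ for some $h\in\Dcal^+$, which is a \emph{weakly} special subvariety but in general \emph{not} a special subvariety unless $e_1$ and $e_3$ are themselves special points; Theorem~\ref{BKT}(3), as stated in the paper, applies to special subvarieties only, so it cannot be invoked directly here. The intended argument does not need it: since $T_{\text{slice}}\subset T_\omega$ at every point while the local period image is horizontal (tangent to $T^h_{\Dcal^+}\subset T_\omega^\perp\otimes\CC$), the intersection of the proper image $\psi'(S')$ with the compact slice is a discrete subset of a compact set, hence finite, and $\psi(S)\cap\text{slice}\subset\psi'(S')\cap\text{slice}$ is a fortiori finite. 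With this repair the proposal is consistent with the paper.
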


\begin{proof}
	For the proof of $(1)$ and $(2)$, see Chapter 15 of \cite{CMP17}. Let us prove $(3)$. Since $\G^{\text{c}}\bs \Dcal^+_{\Hbold^{\text{c}}}$ is compact, the projection 
	$$\G^{\text{nc}}\bs\Dcal^+_{\Hbold^{\text{nc}}} \times \G^{\text{c}}\bs \Dcal^+_{\Hbold^{\text{c}}}\longrightarrow\G^{\text{nc}}\bs\Dcal^+_{\Hbold^{\text{nc}}}$$
	is $\RR_{\rm an}$-definable. Since the period map $\psi$ is $\RR_{\rm an, exp}$-definable by Theorem \ref{BKT}, the map 
	$$\psi(S)\longrightarrow\psi_{nc}(S)$$
	is $\RR_{\rm an, exp}$-definable. Since each fiber of this map is finite by $(2)$, the uniformly boundedness then follows from the finiteness lemma (the Lemma $1.7$ in Chapter 3, Section 1 of \cite{vdD98}). 
\end{proof}

%%%%%%%%%%%%%%%%%%%%%%%%%%%%End of section3%%%%%%%%%%%%%%%%%%%%%%%%%

\section{non-factor special subvarieties} \label{NF}

\subsection{} \label{NF_1} In this section, we introduce the notion of non-factor special subvarieties (Definition \ref{NFdef}). This is a natural definition from the equidistribution point of view: as explained in \cite{Ullmo07}, for a sequence $(Y_n)$ of special subvarieties of $\Hod^\circ_\G(S, \VV)$, we cannot expect in general that the associated sequence of Borel probability measures $\mu_n = \mu_{Y_n}$ on $\Hod^\circ_\G(S, \VV)$ with support $Y_n$  weakly converges. For example, if $(Y_n)$ is a sequence of special points in $\Hod^\circ_\G(S, \VV)$,  then $\mu_n$ is just the Dirac measure supported at the point $Y_n$. Such a sequence can converge to a non special point or may tend to $\infty$. Even for positive dimensional special subvarieties the same problem may occur. Start with a special subvariety of $\Hod^\circ_\G(S, \VV)$ of the form $Y\times Y^\prime$ for two special subvarieties $Y$ and $Y^\prime$. Let $(y_n)$ be a sequence of special point of $Y^\prime$ and $Y_n = Y \times \{y_n\}$, then there is no hope of proving the weak convergence of $\mu_n$.

\begin{definition} [cf. \cite{Ullmo07}] \label{NFdef} \hfill
\begin{enumerate}
\item[(i)] Let $Y$ be a connected Hodge variety. A special subvariety $W$ of $Y$ is called  \textit{non-factor} if there exists no finite morphism of connected Hodge varieties: $$W_1\times W_2\to Y$$
with $W_2$ having positive dimension, such that $W$ is the image of $W_1\times\{x\}$ in $Y$ for any (necessary special) point $x$ of $W_2$. 

\item[(ii)] Let $\VV$ be a $\ZZ$-VHS on a smooth irreducible complex quasi-projective variety $S$ and let $\Hod_\G^\circ(S, \VV)$ be the associated connected Hodge variety. A special subvariety $Z$ for $(S, \VV)$ is called \textit{non-factor} if $\G_Z\bs\Dcal_Z^+$ is a non-factor special subvariety of $\Hod_\G^\circ(S, \VV)$.
\end{enumerate}
\end{definition}

\begin{remark} Note that any Hodge variety $Y$ itself is non-factor. Assume that $Y$ is of positive dimensional. For a special point $x\in Y$, the projection
$$\{x\}\times Y\to Y$$
is a finite morphism. This shows that special points are not non-factor special subvarieties of connected Hodge variety. 
\end{remark}

\begin{remark}\label{adjoint2}
	A special subvariety which contains a non-factor special subvariety is automatically non-factor. And $W$ is a non-factor special subvariety of $Y$ if and only if $W^\text{ad}$ is a non-factor special subvariety of $Y^\text{ad}$.
\end{remark}

There is a useful group-theoretic characterization of non-factor special subvarieties for a variation of Hodge structure.

Let $\VV$ be a $\ZZ$-VHS on a smooth irreducible complex quasi-projective variety $S$ with associated Hodge datum $(\Gbold, \Dcal^+)$. We assume that $\Gbold$ is semisimple of adjoint type. Let $Z$ be a special subvariety for $(S, \VV)$ with associated Hodge subdatum $(\Gbold_Z, \Dcal_Z^+)\hookrightarrow(\Gbold, \Dcal^+)$. Let $h$ be a Hodge generic point in $\Dcal_Z^+$ and denote its isotropy group in $\Gbold(\RR)$ by $M_h$. Note that $M_h$ is a compact subgroup of $\Gbold(\RR)$.

\begin{proposition}\label{centNF} 
If $Z$ is a non-factor special subvariety for $(S, \VV)$,
then the centralizer $\Zcent_\Gbold(\Gbold_Z^\text{der})(\RR)$ is contained in $M_h$.
\end{proposition}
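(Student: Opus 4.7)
The plan is to reduce the statement to a Hodge-theoretic assertion on the Lie algebra $\mathfrak{l} := \Lie(\Lbold)$ of $\Lbold := \Zcent_\Gbold(\Gbold_Z^{\text{der}})$. Since $M_h = \Zcent_{\Gbold(\RR)}(h(\SS))$, the inclusion $\Lbold(\RR)^+ \subset M_h$ is equivalent, by exponentiation, to saying that the Hodge structure $\Ad \circ h$ on $\mathfrak{l}$ is pure of type $(0,0)$. We split $\mathfrak{l} = \mathfrak{l}^{\text{der}} \oplus \Zcent(\mathfrak{l})$ as a direct sum of sub-Hodge structures (the splitting is intrinsic, hence $\Ad(h(\SS))$-stable, and $\mathfrak{l}$ itself is a sub-Hodge structure of $\gfrak$ because $h(\SS) \subset \Gbold_Z(\RR)$ normalizes $\Gbold_Z^{\text{der}}$ and therefore preserves its centralizer), and prove each summand is of type $(0,0)$.

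For the central summand we observe: (i) $\Zcent(\Gbold_Z) \subset \Zcent_\Gbold(\Gbold_Z^{\text{der}}) = \Lbold$, so $\Zcent(\Gbold_Z)^\circ \subset \Lbold$ commutes with $\Zcent(\Lbold)$; and (ii) $\Gbold_Z^{\text{der}}$ centralizes $\Lbold$ by definition of $\Lbold$, hence centralizes $\Zcent(\Lbold)$. Since $\Gbold_Z$ is the almost direct product $\Gbold_Z^{\text{der}} \cdot \Zcent(\Gbold_Z)^\circ$, it centralizes $\Zcent(\Lbold)$. Thus $h(\SS) \subset \Gbold_Z(\RR)$ acts trivially on $\Zcent(\mathfrak{l})$ via $\Ad$, showing this summand is of type $(0,0)$.

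For the derived summand we exploit the non-factor hypothesis. Form $\Gbold' := \Gbold_Z^{\text{der}} \cdot \Lbold$, an almost direct product of reductive $\QQ$-groups containing $\Gbold_Z$ (since $\Zcent(\Gbold_Z)^\circ \subset \Lbold$), and check that $(\Gbold', \Dcal'^+)$, with $\Dcal'^+$ the $\Gbold'(\RR)^+$-orbit of $h$, defines a Hodge subdatum of $(\Gbold, \Dcal^+)$; axiom HD~1 follows from the stability of both $\Gbold_Z^{\text{der}}$ and $\Lbold$ under the Cartan involution $\Int(h(i))$ (for $\Lbold$ one computes directly that $\Int(h(i))$ sends $\Zcent_\Gbold(\Gbold_Z^{\text{der}})$ into itself, using that $\Int(h(i))$ preserves $\Gbold_Z^{\text{der}}$). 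Passing to adjoints, $(\Gbold')^{\text{ad}} = (\Gbold_Z^{\text{der}})^{\text{ad}} \times \Lbold^{\text{ad}}$ is a direct product, inducing a splitting
\[
\Dcal'^{\text{ad},+} \;\cong\; \Dcal_{(\Gbold_Z^{\text{der}})^{\text{ad}}}^+ \times \Dcal_{\Lbold^{\text{ad}}}^+,
\]
in which $\Dcal_Z^+$ appears as the slice $\Dcal_{(\Gbold_Z^{\text{der}})^{\text{ad}}}^+ \times \{h_2^{\text{ad}}\}$. Choosing a product-compatible arithmetic sublattice of finite index inside the arithmetic lattice of $(\Gbold')^{\text{ad}}(\QQ)_+$, we produce a finite morphism of connected Hodge varieties $W_1 \times W_2 \to \Hod^\circ_\G(S, \VV)$ with $\im(\iota_Z)$ corresponding to $W_1 \times \{x\}$. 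If $\Dcal_{\Lbold^{\text{ad}}}^+$ were positive-dimensional, this would exhibit $Z$ as a factor special subvariety in the sense of Definition~\ref{NFdef}, contradicting the hypothesis. Hence $\Dcal_{\Lbold^{\text{ad}}}^+$ is a point, equivalently $\mathfrak{l}^{\text{der}} \cong \mathfrak{l}^{\text{ad}}$ is of Hodge type $(0,0)$. Combined with the previous paragraph, $\mathfrak{l}$ is pure of type $(0,0)$, so $\Lbold(\RR)^+ \subset M_h$; the extension to the full $\Lbold(\RR)$ is a routine component-group argument using the finiteness of $\Lbold(\RR)/\Lbold(\RR)^+$ and the fact that the $\Lbold(\RR)$-orbit of $h$ in $\Dcal$ is $0$-dimensional on every connected component.

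The main obstacle I anticipate is the third paragraph: producing the product-compatible arithmetic sublattice of finite index and realizing the factor structure as a genuine finite morphism of connected Hodge varieties in the sense of Definition~\ref{NFdef}, since an arithmetic lattice in a direct product of algebraic groups is in general only commensurable with, not equal to, a product of arithmetic lattices of the factors.
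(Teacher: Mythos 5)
Your proof is correct and follows essentially the same route as the paper: it forms the normalizer $\Gbold'=\Gbold_Z^{\text{der}}\cdot\Zcent_\Gbold(\Gbold_Z^{\text{der}})$, exhibits it as a product Hodge datum, and invokes the non-factor hypothesis to force the $\Zcent_\Gbold(\Gbold_Z^{\text{der}})$-factor's Mumford-Tate domain to be a point. Your extra care in splitting $\Lie(\Zcent_\Gbold(\Gbold_Z^{\text{der}}))$ into derived and central pieces (and observing that $h(\SS)$ already acts trivially on the center) makes explicit a small point the paper leaves implicit, but this is a refinement of the same argument rather than a different strategy; the commensurability concern you flag at the end is handled, as you suspect, by the definition of non-factor being phrased via a finite morphism rather than an isomorphism.
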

\begin{proof}
Let $\mathfrak{g}:=\Lie(\Gbold), \mathfrak{h}:=\Lie(\Gbold_Z^{\text{der}})$ and $\mathfrak{c}:=\Lie(\Zcent_\Gbold(\Gbold_Z^\text{der}))$. Since $\Gbold_Z^{\text{der}}$ is semisimple, the Lie algebra $\mathfrak{g}$ decomposes as a $\Gbold_Z^{\text{der}}$-module as follows:
\begin{equation} \label{decomp1}
\mathfrak{g} = \mathfrak{h} \oplus \mathfrak{c} \oplus \mathfrak{l}.
\end{equation}
where $\mathfrak{l}$ is the orthogonal complement of $\mathfrak{h} \oplus \mathfrak{c}$ with respect to the Killing form of $\gfrak$.

Notice that $\mathfrak{g}$ carries a natural weight $0$ polarized rational Hodge structure and this defines a variation of Hodge structure $\VV_{\mathfrak{g}}$ on $S$. Let $\Hbold_Z$ be the algebraic monodromy group of $Z$ for $\VV$. It follows from \cite{An92} Theorem 5.1 that $\Hbold_Z$ is a normal subgroup of $\Gbold_Z^{\text{der}}$. So the above decomposition (\ref{decomp1}) of $\mathfrak{g}$ induces a decomposition of the underlying local system $\VV_{\mathfrak{g}}$:
\begin{equation} \label{decomp2}
\VV_{\mathfrak{g}} = \VV_{\mathfrak{h}} \oplus \VV_{\mathfrak{c}} \oplus \VV_{\mathfrak{l}}.
\end{equation}

Let $\Gbold^{1}$ be the connected $\QQ$-algebraic subgroup of $\Gbold$ with Lie algebra $\mathfrak{h} \oplus \mathfrak{c}$. Then $\Gbold^1$ is reductive. And it can be seen easily that $\Gbold^{1}$ is the connected $\QQ$-subgroup of $\Gbold$ generated by $\Gbold_Z$ and $(\Zcent_\Gbold(\Gbold_Z^\text{der}))^{\circ}$. In fact, $\Gbold^1$ is the identity component of the normalizer of $\Gbold_Z^\text{der}$ in $\Gbold$. 

Let $\Dcal^{1} \subset \Dcal$ be the $\Gbold^{1}(\RR)$-orbit of $h\in\Dcal^+_Z$. Then $(\Gbold^1, \Dcal^1)$ is a Hodge subdatum. By (\ref{decomp2}), we have a decomposition of the connected Hodge subdatum $(\Gbold^1, \Dcal^{1, +})$ and a finite morphism
\begin{equation*}
(\Gbold^1, \Dcal^{1, +}) \cong (\Gbold_Z, \Dcal_Z^+) \times (\Zcent_\Gbold(\Gbold_Z^\text{der}))^{\circ}, \Dcal^{2, +})\to (\Gbold, \Dcal).
\end{equation*}

If $\Zcent_\Gbold(\Gbold_Z^\text{der})(\RR)$ is not contained in $M_h$, then $\Dcal^{2, +}$ is of positive dimensional and $\G_Z\bs\Dcal_Z^+$ is the image of a $\G_Z\bs\Dcal_Z^+\times\{x_2\}$ in $\Hod^\circ_\G(S, \VV)$. So $Z$ is not non-factor, which is a contradiction.
\end{proof}

\subsection{A description of special subvarieties}\label{description} Note that any point $h\in\Dcal^{+}$ induces a projection map
$$
\begin{aligned}
&\pi_h: & \Omega:=\G\backslash\Gbold(\RR)^+ &\rightarrow& \G\bs\Dcal^+ &= \Hod_\G^\circ(S, \VV)&\\
 & &  [g]&\mapsto&[gh].&&
\end{aligned}
$$

Let $Z$ be a special subvariety of $S$. We denote by $(\Gbold_Z, \Dcal_Z^+)$ the corresponding connected Hodge subdatum and $W$ the corresponding special subvariety of $\Hod_\G^\circ(S, \VV)$. For $h\in\Dcal_Z^+$, let $M_h :=\Zcent_{\Gbold(\RR)}(h)$ be the stabilizer of $h$ in $\Gbold(\RR)$. Then $M_h$ is a compact subgroup containing the center of $\Gbold_Z(\RR)_+$. Hence we have the following description of $W$:
\begin{eqnarray*}
W &=& \G \backslash \G\Gbold_Z(\RR)_+ h \\
  &=& \G \backslash \G\Gbold_Z^{\text{der}}(\RR)^+ h=\pi_h(\G \backslash \G\Gbold_Z^{\text{der}}(\RR)^+)\\
  &\cong &\G\backslash\G\Gbold_Z^{\text{der}}(\RR)^+M_h/M_h.
\end{eqnarray*}

Let $\mu_Z$ be the unique $\Gbold_Z^{\text{der}}(\RR)^+$-invariant Borel probability measure supported on $\G \backslash \G\Gbold_Z^{\text{der}}(\RR)^+$ and $\mu_W := (\pi_h)_*\mu_Z$. As there is a canonical $\Gbold_Z^{\text{der}}(\RR)^+$-invariant metric on $\Dcal_Z^+$, the measure $\mu_W$ is the same as the the normalized measure induced from the Hermitian metric. In particular the probability measure $\mu_M$ is independent of the choice of $h\in\Dcal_Z^+$.\\

Let $\g\in\G, \Gbold_{Z,\g}=\g\Gbold_Z\g^{-1}, h_\g=\g\cdot h$ and $\Dcal_{Z,\g}$ the $\Gbold_{Z,\g}(\RR)$-conjugacy class of $h_\g$. We also have 
$$W=\pi_{h_\g}(\G\backslash\G\Gbold^{\text{der}}_{Z,\g}(\RR)^+).$$
Fixing a fundamental domain $\Fcal$ for the action of $\G$ on $\Dcal^+$, we can thus choose $h\in\Fcal$ in the description of $W$.

\subsection{Non-factor special subvarieties and recurrence to compact sets}

We keep the same notations as in the sections \ref{NF_1} and \ref{description}. The following theorem is a corollary of a deep result of Dani and Margulis on the quantitative recurrence to compact sets for unipotent flows on $\Omega = \G\bs\Gbold(\RR)^+$ (\cite{DaniMargulis91} Theorem 2). It tells us that unipotent flows never send lattices off to infinity, which (in principle) allows us to argue “as if” $\Omega$ was compact when considering unipotent flows. This will be a key ingredient in our proof of the main theorem. It was used by Clozel and Ullmo (cf. \cite{ClozelUllmo05} Lemma 4.5) and Ullmo \cite{Ullmo07} in their proof of equidistribution of strongly (more generally, non-factor) special subvarieties in a Shimura variety and it is not difficult to adapt their arguments to our situation.
\begin{theorem}\label{DM}
There exists a compact subset $C$ of $\Hod_\G^\circ(S, \VV)$ such that $\G_Z\bs\Dcal_Z^+\cap C\neq\varnothing$ for any non-factor special subvariety $Z$ of Shimura type for $(S, \VV)$.
\end{theorem}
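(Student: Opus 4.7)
The plan is to reduce the problem to the homogeneous space $\Omega := \G \bs \Gbold(\RR)^+$ and then invoke the Dani--Margulis quantitative non-divergence theorem for unipotent orbits. Fix once and for all a Hodge generic reference point $h_0 \in \Dcal^+$ with compact isotropy $M_{h_0}$; the natural projection
$$\pi : \Omega \longrightarrow \Omega/M_{h_0} = \Hod_\G^\circ(S,\VV)$$
has compact fibers and in particular sends compact sets to compact sets. By the description of special subvarieties in Section~\ref{description}, the image $W_Z \subset \Hod_\G^\circ(S,\VV)$ of any special subvariety $Z$ is the image, under a suitable translate of $\pi$, of the closed orbit $\G \bs \G \Gbold_Z^{\text{der}}(\RR)^+ \subset \Omega$. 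It therefore suffices to produce a single compact $C' \subset \Omega$ meeting this closed orbit for every non-factor Shimura-type $Z$; the compact set $C := \pi(C')$ will then have the claimed property.

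Next I would verify that $H_Z := \Gbold_Z^{\text{der}}(\RR)^+$ is a closed Lie subgroup of $\Gbold(\RR)^+$ of type $\Hcal$. This is where the Shimura hypothesis on $Z$ enters: axiom HD~$3$ applied to the Shimura subdatum $(\Gbold_Z, \Dcal_Z^+)$ forces $\Gbold_Z^{\text{ad}}$ (and hence $\Gbold_Z^{\text{der}}$) to have no $\RR$-anisotropic $\QQ$-simple factor, so the semisimple $\QQ$-group $\Gbold_Z^{\text{der}}$ is of type $\Kcal$, and Lemma~\ref{HK}~(1) then yields the type $\Hcal$ conclusion for $H_Z$. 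The closed orbit $\G \bs \G H_Z$ thus carries its canonical $H_Z$-invariant Borel probability measure $\mu_Z$. The non-factor hypothesis is used only at this stage, to exclude the degenerate case of a special point: it guarantees that $\Gbold_Z^{\text{der}}$ is non-trivial and therefore, being of type $\Kcal$, contains a non-trivial one-parameter unipotent subgroup $U \subset H_Z$.

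The core of the argument is the Dani--Margulis quantitative non-divergence theorem (\cite{DaniMargulis91}, Theorem~$2$). Given $\epsilon \in (0,1)$, this theorem produces a compact subset $C' \subset \Omega$, depending only on $\Omega$ and $\epsilon$, such that for every one-parameter unipotent subgroup of $\Gbold(\RR)^+$ and every Borel probability measure $\nu$ on $\Omega$ invariant under that subgroup and not concentrating on cusps, one has $\nu(C') \geq 1 - \epsilon$. Applying this to $U \subset H_Z$ and to the measure $\mu_Z$ (whose support is the closed finite-volume orbit $\G \bs \G H_Z$, hence cannot concentrate on cusps) yields $\mu_Z(C') \geq 1 - \epsilon > 0$, and in particular $\G \bs \G H_Z$ meets $C'$.

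The main obstacle, and essentially the only non-formal step, is ensuring that the compact set $C'$ can be chosen uniformly in $Z$. This is exactly the strength of Dani--Margulis: the compact set depends only on the ambient arithmetic quotient and on $\epsilon$, not on the particular unipotent flow probing it. Nothing else in the argument requires the ambient Hodge datum $(\Gbold,\Dcal)$ to satisfy axiom HD~$2$, which is why this extension of \cite{ClozelUllmo05}~Lemma~4.5 and of \cite{Ullmo07} from the Shimura-variety setting to arbitrary Hodge varieties is essentially formal, once the type $\Kcal$ property of $\Gbold_Z^{\text{der}}$ has been secured.
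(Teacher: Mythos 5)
You misidentify the role of the non-factor hypothesis, and as a result you state a version of Dani--Margulis that is too strong to be true. The content of \cite{DaniMargulis91} Theorem~2, as used via \cite{ClozelUllmo05} Lemma~4.4, is a dichotomy: there is a compact $C'\subset\Omega$ such that for every $g\in\Gbold(\RR)^+$ and every one-parameter unipotent subgroup $U$, \emph{either} $\G\bs\G gU$ meets $C'$, \emph{or} $gUg^{-1}$ is contained in a proper $\QQ$-parabolic subgroup of $\Gbold$. This escape clause must be ruled out by hand, and that is exactly what the non-factor hypothesis buys: by Proposition~\ref{centNF} the centralizer $\Zcent_\Gbold(\Gbold_Z^{\text{der}})(\RR)$ is compact, hence $\Zcent_\Gbold(\Gbold_Z^{\text{der}})$ is $\QQ$-anisotropic, and \cite{EMS97} Lemma~5.1 then shows that $\Gbold_Z^{\text{der}}$ (equivalently any $\QQ$-conjugate of it) cannot lie in a proper $\QQ$-parabolic of $\Gbold$. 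Your reading of non-factor as merely excluding special points is insufficient: the theorem genuinely fails for factor positive-dimensional Shimura-type special subvarieties (for instance $Y\times\{y_n\}$ in a product Hodge variety with $y_n$ tending to a cusp of the second factor), so any argument that does not use the non-factor hypothesis beyond ruling out points must have a flaw. The ``not concentrating on cusps'' clause you invoke is precisely the parabolic escape route, and it cannot be dismissed by noting that $\G\bs\G\Gbold_Z^{\text{der}}(\RR)^+$ is a closed finite-volume orbit.

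Relatedly, your reduction to $\Omega$ elides the varying base point. The orbit $\G\bs\G\Gbold_Z^{\text{der}}(\RR)^+$ always passes through the identity coset and so trivially meets any compact neighborhood of it; this cannot be the right object to test against $C'$. Rather, $\G_Z\bs\Dcal_Z^+ = \pi_h(\G\bs\G\Gbold_Z^{\text{der}}(\RR)^+)$ with $h\in\Dcal_Z^+$ unbounded, and one must control both factors simultaneously. The paper writes $h = \gamma\alpha\cdot h_0$ with $\alpha$ in a fixed compact neighborhood $V$ of the identity and $\gamma\in\Gbold(\QQ)^+$ (using density of $\Gbold(\QQ)^+$), so that $\pi_h$ becomes $\pi_{h_\alpha}$ with $h_\alpha=\alpha\cdot h_0$ bounded, while the orbit becomes the translate $\G\bs\G\gamma\Gbold_{Z,\gamma}^{\text{der}}(\RR)^+$ with $\Gbold_{Z,\gamma}:=\gamma^{-1}\Gbold_Z\gamma$. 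It is this translated orbit to which Dani--Margulis is applied, and it is precisely the one that can escape to infinity unless the parabolic alternative is excluded via Proposition~\ref{centNF} and \cite{EMS97} Lemma~5.1, neither of which appears in your argument.
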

\begin{proof}
	It follows easily from \cite{DaniMargulis91} Theorem 2, that there exists a compact subset $C^\prime$ of $\Omega$ such that for all unipotent one-parameter subgroup $U\subset\Gbold(\RR)^+$ and $g\in\Gbold(\RR)^+$, if 
	$$\G\bs\G gU\cap C^\prime = \varnothing,$$
	then there exist a proper $\QQ$-parabolic subgroup $\Pbold^\prime$ of $\Gbold$ such that
	$$gUg^{-1}\subset\Pbold^\prime(\RR).$$
	Let $V\subset\Gbold(\RR)^+$ be a compact neighborhood of the identity element $e\in\Gbold(\RR)^+$. Then
	$$C^{\prime\prime} := C^\prime V = \{cv|c\in C, v\in V\}$$
	is also a compact subset of $\Omega$. Fix a point $h_0\in\Dcal^+$ and let $C = \pi_{h_0}(C^{\prime\prime})$. Then for any point $\alpha\in V$, we have
	$$\pi_{h_\alpha}(C^\prime)\subset C,$$
	where $h_\alpha = \alpha\cdot h_0$.
	
	For $h\in\Dcal_Z^+$, since $\Gbold(\QQ)^+$ is dense in $\Gbold(\RR)^+$, there exists $\alpha\in V$ and $\g\in\Gbold(\QQ)^+$ such that $h = \g\alpha\cdot h_0$. We then have
	\begin{eqnarray*}
\G_Z\bs\Dcal_Z^+ &=& \G \bs \G\Gbold_Z^{\text{der}}(\RR)_+\g\alpha\cdot h_0 \\
                 &=& \G \bs \G\g\g^{-1}\Gbold_Z^{\text{der}}(\RR)^+\g\alpha\cdot h_0\\
                 &=& \pi_{h_\alpha}(\G \bs \G\g\Gbold_{Z,\g}^{\text{der}}(\RR)^+)
\end{eqnarray*}
where $\Gbold_{Z,\g} := \g^{-1}\Gbold_Z\g$. If $\G_Z\bs\Dcal_Z^+\cap C = \varnothing$, then a fortiori $\G_Z\bs\Dcal_Z^+\cap \pi_{h_\alpha}(C^\prime) = \varnothing$ and hence
$$\G \bs \G\g\Gbold_{Z,\g}^{\text{der}}(\RR)^+\cap C^\prime = \varnothing.$$
Since $Z$ is of Shimura type, $\Gbold_{Z,\g}^{\text{der}}$ is of type $\Kcal$, and hence $\Gbold_{Z,\g}^{\text{der}}(\RR)^+$ is of type $\Hcal$. Then by \cite{ClozelUllmo05} Lemma 4.4, there exist a proper $\QQ$-parabolic subgroup $\Pbold$ of $\Gbold$ such that 
$$\Gbold_{Z,\g}^{\text{der}}\subset\Pbold.$$
But by Proposition \ref{centNF} and \cite{EMS97} Lemma 5.1, this cannot happen if $Z$ is non-factor.
\end{proof}

%%%%%%%%%%%%%%%%%%%%%%%%%%End of Section 4%%%%%%%%%%%%%%%%%%%%%%%%

\section{Proof of the main results} \label{mainresult}

We now have all the necessary ingredients for the proof of the Theorems \ref{Equidis:MT} and \ref{NF:finite}.

Let $\VV$ be a $\ZZ$-VHS on a smooth irreducible complex quasi-projective variety $S$ and $(\Gbold,\Dcal^+)$ be the associated connected Hodge datum. By part (\ref{trivial}) of Theorem \ref{StrPer}, we may and will assume that $\VV$ has no isotrivial factors. 
\subsection{Proof of Theorem \ref{Equidis:MT}} \hfill 

We remark first that we can reduce to the case where $\Gbold$ is of adjoint type: this results from Remarks \ref{adjoint1} and \ref{adjoint2}, and the evident compatibility between the canonical measures associated to non-factor special subvarieties of $\G\bs\Dcal^+$ and of $\G^{\ad}\bs\Dcal^{\ad, +}$. By the structure theorem of period maps (Theorem \ref{StrPer}) we will assume that $\Gbold$ is adjoint of non-compact type. Let us fix a fundamental domain $\Fcal$ of $\Dcal^+$ for the action of $\G$.\\

\textbf{Step 1. Construction of the the limit.} \\

Let $(Z_n)_{n\in\NN}$ be a sequence of non-factor special subvarieties of $S$, which are of Shimura type with dominant period maps. We denote by $(\Gbold_n, \Dcal_n^+)_{n\in\NN}$ the corresponding sequence of Hodge subdata of $(\Gbold, \Dcal^+)$ and $(W_n)_{n\in\NN}$ the corresponding sequence of non-factor special subvariety of $\Hod^\circ_\G(S, \VV)$. 

For each $n\in\NN$, by the description in Section \ref{description} of special subvarieties for a variation of Hodge structure, we can write $W_n$ as
$$W_n=\pi_{h_n}(\G\backslash\G\Gbold^{\text{der}}_n(\RR)^+)$$ 
for any $h_n\in\Dcal_n^+\cap\Fcal$. By Theorem \ref{DM}, there exists a compact subset $C$ of $\Fcal$ such that $C\cap \Dcal_n^+\neq\varnothing$. We can thus choose $h_n\in C\subset\Fcal$. Since by assumption $(\Gbold_n, \Dcal_n^+)$ is a connected Shimura datum, the $\Gbold_n^{\text{der}}$ is of type $\Kscr$ and hence $\Gbold_n^{\text{der}}(\RR)^+$ is a type $\Hscr$ connected closed Lie subgroups of $\Gbold(\RR)^+$ by Lemma \ref{HK}. Let $(\mu_n)_{n\in\NN}$ be the sequence in $\Pscr(\Omega)$ of the canonical Borel probability measures supported on $\G\backslash\G\Gbold_n^{\text{der}}(\RR)^+$. By Theorem \ref{MS95}, there exists a connected Lie subgroup $F$ of $\Gbold(\RR)^+$ of type $\Hscr$ such that after possibly passing to a subsequence 

\begin{enumerate}\label{property}
\item[(a)] $(\mu_n)_{n\in\NN}$ weakly converges to $\mu_F$;
\item[(b)] $\supp (\mu_n)=\G\bs\G\Gbold_n^{\text{der}}(\RR)^+\subset\G\bs\G F$, for $n\gg 0$;
\item[(c)] the sequence $h_n$ converges to $h\in C\subset\Fcal$.
\end{enumerate}

Let $\Hbold$ be the smallest $\QQ$-algebraic subgroup of $\Gbold$ such that $F\subset \Hbold(\RR)$. Then again by Lemma \ref{HK}, $\Hbold$ is of type $\Kscr$ and $\Hbold(\RR)^+ = F$. The property (b) implies that $\Gbold_n^{\text{der}}(\RR)^+\subset\Hbold(\RR)^+$, for $n\gg0$. Hence we deduce that
$$\Gbold_n^{\text{der}}\subset\Hbold, n\gg0.$$

For $n$ big enough, since $Z_n$ is a non-factor special subvariety of $S$, by Proposition \ref{centNF}, the centralizer $\Zcent_\Gbold(\Gbold_n^\text{der})(\RR)$ is compact. In particular the $\QQ$-subgroup $\Zcent_\Gbold(\Gbold_n^\text{der})$ is $\QQ$-anisotropic; that is, it contains no non-trivial $\QQ$-split torus. Hence $\Hbold$ is reductive by \cite{EMS97} Lemma 5.1. Since $\Hbold$ is of type $\Kscr$, it follows that $\Hbold$ is semi-simple of non-compact type.

 Let $\Dcal^+_\infty\subset\Dcal^+$ be the $\Hbold(\RR)^+$-conjugacy class of $h$, $W_\infty := \pi_h(\G\bs\G F)$ and $\mu_\infty := (\pi_h)_*\mu_F$.
 
 \begin{lemma} \label{convmeasure}
 The sequence of measures $((\pi_{h_n})_*\mu_n)_{n\in\NN}$ weakly converges to $\mu_\infty$.
 \end{lemma}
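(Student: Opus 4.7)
The plan is to verify weak convergence by testing against bounded continuous functions. For any $f\in C_b(\Hod^\circ_\G(S,\VV))$, we add and subtract $\int (f\circ\pi_h)\,d\mu_n$ to obtain the decomposition
\[
\int f\,d((\pi_{h_n})_*\mu_n) - \int f\,d((\pi_h)_*\mu_F) = \int (f\circ\pi_{h_n} - f\circ\pi_h)\,d\mu_n + \int (f\circ\pi_h)\,d(\mu_n-\mu_F),
\]
and show that each summand tends to $0$.

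The second summand is the easy one. Since $\pi_h$ is continuous and $f$ is bounded continuous, $f\circ\pi_h$ belongs to $C_b(\Omega)$, so the weak convergence $\mu_n\to\mu_F$ established in Step 1 (via Theorem \ref{MS95}) forces this integral to tend to $0$.

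For the first summand the plan is to combine the convergence $h_n\to h$ with tightness of $\{\mu_n\}$. Tightness is a standard consequence of Prokhorov's theorem applied to weakly convergent Borel probability measures on the Polish space $\Omega$ (or, intrinsically, of the compactness of $\Qscr(\Omega)$ recorded in Theorem \ref{MS95}(1)): for any $\eps>0$ there exists a compact $K\subset\Omega$ with $\mu_n(\Omega\setminus K)<\eps$ for every $n$. Fix any compact neighborhood $V$ of $h$ in $\Dcal^+$. The action map $\Omega\times\Dcal^+\to\Hod^\circ_\G(S,\VV)$, $([g],h')\mapsto[gh']$, is jointly continuous, so the function $(g,h')\mapsto f([gh'])$ is uniformly continuous on the compact set $K\times V$. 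Since $h_n$ eventually lies in $V$, this uniform continuity yields $\sup_{g\in K}|f(gh_n)-f(gh)|\to 0$. Splitting the integral over $K$ and its complement and using $\|f\|_\infty<\infty$ on the latter then produces a bound of order $\eps$ plus a vanishing term, completing the estimate.

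Main obstacle: there is no genuine obstruction here; the lemma is the routine compatibility of narrow convergence of measures with uniform convergence of a family of pushforward maps on tight sets. The only mild subtlety is transferring tightness of $\{\mu_n\}$ across the varying maps $\pi_{h_n}$, which is handled by the compact neighborhood $V$ capturing all sufficiently large $h_n$ and the resulting uniform control of $f\circ\pi_{h_n}-f\circ\pi_h$ on compact sets.
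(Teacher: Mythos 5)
Your proof is correct and uses the same core idea as the paper: split
\[
(\pi_{h_n})_*\mu_n(f)-(\pi_h)_*\mu_F(f)=\bigl(\mu_n(f\circ\pi_{h_n})-\mu_n(f\circ\pi_h)\bigr)+\bigl(\mu_n(f\circ\pi_h)-\mu_F(f\circ\pi_h)\bigr),
\]
kill the second term by the weak convergence $\mu_n\to\mu_F$, and kill the first by the convergence $h_n\to h$ together with continuity of the action. The only substantive difference is how you control the first term: the paper tests against $f\in C_c$, so $f\circ\pi_h$ and $f\circ\pi_{h_n}$ are compactly supported (the $\pi_{h'}$ are proper, with fibers compact $M_{h'}$-cosets), and uniform convergence $\pi_{h_n}\to\pi_h$ on compacts then gives $\|f\circ\pi_{h_n}-f\circ\pi_h\|_\infty\to 0$, which suffices since the $\mu_n$ are probability measures. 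You instead test against $f\in C_b$ and replace compact support by tightness of $\{\mu_n\}$ (via Prokhorov or Theorem \ref{MS95}(1)). Both are valid routes to the same estimate; yours is marginally more work but avoids having to observe properness of the $\pi_{h'}$, while the paper's is slightly shorter because compact support of the test function does that localisation for free.
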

 
 \begin{proof} 
 For any continuous function $f$ on $\G\backslash\Dcal^+$ with compact support, we have
\begin{eqnarray*}
\pi_{h_n*}\mu_n(f)-\pi_{h*}\mu_F(f) &=& \mu_n(f\pi_{h_n})-\mu_F(f\pi_h)\\
                                      &=& \mu_n(f\pi_{h_n})-\mu_n(f\pi_h)+\mu_n(f\pi_h)-\mu_F(f\pi_h).
\end{eqnarray*}
By property (a), we have $\mu_n(f\pi_h)-\mu_F(f\pi_h)\to 0$, as $n\to\infty$. Since $h_n\to h$ as $n\to\infty$ by property (c), the sequence $(\pi_{h_n})_{n\in\NN}$ converges to $\pi_h$ and uniformly on all compact subsets. Since $\mu_n$ are probability measures, we have $\mu_n(f\pi_{h_n})-\mu_n(f\pi_h)\to 0$ as $n\to\infty$. Hence we have the convergence.
\end{proof}
\hfill

\textbf{Step 2. Show that $\Dcal^+_\infty$ is "horizontal".} \\

For this purpose, we recall a basic result of Griffiths on the analyticity of period images.

\begin{theorem}[Griffiths \cite{GriPerIII} Theorems 9.5 and 9.6] Let $\bar{S}$ be a smooth projective compactification of $S$ with $\bar{S}\bs S$ normal crossing divisor. Let $S^\prime$ be the union of $S$ with those points at infinity around which the monodromies are of finite order. Then the period map $\psi$ extends holomorphically to a proper map $\psi^\prime: S^\prime\to \Hod_\G^\circ(S, \VV)$ and the image $\psi^\prime(S^\prime)$ contains $\psi(S)$ as the complement of an analytic subvariety.
\end{theorem}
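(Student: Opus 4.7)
Since $\bar S\setminus S$ is a normal crossing divisor, my strategy is local: near each point $p\in\bar S\setminus S$, choose a coordinate polydisk $\Delta^n$ centered at $p$ in which the boundary divisor is cut out by $z_1\cdots z_k=0$ for some $k$, and analyze the restriction of $\psi$ to $U^* := (\Delta^*)^k\times\Delta^{n-k}$. Let $T_i\in\G$ denote the monodromy transformation of $\VV$ around the $i$-th boundary branch; by Borel's theorem each $T_i$ is quasi-unipotent.

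Assume first that $p\in S'$, so that every $T_i$ has finite order $m_i$. After passing to the branched cover $z_i\mapsto z_i^{m_i}$ the local monodromy becomes trivial, and the period map lifts to a holomorphic map $\tilde\psi: U^*\to\Dcal^+$ via the universal cover. Horizontality of $\tilde\psi$ together with Griffiths' distance-decreasing property for horizontal holomorphic maps into $\Dcal^+$ (coming from the curvature of the Hodge bundles) shows that $\tilde\psi$ has image in a relatively compact subset of $\Dcal^+$, since trivial monodromy precludes the logarithmic growth that could otherwise arise. The Riemann removable singularity theorem then extends $\tilde\psi$ across the boundary, and descending yields a holomorphic extension of $\psi$ to $p$; this defines $\psi'$ on $S'$.

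For properness I would invoke Schmid's nilpotent and $\SL_2$-orbit theorems: if the monodromy around some branch through $q\in\bar S\setminus S'$ has infinite order, Schmid's asymptotic analysis shows that the pulled-back Hodge filtrations at points $x_n\to q$ diverge in any $\G$-invariant Riemannian metric on $\Dcal^+$, so their images escape every compact subset of $\G\bs\Dcal^+$. Hence the preimage under $\psi'$ of any compact set is a closed subset of $S'$ avoiding a neighborhood of $\bar S\setminus S'$, and is therefore compact. Finally, with $\psi'$ proper and holomorphic, Remmert's proper mapping theorem implies that $\psi'(S')$ is a closed analytic subset of $\G\bs\Dcal^+$, and the same holds for $\psi'(S'\setminus S)$, giving the desired description of $\psi(S)=\psi'(S)$ as the complement of an analytic subvariety inside $\psi'(S')$.

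\emph{Main obstacle.} The crux is the sharp control of the asymptotics of the Hodge filtrations near an arbitrary boundary stratum with unipotent monodromy, which is precisely the content of Schmid's orbit theorems; without them neither the boundedness used for the extension at finite-monodromy points nor the divergence used for properness would be accessible.
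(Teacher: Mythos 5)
The paper does not prove this statement; it quotes it verbatim from Griffiths' 1970 paper \cite{GriPerIII} (Theorems 9.5 and 9.6), so there is no in-paper argument to compare against. Your reconstruction is plausible as a modern proof, but it is not the route Griffiths took: you invoke Schmid's nilpotent and $\SL_2$-orbit theorems for the properness, yet these postdate (1973) the cited result. Griffiths' own argument rests on the completeness of the Griffiths--Schmid metric on horizontal tangent directions and the associated Ahlfors--Schwarz distance-decreasing estimate for horizontal holomorphic maps --- precisely the machinery developed in that same paper. Schmid's asymptotics yield a sharper and arguably cleaner proof, but they are logically heavier than what the theorem as cited actually depends on, and identifying them as ``the crux'' inverts the historical dependency.

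Two steps in your sketch also want tightening. The phrase ``trivial monodromy precludes the logarithmic growth'' is not yet an argument: the standard way to get boundedness after killing the monodromy by the branched cover is to note that the Poincar\'e length of the circle $\{|z|=r\}$ in $\Delta^*$ is $2\pi/\log(1/r)\to 0$, so by distance-decreasing the images of these circles have shrinking diameter; one then extends into the compact dual $\check{\Dcal}$ and uses the polarization/horizontality constraints to check that the limiting filtration is still a point of $\Dcal^+$. For the last claim, Remmert does give that $\psi'(S'\setminus S)$ is an analytic subvariety of strictly smaller dimension, but $\psi'(S')\setminus\psi(S)$ need not literally coincide with $\psi'(S'\setminus S)$, since a boundary fiber of $\psi'$ can meet an interior fiber. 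What your argument actually yields --- and what the paper later uses --- is the containment $\psi(S)\supseteq\psi'(S')\setminus\psi'(S'\setminus S)$, i.e.\ $\psi(S)$ contains the complement of a proper analytic subvariety of $\psi'(S')$; you should state it in that form rather than as an equality.
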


\begin{lemma}
$W_\infty$ is contained in $\psi^\prime(S^\prime)$.
\end{lemma}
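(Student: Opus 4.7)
The plan is to combine the properness of Griffiths' extended period map $\psi^\prime$ with the weak convergence $(\pi_{h_n})_*\mu_n \to \mu_\infty$ of Lemma \ref{convmeasure}. First I would observe that, because $\psi^\prime : S^\prime \to \Hod_\G^\circ(S, \VV)$ is proper, its image $A := \psi^\prime(S^\prime)$ is a closed subset of the connected Hodge variety, and by Griffiths' theorem it contains $\psi(S)$.

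Next I would show that $W_n \subset A$ for every $n$. By hypothesis, $Z_n$ is of Shimura type with dominant period map onto $W_n$, so $\dim_\CC \psi(Z_n) = \dim_\CC W_n$. Since $\psi$ is $\RR_{\rm an, exp}$-definable by Theorem \ref{BKT}(2) and $Z_n$ is algebraic, the image $\psi(Z_n)$ is a definable subset of $W_n$ of maximal dimension, and so has non-empty interior in $W_n$ by o-minimal dimension theory. The irreducibility of the special subvariety $W_n$ as a complex analytic space then gives $\overline{\psi(Z_n)} = W_n$, and hence
$$W_n = \overline{\psi(Z_n)} \subset \overline{\psi(S)} \subset A.$$

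I would then conclude by the upper-semi-continuity of measure on closed sets under weak convergence. For $n \gg 0$ the measure $(\pi_{h_n})_*\mu_n$ is supported on $W_n \subset A$, so $\bigl((\pi_{h_n})_*\mu_n\bigr)(A) = 1$; applying the semi-continuity to the closed set $A$ yields
$$\mu_\infty(A) \;\geq\; \limsup_{n\to\infty} \bigl((\pi_{h_n})_*\mu_n\bigr)(A) \;=\; 1.$$
Since $\supp(\mu_\infty) = W_\infty$, this forces $W_\infty \subset A = \psi^\prime(S^\prime)$, as required.

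The only delicate step is the implication that dominance of $\psi|_{Z_n}$ forces $\overline{\psi(Z_n)} = W_n$: it really does require the definability theorem of Bakker--Klingler--Tsimerman, since without it a dominant holomorphic map between complex analytic spaces need not have analytically dense image. Once this is in place, everything else is a formal consequence of the properness of $\psi^\prime$ and the standard characterization of weak convergence on closed sets.
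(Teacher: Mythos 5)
Your proof follows essentially the same route as the paper's: establish $W_n\subset\psi^\prime(S^\prime)$ from dominance and the closedness of the proper extension $\psi^\prime$, then pass to the limit using weak convergence of the measures. Your portmanteau inequality $\mu_\infty(A)\geq\limsup_n(\pi_{h_n})_*\mu_n(A)$ is exactly equivalent to the paper's argument by contradiction with an open neighborhood of a point of $W_\infty\setminus\psi^\prime(S^\prime)$ having positive $\mu_\infty$-measure but zero $(\pi_{h_n})_*\mu_n$-measure.

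One intermediate step in your argument is stated too loosely. Having non-empty interior in $W_n$ together with irreducibility of $W_n$ does \emph{not} by itself force $\overline{\psi(Z_n)}=W_n$: the open unit disk inside $\CC$ has non-empty interior and $\CC$ is irreducible, yet the closure is not all of $\CC$. What saves the argument is the properness of $\psi^\prime$: by Remmert's proper mapping theorem, $\psi^\prime(Z_n^\prime)$ is a \emph{closed analytic} subset of $W_n$ containing $\psi(Z_n)$; once it contains a nonempty open set it is a full-dimensional closed analytic subset of the irreducible $W_n$, hence equals $W_n$. Alternatively, since $W_n$ is a Shimura variety, the period map $\psi|_{Z_n}$ is algebraic (Borel's extension theorem, or the definable Chow theorem of Peterzil--Starchenko underlying Theorem \ref{BKT}), so its image is constructible of maximal dimension and therefore contains a Zariski-open, hence analytically dense, subset. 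Either repair recovers the assertion "$\psi(Z_n)$ is analytically dense in $W_n$" which the paper states without elaboration; with it your proof is complete and matches the paper's.
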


\begin{proof}
As the period map for each special subvariety $Z_n$ of Shimura type is dominant, $\psi(Z_n)$ will necessary be analytically dense in $W_n$. Since $\psi^\prime$ is closed, we have $W_n\subset \psi^\prime(S^\prime)$, for all $n\in\NN$.  

Suppose that $W_\infty\backslash \psi^\prime(S^\prime)\neq\varnothing$. Let $x\in W_\infty\backslash \psi^\prime(S^\prime)$ and let $U_x$ be an open neighborhood of $x$ such that $U_x\cap \psi^\prime(S^\prime) = \emptyset$.
By the definition of support of measure, $\mu_\infty(U_x) > 0$. But $\pi_{h_n*}\mu_n(U_x) = 0$ for any $n\in\NN$, which contradicts to the convergence of the measures (Lemma \ref{convmeasure}).
\end{proof}

\textbf{Step 3. Show that $\Dcal_\infty^+$ admits a (unique) complex structure for which the canonical family of Hodge structures (that is, the family associated to the adjoint representation of $\Hbold^{\textrm{ad}}$ on the Lie algebra $\Lie(\Gbold)$) varies holomorphically.}\\

Fix any big enough integer $n$ such that $\Gbold^\textrm{der}_n\subset\Hbold$. Let $\Gbold_n = \Tbold_n\Gbold_n^{\text{der}}$ be the almost direct product decomposition of $\Gbold_n$, where $\Tbold_n$ is the connected center of $\Gbold_n$.

\begin{proposition} \label{normalizer0}
$\Tbold_n$ normalizes $\Hbold$.
\end{proposition}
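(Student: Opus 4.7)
The plan is to exploit two features: the compactness of $\Tbold_n(\RR)$ (a consequence of the non-factor hypothesis on $Z_n$), and the Mozes--Shah rigidity of the limit measure $\mu_F$. Together these should force conjugation by $\Tbold_n$ to preserve the $\QQ$-algebraic group $\Hbold$.

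First I would verify that $\Tbold_n(\RR)$ is compact. Since $\Gbold_n = \Tbold_n \Gbold_n^{\text{der}}$ is the almost direct product decomposition, the torus $\Tbold_n$ centralizes $\Gbold_n^{\text{der}}$, that is, $\Tbold_n \subset \Zcent_\Gbold(\Gbold_n^{\text{der}})$. By Proposition~\ref{centNF} applied to the non-factor special subvariety $Z_n$, we have $\Zcent_\Gbold(\Gbold_n^{\text{der}})(\RR) \subset M_{h_n}$, which is compact; hence $\Tbold_n(\RR)$ is compact. Moreover, since each $t \in \Tbold_n(\RR)$ fixes $h_n$ via its inclusion in $M_{h_n}$ and $h_n \to h$, continuity of the $\Gbold(\RR)$-action on $\Dcal^+$ gives $\Tbold_n(\RR) \subset M_h$ as well.

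Second, I would show $tFt^{-1} = F$ for every $t \in \Tbold_n(\RR)$. Consider the right-translation $R_t : \Omega \to \Omega$, which is a homeomorphism and acts continuously on $\Pscr(\Omega)$. For each $m \geq n_0$, the pushforward $R_t^*\mu_m$ is the canonical invariant probability measure on the closed orbit of $t^{-1}\Gbold_m^{\text{der}}(\RR)^+ t$ through $\G t$, hence lies in $\Qscr(\Omega)$. Continuity of $R_t$ yields $R_t^*\mu_m \to R_t^*\mu_F$ weakly. By Theorem~\ref{MS95}(1) the limit $R_t^*\mu_F$ is itself the canonical measure of some type-$\Hscr$ subgroup, which by matching supports $\supp(R_t^*\mu_F) = \G \bs \G F t = \G \bs \G t (t^{-1} F t)$ must be $t^{-1}Ft$; Theorem~\ref{MS95}(2) then gives $t^{-1}\Gbold_m^{\text{der}}(\RR)^+ t \subset t^{-1}Ft$ for $m \gg 0$, i.e.\ $\Gbold_m^{\text{der}}(\RR)^+ \subset F$, which is consistent with the original inclusion. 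The finer uniqueness underlying Mozes--Shah, together with the characterization of $F$ as the minimal type-$\Hscr$ subgroup containing $\Gbold_m^{\text{der}}(\RR)^+$ for all large $m$, then forces $t^{-1}Ft = F$. For $t \in \Tbold_n(\QQ)$ this gives $t\Hbold t^{-1} = \Hbold$ by passing to $\QQ$-Zariski closures, and the Zariski-density of $\Tbold_n(\QQ)$ in $\Tbold_n$ (automatic for a $\QQ$-torus) extends the equality to all of $\Tbold_n$.

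The main obstacle will be the identification $t^{-1}Ft = F$: weak convergence of the measures alone gives equality of the limit probability measures, but promoting this to equality of the underlying Lie subgroups requires the rigidity half of Theorem~\ref{MS95}. Should this step prove delicate, an alternative route is to work at the Lie-algebra level: the adjoint action of $\Tbold_n$ on $\Lie(\Gbold)$ commutes with the $\Gbold_n^{\text{der}}$-action; since $\Lie(\Hbold)$ is $\Gbold_n^{\text{der}}$-stable, it decomposes into $\Gbold_n^{\text{der}}$-isotypic components, and one argues via Schur's lemma together with the reductivity of $\Hbold$ (established earlier in Step 1 of the overall proof) that $\Tbold_n$ preserves each component, hence normalizes $\Hbold$.
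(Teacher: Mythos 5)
There is a genuine gap at the heart of the proposal, and the backup route you sketch is also insufficient.

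\textbf{The Mozes--Shah translation argument does not yield $t^{-1}Ft = F$.} You correctly observe that $R_t^*\mu_m \to R_t^*\mu_F$; but this is automatic from continuity of $R_t$ on $\Pscr(\Omega)$ and carries no new information. The support of $R_t^*\mu_F$ is $\G\bs\G Ft = \G\bs\G t(t^{-1}Ft)$, and while one can read off that $R_t^*\mu_F$ is the canonical measure of $t^{-1}Ft$ \emph{through the basepoint} $\G t$, there is nothing in Theorem~\ref{MS95} (or in the stronger form in \cite{MozesShah95}) that compares this to $\mu_F$ through $\G e$. The invoked ``characterization of $F$ as the minimal type-$\Hscr$ subgroup containing $\Gbold_m^{\mathrm{der}}(\RR)^+$ for all large $m$'' is not a statement of Mozes--Shah, and even if one grants some minimality of $F$, the same minimality would describe $t^{-1}Ft$ relative to $\G t$, not relative to $\G e$; these two extremal problems live over different basepoints and need not have conjugate answers. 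Concretely, the weak-$^*$ limit of a sequence of measures and its translate by $t$ are simply two different measures unless $t$ is already in a group that stabilizes the supports, which is exactly what is to be proved. There is also a minor issue earlier: you conclude $\Tbold_n(\RR)\subset M_h$ ``by continuity'' because $\Tbold_n(\RR)$ fixes $h_n$ and $h_n\to h$; but $n$ here is a \emph{fixed} index and $\Tbold_n(\RR)$ has no reason to fix the later terms $h_m$, so no limiting argument applies.

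\textbf{The Schur-lemma alternative is also incomplete.} It is true that $\Tbold_n$ acts on $\Lie(\Gbold)$ by $\Gbold_n^{\mathrm{der}}$-module automorphisms, hence preserves each $\Gbold_n^{\mathrm{der}}$-isotypic component. But $\Lie(\Hbold)$ intersects an isotypic component $W^{\oplus k}$ in some $W^{\oplus j}$ with $0<j<k$ in general, and this sub-sum corresponds to a proper subspace of the $k$-dimensional multiplicity space. $\Tbold_n$ acts linearly on the multiplicity space and there is no a priori reason for it to preserve that subspace; so ``preserves each isotypic component'' does not give ``preserves $\Lie(\Hbold)$.'' You would need to exploit the specific way $\Hbold$ is built (as the $\QQ$-Zariski closure of the limit group $F$, hence generated by the $\Gbold_m^{\mathrm{der}}$ for $m\gg0$) and the constraints coming from the horizontality of $\Dcal_\infty^+$, which is precisely what makes this proposition nontrivial.

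For comparison, the paper's argument proceeds very differently: it first proves (Lemma~\ref{normalizer1}) that for every $\QQ$-simple factor $\Bbold$ of $\Hbold$ there is a noncompact $\RR$-simple factor of $\Bbold_\RR$ normalized by $\alpha(\UU^1)$ for $\alpha\in\Dcal_n^+$ Hodge generic, using Mostow's result on self-adjoint groups, the horizontality of $\Dcal_\infty^+$, and compatibility of Cartan decompositions; it then reduces to the case where $\Hbold$ is minimal in the poset $\Sscr$ of type-$\Kscr$ semisimple $\QQ$-groups between $\Ebold=\Gbold_n^{\mathrm{der}}$ and $\Hbold$; and finally it argues by contradiction using rational approximation $a_m\to a$ in $\Tbold_n$ and the intersections $(a_m\Hbold a_m^{-1}\cap\Hbold)^\circ$ together with Sublemma~\ref{MTsimple} to show $\Bbold\subset\Ebold$, whence $\alpha(\UU^1)$ normalizes $\Hbold_\RR$. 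That chain of ideas does not appear in your proposal, and the shortcuts you offer in its place do not close the gap.
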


The proof of Proposition \ref{normalizer0} follows the same strategy as the proof of \cite{Ullmo07}, Theorem 3.15 in the Shimura variety case. It contains some differences since we are working in the non-classical setting.
We shall provide all the details in the next section.
\\

Let us proceed to finish step 3.
\\

Let $\Hbold_n$ be the algebraic subgroup of $\Gbold$ generated by $\Tbold_n$ and $\Hbold$. Then $\Hbold_n$ is a reductive $\QQ$-group. Let $X_n$ be the  $\Hbold_n(\RR)$-conjugacy class of $h_n$ and $X_n^+$ be the connected component of $X_n$ containing $h_n$. Then we have 
$$h_n: \SS\to\Gbold_{n,\RR}\to\Hbold_{n,\RR}\to\Gbold_\RR.$$

Let $C = h_n(\sqrt{-1})$. It is easy to see that the Killing form is a $C$-polarization for the faithful adjoint representation of $\Hbold_{n,\RR}^\textrm{ad}$ on the Lie algebra $\Lie(\Gbold)_\RR$. So $(\Hbold_n, X_n)$ is a Hodge subdatum of $(\Gbold, \Dcal)$. In particular, $X_n^+$ admits a unique complex structure for which the canonical family of Hodge structures varies holomorphically and this complex structure on $X_n^+$ is compatible with complex structure on $\Dcal^+$.
\\

By step 2, the holomorphic tangent bundle of $X_n^+$ is contained in the holomorphic horizontal tangent bundle of $\Dcal^+$, i.e., the canonical holomorphic family of Hodge structures on $X_n^+$ is a variation of Hodge structure; that is, satisfying Griffiths transversality condition:
$$F^{-1}\Lie(\Hbold_n)_\CC = \Lie(\Hbold_n)_\CC.$$
Since the Hodge structure $\Lie(\Hbold_n)$ is of weight $0$, it must be of type
$$\{(-1, 1), (0,0), (1,-1)\}.$$
Hence the subvarieties $S_n = \pi_{h_n}(\G\bs\G\Hbold)\cong(\G\cap{\Hbold_n(\RR)}_+)\bs X_n^+$ are special subvarieties of $\Hod_\G(S,\VV)$ of Shimura type, which are also of non-factor type as each of them contains a non-factor special subvariety $W_n$ for $n\gg 0$ respectively.

We thus obtained a sequence of probability measures $(\pi_{h_n})_*\mu_F$ with support $S_n$, which obviously converges to $\mu_\infty = (\pi_h)_*\mu_F$.

\begin{lemma}
	The sequence $(S_n)$ stabilizes as $n$ tends to $\infty$. In particular, we have $W_\infty = S_n$ for any big enough $n$.
\end{lemma}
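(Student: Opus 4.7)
The plan is to prove $S_n=W_\infty$ for $n\gg 0$; the stabilization of the sequence then follows. As a preliminary observation, since $\Tbold_n$ is central in $\Gbold_n$, the composition $\ad\circ h_n$ is trivial on $\Tbold_n(\RR)$, so $\Tbold_n(\RR)\subseteq M_{h_n}$ and therefore $S_n=\pi_{h_n}(\G\bs\G\Hbold)$ coincides with the $F$-orbit $\G\bs\G F\cdot h_n$ through $[h_n]$, where $F=\Hbold(\RR)^+$; likewise $W_\infty=\G\bs\G F\cdot h$. Thus both $S_n$ and $W_\infty$ are closed $F$-orbits of the same dimension in $\G\bs\Dcal^+$, through $[h_n]$ and $[h]$ respectively, and the lemma reduces to showing $h_n\in F\cdot h$ inside $\Dcal^+$ for $n$ large: then writing $h_n=f_n\cdot h$ with $f_n\in F$ gives $\G\bs\G F h_n=\G\bs\G F h$, i.e.\ $S_n=W_\infty$.

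I would first apply Mozes--Shah (Theorem~\ref{MS95}~(2)) to the convergence $\mu_n\to\mu_F$ in $\Qscr(\Omega)$, concluding $\G\bs\G\Gbold_n^{\text{der}}(\RR)^+\subseteq\G\bs\G F$ for $n\gg 0$; pushing down via $\pi_{h_n}$ then yields $W_n\subseteq S_n$. To upgrade this to the statement $h_n\in F\cdot h$, I would combine Lemma~\ref{convmeasure} (which gives $\mu_{S_n}=(\pi_{h_n})_\ast\mu_F\to(\pi_h)_\ast\mu_F=\mu_{W_\infty}$ weakly) with the facts that $\mu_{S_n}$ and $\mu_{W_\infty}$ are canonical measures on equidimensional $F$-orbits and that $h_n\to h$ inside the compact fundamental domain $\Fcal$. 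The non-factor hypothesis is crucial here: via Proposition~\ref{centNF} one has $\Zcent_\Gbold(\Hbold)(\RR)\subseteq M_{h_n}$ for all large $n$ and $\Zcent_\Gbold(\Hbold)(\RR)\subseteq M_h$ in the limit, furnishing the rigidity that forces $h_n$ to remain on the $F$-orbit of $h$ rather than drifting transversely.

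The main obstacle will be to rigorously derive $h_n\in F\cdot h$ from the measure convergence, since weak convergence of $\mu_{S_n}$ to $\mu_{W_\infty}$ in $\G\bs\Dcal^+$ does not by itself imply equality of supports. The cleanest route is to lift the analysis back to $\Omega$, where Mozes--Shah applies directly: choose lifts $\alpha_n\in\Gbold(\RR)^+$ of $h_n$ with $\alpha_n\to\alpha$ a lift of $h$ by property (c) of Step~1, and analyze the right translates $\G\bs\G F\alpha_n$ of the fixed orbit $\G\bs\G F$. The Mozes--Shah support-containment property applied to the appropriately lifted measures, combined with proper discontinuity of $\G$ on $\Gbold(\RR)^+$ and the non-factor rigidity, forces these translates to coincide for $n\gg 0$, which gives $\alpha_n\in\G F\alpha$ and hence $h_n\in F\cdot h$ in $\Dcal^+$; the conclusion $S_n=W_\infty$ then follows from the preliminary reduction.
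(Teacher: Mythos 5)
The reduction you set up is essentially the paper's: write $S_n = \pi_{h_n}(\G\bs\G F) = \G\bs\G F\cdot h_n$ with $F=\Hbold(\RR)^+$, use Mozes--Shah's support-containment to get $W_n\subset S_n$ for $n\gg 0$, and aim to prove $S_n=W_\infty$ for $n$ large. That much is correct. But the way you propose to close the argument does not go through, and the obstacle is exactly the one you flag and then fail to remove.

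Your plan is to conclude $h_n\in F\cdot h$ from the weak convergence $(\pi_{h_n})_*\mu_F\to(\pi_h)_*\mu_F$ by lifting to $\Omega$, considering right translates $\G\bs\G F\alpha_n$ and invoking Mozes--Shah once more, reinforced by a vague ``non-factor rigidity.'' There are two problems. First, the translated measures are not in $\Qscr(\Omega)$: the group $\alpha_n^{-1}F\alpha_n$ for an arbitrary lift $\alpha_n\in\Gbold(\RR)^+$ of $h_n$ is not (the connected component of the real points of) a $\QQ$-subgroup, so Theorem~\ref{MS95} simply does not apply to those translates. Second, even granting that some version of Mozes--Shah applied, what it delivers is eventual \emph{containment} of supports, $\supp\mu_{S_n}\subset W_\infty$; it never delivers equality, and in fact $\supp\mu_{S_n}\subset W_\infty$ is not even true a priori since the $F$-orbits $F h_n$ and $F h$ could sit as parallel, transversal slices near $h$. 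Nothing in your proposal pins down the transversal degree of freedom. Invoking Proposition~\ref{centNF} gives $\Zcent_\Gbold(\Hbold)(\RR)\subset M_{h_n}$, which is useful (it rules out a split ``product'' direction), but it does not by itself force $h_n$ onto the $F$-orbit of $h$.

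The paper's argument is genuinely different and is the piece you are missing. It exploits \emph{horizontality} of $\Dcal_\infty^+$: since $S_n$ is horizontal, the stabilizer of $h_n$ in $F$ is not just a compact subgroup of $F$ but the full maximal compact $F\cap K_n$, where $K_n$ is the maximal compact of $\Gbold(\RR)$ containing $M_n=M_{h_n}$. Maximal compacts of $F$ are all $F$-conjugate; writing $h_n=g_n h_{n_0}$ (so $K_n=g_nK_{n_0}g_n^{-1}$) one finds $v_n\in F$ with $(v_ng_n)K_{n_0}(v_ng_n)^{-1}\cap F=K_{n_0}\cap F$, and the paper then argues that $v_ng_n$ normalizes $K_{n_0}$ and hence lies in $K_{n_0}$. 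This forces $g_n\in F K_{n_0}$, whence $F h_n = F x h_{n_0}$ for some $x\in K_{n_0}$ and finally $S_n=S_{n_0}$. Once the sequence $(S_n)$ is stationary, the already-established weak convergence $(\pi_{h_n})_*\mu_F\to\mu_\infty$ identifies $\mu_\infty$ with the common measure, so $W_\infty=\supp\mu_\infty=S_{n_0}$. Your proposal never uses horizontality, never introduces the maximal compact subgroups $K_n$, and never articulates the normalizer rigidity; these are precisely the ingredients that make the stabilization work, so the gap is real and not just a matter of omitted detail.
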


\begin{proof}
	Note that for $n\gg 0$, we have
	$$S_n\cong \Gamma\cap \Hbold(\RR)^+\bs \Hbold(\RR)^+/\Hbold(\RR)^+\cap M_n,$$
	where $M_n$ is the stabilizer of $h_n$ in $\Gbold(\RR)$. Let $K_n$ be the unique maximal compact subgroup of $\Gbold(\RR)$ containing $M_n$. Since $S_n$ are horizontal, we have
	$$\Hbold(\RR)^+\cap M_n = \Hbold(\RR)^+\cap K_n.$$
	
	Since $S_n$ are locally symmetric spaces, the $K_n\cap\Hbold(\RR)^+$ are maximal compact subgroups of $\Hbold(\RR)^+$. In particular, they all conjugate to each other by elements of $\Hbold(\RR)^+$. Fix $n_0\gg 0$. For any $n\geq n_0$, there exists a $g_n\in\Gbold(\RR)$ such that $\pi_{g_nh_{n_0}}(\G\bs\G\Hbold(\RR)^+) = \pi_{h_n}(\G\bs\G\Hbold(\RR)^+)$. And hence there exists a $v_n\in\Hbold(\RR)^+$ such that
	$$g_n(K_{n_0})g_n^{-1}\cap\Hbold(\RR)^+ = v_n^{-1}(\Hbold(\RR)^+\cap K_{n_0})v_n.$$
	So $v_ng_n$ normalizes $K_{n_0}$ and hence are in $K_{n_0}$. Write $g_n = v_n^{-1}x$ for some $x\in K_{n_0}$, we have
	\begin{equation}\label{stable}
		S_n = \G\bs\G\Hbold(\RR)^+v_n^{-1}xh_{n_0} = \G\bs\G\Hbold(\RR)^+xh_{n_0} = S_{n_0}.
	\end{equation}
	The last equality of (\ref{stable}) is because $\Hbold(\RR)^+\cap xM_{n_0}x^{-1}$ is a maximal compact subgroup of $\Hbold(\RR)^+$ and
	$$ \Hbold(\RR)^+\cap xM_{n_0}x^{-1} \subset \Hbold(\RR)^+\cap xK_{n_0}x^{-1} = \Hbold(\RR)^+\cap K_{n_0} = \Hbold(\RR)^+\cap M_{n_0}.$$
	\end{proof}

The last statement of Theorem \ref{Equidis:MT} is then clear.

\subsection{Proof of Theorem \ref{NF:finite}}
We argue by contradiction. Suppose that $S$ contains infinitely many distinct non-factor special subvarieties that are of Shimura type with dominant period maps and all are maximal among such kind of special subvarieties. Choose any sequence $(Z_n)_{n\in\NN}$ in the set of such kind of special subvarieties. Let $(W_n)_{n\in\NN}$ be the corresponding sequence of non-factor special subvariety of $\Hod^\circ_\G(S, \VV)$  and let $\mu_{W_n}$ be the canonical Borel probability measure on $\Hod^\circ_\G(S, \VV)$ with $\supp(\mu_{W_n})=W_n$. By possibly passing to a subsequence, Theorem \ref{Equidis:MT} tells us that $\mu_{W_n}$ is weakly convergent to $\mu_\infty$ and $W_n\subset W_\infty, n\gg0$ for a non-factor special subvariety $W_\infty$ of $\Hod^\circ_\G(S, \VV)$ of Shimura type. Hence $Z_n$ is contained in some positive irreducible component of $\psi^{-1}(W_\infty) $ for $k\gg0$. Since $\psi^{-1}(W_\infty)$ has only finitely many irreducible components by Theorem \ref{BKT} and the irreducible components containing some $Z_n$ are non-factor special subvarieties $S$ of Shimura type, by maximality of $Z_n$, we deduce that the there are only finitely many possibilities for $Z_n$ when $n$ is big enough, which is a contradiction.

If $(S, \VV)$ is not of Shimura type, the union of all non-factor special subvarieties of Shimura type with dominant period maps is a proper closed subvariety of $S$, which contradicts to the assumption. So $\Hod^\circ_\G(S,\VV)$ is a connected Shimura variety.

If the period map $\psi: S\to\Hod^\circ_\G(S,\VV)$ is not dominant, then by \cite{Ullmo07} Theorem 1.3, the union $U$ of non-factor special subvarities of $\Hod^\circ_\G(S,\VV)$ contained in $\overline{\psi(S)}^\text{Zar} = \psi^\prime(S^\prime)$ is a proper closed subvariety of $\overline{\psi(S)}^\text{Zar}$. Hence the preimage $\psi^{-1}(U)$ is a proper closed subvariety of $S$ containing all non-factor special subvarieties of $S$, which again contradicts to the assumption.
And we finish the proof of Theorem \ref{NF:finite}.

\section{Proof of the Proposition \ref{normalizer0}} \label{normalizer}

Fix an arbitrary $n\gg 0$ and write $\Ebold := \Gbold_n^\textrm{der}$, $\Tbold:=\Tbold_n$. We assume that $\Ebold$ is strictly contained in $\Hbold$, otherwise there is nothing to show.

\begin{lemma} \label{normalizer1}
For any $\QQ$-simple factor $\Bbold$ of $\Hbold$, there exists a noncompact $\RR$-simple factor $\Lbold_\RR$ of $\Bbold_\RR$ which is normalized by $\alpha(\UU^1)$, where $\alpha\in \Dcal_n^+$ is a Hodge generic point and $\UU^1$ is the circle subgroup of $\SS$. And if $\Bbold$ is a $\QQ$-simple factor of $\Hbold$ such that the projections of $\Ebold_\RR$ to all noncompact $\RR$-simple factors of $\Bbold_\RR$ are surjective, then $\alpha(\UU^1)$ normalizes $\Bbold_\RR$.
\end{lemma}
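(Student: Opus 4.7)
The plan is to decompose the conjugation action of $\alpha(\UU^1) \subset \Gbold_n(\RR)$ on $\Hbold_\RR$ along the almost direct product $\Gbold_n = \Tbold \cdot \Ebold$, exploiting that $(\Gbold_n, \Dcal_n^+)$ is a Shimura datum. First I would write $\alpha|_{\UU^1} = \tau \cdot e$ with $\tau: \UU^1 \to \Tbold_\RR$ central and $e: \UU^1 \to \Ebold_\RR$. Then $\tau(\UU^1)$ commutes with $\Ebold_\RR$, while $e(\UU^1) \subset \Ebold(\RR) \subset \Hbold(\RR)$ acts on $\Hbold$ by inner automorphisms. By connectedness of $\UU^1$ and discreteness of the set of $\QQ$-simple (resp.\ $\RR$-simple) factors, $e(\UU^1)$ preserves every $\QQ$-simple factor $\Bbold$ of $\Hbold$ and every $\RR$-simple factor of $\Bbold_\RR$.

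Now fix a $\QQ$-simple factor $\Bbold$ with decomposition $\Bbold_\RR = \prod_j \Lbold_j$ into $\RR$-simple factors, and consider the projection $p_\Bbold: \Hbold_\RR \to \Bbold_\RR$ along the other $\QQ$-simple factors of $\Hbold$. The image $p_\Bbold(\Ebold_\RR)$ is $\RR$-semisimple and cannot be contained in the compact part of $\Bbold_\RR$ (since $\Ebold$ is of non-compact type), so it surjects onto some non-compact $\Lbold = \Lbold_\RR$ by $\RR$-simplicity. The $e(\UU^1)$-part normalizes $\Lbold$ by the previous paragraph; to see that $\tau(\UU^1)$ also does, I would use the commutation $[\tau(\UU^1),\Ebold_\RR] = 1$: the conjugate $\tau(t)\Lbold\tau(t)^{-1}$ is an $\RR$-simple subgroup of $\Gbold(\RR)$ receiving a surjection from $\Ebold_\RR$ (namely the $\tau$-twist of the projection $p_\Lbold$), and within the $\RR$-simple decomposition of $\Hbold_\RR$ this uniquely identifies it with $\Lbold$ itself. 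This proves the first assertion.

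For the second assertion, if $\Ebold_\RR$ surjects onto every non-compact $\Lbold_j$, the first part shows $\alpha(\UU^1)$ normalizes every non-compact factor and hence the non-compact part $\Bbold_\RR^{\text{nc}}$. The compact part $\Bbold_\RR^{\text{c}}$ is characterized intrinsically as the maximal connected compact normal $\RR$-subgroup of $\Bbold_\RR$ complementary to $\Bbold_\RR^{\text{nc}}$, so it is preserved as well, yielding normalization of $\Bbold_\RR$. The principal obstacle is the step controlling the action of $\tau(\UU^1)$ on the single factor $\Lbold$: in the classical Shimura case treated by Ullmo (Theorem 3.15), the Hodge-theoretic conditions on $(\Gbold, \Dcal)$ directly rule out permutations of $\RR$-simple factors by central elements, while here $(\Gbold, \Dcal)$ is only a Hodge datum, so one must instead leverage the commutation $[\tau, \Ebold_\RR] = 1$ together with the surjectivity of $p_\Lbold|_{\Ebold_\RR}$ to exclude a non-trivial permutation of non-compact factors by $\tau$.
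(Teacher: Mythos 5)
Your decomposition $\alpha|_{\UU^1} = \tau\cdot e$ along the almost direct product $\Gbold_n = \Tbold\cdot\Ebold$ is a genuinely different opening move from the paper's. The paper instead picks $u\in\UU^1$ of infinite order, forms the decreasing chain $\Bbold_0 = \Hbold_\RR$, $\Bbold_m = (\Bbold_{m-1}\cap\alpha(u)\Bbold_{m-1}\alpha(u)^{-1})^\circ$, which stabilizes at a subgroup $\Bbold_\infty \supset \Ebold_\RR$ normalized by $\alpha(\UU^1)$, and then proves $\Lbold_\RR\cap\Bbold_\infty = \Lbold_\RR$ in a sublemma that invokes Mostow's theorem on self-adjoint groups to align Cartan decompositions and, crucially, the horizontality of $\Dcal^+_\infty$ (Step~2 of the proof of Theorem~\ref{Equidis:MT}) to get $K\cap\Hbold(\RR) = M\cap\Hbold(\RR)$. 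Your $e$-part observation (that $e(\UU^1)\subset\Hbold(\RR)^+$ acts by inner automorphisms and hence preserves each $\QQ$-simple and $\RR$-simple factor, by connectedness) is correct and in the same spirit.

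The gap is in the $\tau$-part, and it is genuine. The claim that the commutation $[\tau(\UU^1),\Ebold_\RR]=1$ together with surjectivity of $p_\Lbold|_{\Ebold_\RR}$ ``uniquely identifies'' $\tau(t)\Lbold\tau(t)^{-1}$ with $\Lbold$ is false as stated. First, you have not shown that $\tau(t)\Lbold\tau(t)^{-1}$ lies inside $\Hbold_\RR$ at all, so there is no ``$\RR$-simple decomposition of $\Hbold_\RR$'' to appeal to. Second, even granting that, the conclusion fails: take $\Lbold_1\times\Lbold_2 \subset \Hbold_\RR$ with $\Ebold_\RR$ embedded diagonally; then the swap element centralizes $\Ebold_\RR$ and $\Ebold_\RR$ surjects onto both factors via $p_{\Lbold_1}$ and its twist, yet the swap carries $\Lbold_1$ to $\Lbold_2$. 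Group-theoretic commutation alone cannot pin down $\Lbold$. You correctly flag this as ``the principal obstacle,'' but your final sentence names the problem rather than solving it: no argument is given to exclude the permutation. The Hodge-theoretic input the paper uses to close exactly this gap --- the horizontality of $\Dcal^+_\infty$, propagated through nested Cartan decompositions via Mostow --- is entirely absent from your proposal, and without some such input (or something playing its role) the first assertion of the lemma does not follow. The derivation of the second assertion from the first is then also left hanging, since it inherits the missing normalization of $\Lbold_\RR$.
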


\begin{proof}
For the first statement, it suffices to find an element $u\in\UU^1$ of infinite order such that $\alpha(u)$ normalizes a $\RR$-simple factor $\Lbold_\RR$ of $\Bbold_\RR$.
Let $u$ be any element of $\UU^1$ of infinite order and we construct a decreasing sequence $(\Bbold_n)_{n\in\NN}$ of $\RR$-algebraic subgroups of $\Hbold$ inductively as follows:
$$\Bbold_0 = \Hbold_\RR$$
and for $n\geq 1$
$$\Bbold_n = (\Bbold_{n-1}\cap \alpha(u)\Bbold_{n-1}\alpha(u)^{-1})^\circ.$$
Note that $\Ebold_\RR \subset \Bbold_n$ for any $n\geq 0$. So the sequence $\Bbold_n$ must be stable by dimension reason. We denote the limit by $\Bbold_\infty$. By construction, the limit is normalized by $\alpha(u)$ hence also normalized by $\alpha(\UU^1)$.

Let $\Bbold$ be a $\QQ$-simple factor of $\Hbold$. Since $\Bbold$ is $\RR$-isotropic and $\Zcent_\Gbold(\Ebold)(\RR)$ is compact, the projection of $\Ebold$ to $\Bbold$ is nontrivial. Let $\Abold$ be a $\QQ$-simple factor of $\Ebold$ such that the projection of $\Abold$ to $\Bbold$ is nontrivial. Let $\Fbold_\RR$ be a noncompact $\RR$- simple factor of $\Abold_\RR$, then there exists a noncompact $\RR$-simple factor $\Lbold_\RR$ of $\Bbold_\RR$ such that the projection of $\Fbold_\RR$ to $\Lbold_\RR$ is nontrivial. Since $\alpha(\UU^1)$ normalizes $\Fbold_\RR$, the image of the projection is contained in $\Lbold_\RR\cap \Bbold_\infty$ which is thus noncompact. By the following Sublemma \ref{Sublem}, we conclude the first statement.

\begin{sublemma} \label{Sublem}
$\Lbold_\RR\cap\Bbold_\infty = \Lbold_\RR$.
\end{sublemma}
\begin{proof}
Since $\Int(\alpha(\sqrt{-1})$ is a Cartan involution of $\Gbold_\RR$ and fixes $\Ebold_\RR$ and $\Bbold_\infty$, we have Cartan decompositions:
\begin{eqnarray}
\Gbold(\RR) &=& P K,\\
\Ebold(\RR) &=& (P\cap\Ebold(\RR)) (K\cap\Ebold(\RR)), \label{Car1}\\
\Bbold_\infty(\RR) &=& (P\cap\Bbold_\infty(\RR)) (K\cap\Bbold_\infty(\RR)),
\end{eqnarray}
where $K = Z_{\Gbold(\RR)}(\alpha(\sqrt{-1}))$.
Let $M$ be the stablizer of $\alpha$ in $\Gbold(\RR)$, then we have
$$\Zcent_\Gbold(\Ebold)(\RR)\subset M\subset K.$$

By a result of Mostow \cite{Mostow55} on self-adjoint groups, for the inclusion of subgroups 
$$\Ebold_\RR\subset \Hbold_\RR\subset\Gbold_\RR$$
there exists a $g\in\Gbold(\RR)$ such that we have Cartan decompositions:
\begin{eqnarray}
\Ebold(\RR) &=& (gPg^{-1}\cap\Ebold(\RR)) (gKg^{-1}\cap\Ebold(\RR)),\label{Car2}\\
\Hbold(\RR) &=& (gPg^{-1}\cap\Hbold(\RR))(gKg^{-1}\cap\Hbold(\RR)).
\end{eqnarray}

As $\Ebold(\RR)$ admits two Cartan decompositions (\ref{Car1}) and (\ref{Car2}), they are related by an inner automorphism of $\Ebold(\RR)$, i. e., there exists $t\in\Ebold(\RR)$ such that 
\begin{eqnarray*}
gPg^{-1}\cap\Ebold(\RR) &=& t(P\cap\Ebold(\RR))t^{-1},\\
gKg^{-1}\cap\Ebold(\RR) &=& t(K\cap\Ebold(\RR))t^{-1}.
\end{eqnarray*}
Let $\g:=t^{-1}g$, then we have
\begin{eqnarray}
\g P\g^{-1}\cap\Ebold(\RR) &=& P\cap\Ebold(\RR),\label{Comp1}\\
\g K\g^{-1}\cap\Ebold(\RR) &=& K\cap\Ebold(\RR).\label{Comp2}
\end{eqnarray}

Write $\g = pk$ with $p\in P$ and $k\in K$. For any $p_1\in P\cap\Ebold(\RR)$, there exists a $p_2 \in P$ such that $p_2 = \g^{-1}p_1\g$. So $p^{-1}p_1p = kp_1k^{-1}\in P$, which implies that $p^2p_1 = p_1p^2$, i.e.
$$p^2\in Z_{\Gbold(\RR)}(P\cap\Ebold(\RR)).$$
Similarly, we can show that $$p^2\in Z_{\Gbold(\RR)}(K\cap\Ebold(\RR)).$$ So $p^2\in Z_{\Gbold(\RR)}(\Ebold(\RR))\subset K$ which implies $p = 1$ and $\g\in K$. Hence we have
$$gKg^{-1}\cap\Hbold(\RR) = tKt^{-1}\cap\Hbold(\RR) = t(K\cap\Hbold(\RR))t^{-1}.$$ 
And thus we have Cartan decompositions of $\Hbold(\RR)$ and $\Lbold_\RR(\RR)$:
\begin{eqnarray*}
\Hbold(\RR) &=& (P\cap\Hbold(\RR))(K\cap\Hbold(\RR)),\\
\Lbold_\RR(\RR) & = & (P\cap\Lbold_\RR(\RR))(K\cap\Lbold_\RR(\RR)).
\end{eqnarray*}

Since $\Dcal^+_\infty$ is "horizontal" as showed by Step 2 in the proof of Theorem \ref{Equidis:MT}, we can deduce that $K\cap\Hbold(\RR) = M\cap\Hbold(\RR)$. In particular,
$$\alpha(u)(K\cap\Hbold(\RR))\alpha(u)^{-1} = K\cap\Hbold(\RR),$$
which implies that 
$$K\cap\Bbold_\infty(\RR) = K\cap\Hbold(\RR).$$

Therefore 
$$K\cap\Lbold_\RR(\RR)\subset\Bbold_\infty(\RR)\cap\Lbold_\RR(\RR)\subset\Lbold_\RR(\RR).$$

Since $\Lbold_\RR(\RR)$ is simple and noncompact, the subgroup $K\cap\Lbold_\RR(\RR)$ is a maximal proper closed subgroup of $\Lbold_\RR(\RR)$. Hence we have 
$$\Bbold_\infty(\RR)\cap\Lbold_\RR(\RR)=\Lbold_\RR(\RR).$$
This finishes the proof of the sublemma.
\end{proof}

Now let us prove the second statement of Lemma \ref{normalizer1}. Let $\Bbold$ be a $\QQ$-simple factor of $\Hbold$ such that the projections of $\Ebold_\RR$ to all noncompact $\RR$-simple factors of $\Bbold_\RR$ are surjective.
If $\Lbold_\RR$ is a compact $\RR$-simple factor of $\Bbold_\RR$, then $\Lbold_\RR = K\cap\Lbold_\RR = M\cap\Lbold_\RR$ and hence $\Lbold_\RR$ is normalized by $\alpha(\UU^1)$. If $\Lbold_\RR$ is a noncompact $\RR$-simple factor of $\Bbold_\RR$, by assumption the projection of $\Ebold$ to $\Lbold_\RR$ is surjective, from which we deduce that $\Bbold_\infty(\RR)\cap\Lbold_\RR(\RR)$ is noncompact, and thus equals to $\Lbold_\RR(\RR)$. So $\Lbold_\RR$ is again normalized by $\alpha(\UU^1)$. Therefore $\Bbold_\RR$ is normalized by $\alpha(\UU^1)$ and we finish the proof of Lemma \ref{normalizer1}.
\end{proof}

Let $\Sscr$ be the poset
$$\Sscr = \{\Fbold\subset\Gbold | \Fbold~\text{is a semisimple}~\QQ\text{-subgroup of type}~\Kscr~\text{and}~ \Ebold\subsetneq\Fbold\subset\Hbold\}$$
with the partial oder given by inclusion.

\begin{lemma}
In order to prove Proposition \ref{normalizer0}, it suffices to assume that $\Hbold$ is a minimal element of $\Sscr$.
\end{lemma}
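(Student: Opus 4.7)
The approach would be a Noetherian induction on the poset $\Sscr$, using the fact that descending chains of $\QQ$-algebraic subgroups of $\Gbold$ stabilize. The base case is precisely the hypothesis of the lemma: $\Hbold$ minimal in $\Sscr$ implies the conclusion of Proposition \ref{normalizer0}. So suppose $\Hbold$ is not minimal and that we have established the proposition for every $\Fbold \in \Sscr$ strictly contained in $\Hbold$.

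Choose a minimal element $\Fbold \in \Sscr$ with $\Ebold \subsetneq \Fbold \subsetneq \Hbold$. By the base-case hypothesis (applied with $\Fbold$ playing the role of the minimal group), $\Tbold$ normalizes $\Fbold$. Form the reductive $\QQ$-subgroup $\Gbold_n' := \Tbold\cdot\Fbold$, which contains $\Gbold_n = \Tbold\cdot\Ebold$ and has $\Fbold$ as its derived subgroup. Since $\Tbold$ is central in $\Gbold_n$ and now normalizes $\Fbold$, the almost direct product structure is well-defined, and the $\Gbold_n'(\RR)_+$-conjugacy class of $h_n$ yields a Hodge subdatum $(\Gbold_n', X_n')$ of $(\Gbold,\Dcal^+)$ satisfying HD $0$--HD $3$ (the axioms transfer from $(\Gbold_n, \Dcal_n^+)$ by the computation of the Hodge types in $\Lie(\Fbold)$, already available since $\Fbold$ is semisimple of type $\Kscr$ and contains the already-Hodge $\Ebold$). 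The corresponding special subvariety $W_n'$ of $\Hod^\circ_\G(S,\VV)$ is non-factor, of Shimura type with dominant period map, contains $W_n$, and is contained in $W_\infty$.

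The idea is now to replace the original sequence $(Z_n, W_n)_{n \gg 0}$ by $(Z_n', W_n')$ in the entire equidistribution argument of Section \ref{mainresult}. The limit Borel probability measures still converge to $\mu_\infty$ (the old $\mu_n$ are supported in $\supp \mu_{W_n'}$, which in turn are supported in $W_\infty$), and $\Hbold$ remains the smallest $\QQ$-algebraic subgroup whose real points contain the Mozes--Shah limit. In this new setup the role of $\Ebold$ is now played by $\Fbold$, and the associated poset $\Sscr'$ of intermediate groups between $\Fbold$ and $\Hbold$ is strictly smaller than $\Sscr$ (since $\Fbold$ was chosen minimal, every strictly smaller element of $\Sscr$ no longer appears). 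Iterating this enlargement, after finitely many steps we reach a situation in which $\Hbold$ is minimal in the corresponding poset, at which point Proposition \ref{normalizer0} holds by the base-case hypothesis, giving $\Tbold$ normalizes $\Hbold$ as desired.

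The main obstacle I anticipate is the bookkeeping required to verify, at each enlargement step, that the newly chosen $\Gbold_n'$ genuinely qualify as the generic Mumford--Tate groups of non-factor Shimura-type special subvarieties with dominant period maps, and that the limit group $\Hbold$ is unaffected. This is essentially a consistency check on the Mozes--Shah argument together with Proposition \ref{centNF} to ensure the non-factor hypothesis persists after enlarging $\Ebold$ to $\Fbold$; once this is in hand, the induction runs without further complication.
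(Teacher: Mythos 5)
Your core strategy is the paper's: pick a minimal $\Fbold\in\Sscr$, invoke the minimal case to get $\Tbold$ normalizing $\Fbold$, form the reductive group $\Fbold'=\Tbold\cdot\Fbold$ (which in the paper splits as $\Tbold'\Fbold$ with $\Tbold'$ the part of $\Tbold$ centralizing $\Fbold$) together with its Mumford--Tate subdomain $\Dcal'$, and iterate, the poset strictly shrinking at each step. The one place your write-up diverges substantively --- and it is exactly the point you flag as an anticipated obstacle --- is the proposal to re-run the entire equidistribution argument of Section~\ref{mainresult} with the enlarged sequence $(W_n')$ and then argue that ``$\Hbold$ remains the Mozes--Shah limit.'' That detour conflates two different indices: Proposition~\ref{normalizer0} is stated and proved for a single fixed $n\gg0$, whereas Mozes--Shah is a statement about the whole sequence, and if one really did replace $(W_n)$ by the enlarged $(W_n')$ there is no a priori reason the resulting limit group equals the original $\Hbold$ (a limit of bigger orbits could a priori be bigger). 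The paper sidesteps this entirely: it never re-derives $\Hbold$. It keeps $\Hbold$ and $\Dcal_\infty^+$ fixed and simply checks that the only properties the minimal-case proof actually uses --- horizontality of $\Dcal_\infty^+$ (inherited by $\Dcal'^{+}\subset\Dcal_\infty^+$), compactness of $\Zcent_\Gbold(\Fbold)(\RR)$ (forced by $\Ebold\subset\Fbold$), and the fact that $\Fbold'$ is the generic Mumford--Tate group of a Shimura subdatum $(\Fbold',\Dcal')$ --- persist after replacing $(\Ebold,\Tbold,\Gbold_n,\Dcal_n)$ by $(\Fbold,\Tbold',\Fbold',\Dcal')$. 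Once you drop the equidistribution re-run and instead verify those three stable hypotheses directly, your proof becomes the paper's.
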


\begin{proof}
Let $\Fbold$ be a minimal element of $\Sscr$. By assumption, $\Tbold$ normalizes $\Fbold$. We have an almost direct product decomposition $\Tbold = (\Tbold\cap\Fbold)\Tbold^\prime$ with $\Tbold^\prime$ centralizes $\Fbold$. The algebraic subgroup $\Fbold^\prime$ generated by $\Fbold$ and $\Tbold$ is reductive and has an almost direct product decomposition $\Fbold^\prime = \Tbold^\prime\Fbold$ $(\Fbold = \Fbold^{\prime\textrm{der}})$. Let $\Dcal^\prime$ be the $\Fbold^\prime(\RR)$-conjugacy class of $\alpha$. Note that $\Dcal^{\prime +}$ is automatically "horizontal" as it is contained in $\Dcal_\infty^+$. Hence by the same reasoning as in the last part of the proof of Theorem \ref{Equidis:MT}, $(\Fbold^\prime, \Dcal^\prime)$ is a Shimura datum. It is easy to see that $\Fbold^\prime$ is the generic Mumford-Tate group of $\Dcal^\prime$. Let $\alpha^\prime$ be a Hodge generic point of $\Dcal_n^\prime$ and replace the Shimura datum $(\Gbold, \Dcal)$ by $(\Fbold^\prime, \Dcal^\prime)$, $\Tbold$ by $\Tbold^\prime$ and $\alpha$ by $\alpha^\prime$. We thus reduce the proof of Proposition \ref{normalizer0} to the inclusion $\Fbold\subset\Hbold$. And after iterating the above procedure finitely many times, the Proposition \ref{normalizer0} follows.
\end{proof}

We suppose now $\Hbold$ is minimal in the set $\Sscr$ and proceed to finish the proof of the Proposition \ref{normalizer0}.
\\

Let $\Tbold^\prime = \Tbold\cap\Nor_\Gbold(\Hbold)^\circ$ and write $\Tbold = \Tbold^\prime\Tbold^{\prime\prime}$ as an almost direct product. Suppose that $\Tbold^{\prime\prime}$ is nontrivial. Note that $\alpha(\UU^1) = \alpha(\SS)$ is not contained in $\Tbold^\prime(\RR)\Ebold(\RR)$ as $\Gbold$ is adjoint and $\Gbold_n$ is the generic Mumford-Tate group of $\Dcal^+_n$. We can choose $b = ag\in \alpha(\UU^1)$ with $g\in\Ebold(\RR)$ and $a = a^\prime a^{\prime\prime}\in \Tbold(\RR)$ such that $a^\prime\in\Tbold^\prime(\RR), a^{\prime\prime}\in\Tbold^{\prime\prime}(\RR)$ and $a^{\prime\prime}\notin\Tbold^\prime(\RR)\cap\Tbold^{\prime\prime}(\RR)$.\\

Since $\Tbold^\prime(\QQ)$ (resp. $\Tbold^{\prime\prime}(\QQ)$) is dense in $\Tbold^\prime(\RR)$
 (resp.$\Tbold^{\prime\prime}(\RR)$) for the usual topology, we can find a sequence $(a_n = a^\prime_na^{\prime\prime}_n\in\Tbold(\QQ))_{n\in\NN}$ such that $a^\prime_n\in\Tbold^\prime(\QQ)$ (resp. $a^{\prime\prime}_n\in\Tbold^{\prime\prime}(\QQ))$ converges to $a^\prime$ (resp. $a^{\prime\prime}$). We can also assume that $a_n^{\prime\prime}\notin\Tbold^\prime(\RR)\cap\Tbold^{\prime\prime}(\RR)$,for all $n\in\NN$.\\

Now consider $\Hbold_n^\prime:=(a_n\Hbold a_n^{-1}\cap\Hbold)^\circ$. As $\Hbold_n^\prime$ contains $\Ebold$, it is reductive by \cite{EMS97} Lemma 5.1. We have an almost direct product decomposition 
$$\Hbold_n^\prime = \Hbold_{n}^{\text{nc}}~\Hbold^{\prime\prime}_n$$
where $\Hbold_{n}^{\text{nc}}$ is the almost direct product of $\QQ$-simple factors of noncompact type of $\Hbold_n^\prime$ and $\Hbold^{\prime\prime}_n$ is the almost direct product of the remaining factors, so in particular $\Hbold_{n}^{\text{nc}}\in\Sscr$. Note that the projection of $\Ebold$ to $\Hbold^{\prime\prime}_n$ is trivial, so
$$\Hbold^{\prime\prime}_n(\RR)\subset\Zcent_\Gbold(\Ebold)(\RR)\subset M\cap\Hbold(\RR)=K\cap\Hbold(\RR).$$
Therefore
$$\Ebold\subset\Hbold_{n}^{\text{nc}}\subset\Hbold.$$
By minimality of $\Hbold$ and $a_n^{\prime\prime}\notin\Tbold^\prime(\QQ)$, we have 
$\Hbold_{n}^{\text{nc}}=\Ebold$ and
$$\Hbold_n^\prime = \Ebold~\Hbold^{\prime\prime}_n.$$
Let $n$ tend to infinity, we deduce that every element of $(a\Hbold(\RR)a^{-1}\cap\Hbold(\RR))^+$ can be written as a product of an elements of $\Ebold(\RR)$ and an element of $M\cap\Hbold(\RR)$.\\

Let $\Bbold$ be a $\QQ$-simple factor of $\Hbold$, by Lemma \ref{normalizer1}, there exists a noncompact $\RR$-simple factor $\Lbold_\RR$ of $\Bbold_\RR$ which is normalized by $\alpha(\UU^1)$. Since $b\in \alpha(\UU^1)$ and $a\Hbold(\RR)a^{-1}\cap\Hbold(\RR) = b\Hbold(\RR)b^{-1}\cap\Hbold(\RR)$, we have
$$\Lbold_\RR(\RR)^+\subset (a\Hbold(\RR)a^{-1}\cap\Hbold(\RR))^+,$$
which implies that $\Ebold_\RR$ projects surjectively onto $\Lbold_\RR$. In particular, $\Lbold_\RR$ is also a $\RR$-simple factor of $\Ebold_\RR$.

\begin{sublemma} \label{MTsimple}
The smallest $\QQ$-algebraic subgroup $\Fbold$ of $\Gbold$ which contains $\Lbold_\RR$ is $\Bbold$.
\end{sublemma}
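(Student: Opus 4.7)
The inclusion $\Fbold\subset\Bbold$ is immediate, since $\Bbold$ is itself a $\QQ$-algebraic subgroup of $\Gbold$ containing $\Lbold_\RR$. For the reverse inclusion, my plan is to use Galois descent together with the $\QQ$-simplicity of $\Bbold$.

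First, I would decompose $\Bbold_{\overline{\QQ}}$ as an almost direct product $\Bbold_{\overline{\QQ},1}\cdots\Bbold_{\overline{\QQ},s}$ of its $\overline{\QQ}$-simple factors. The $\QQ$-simplicity of $\Bbold$ is equivalent to the absolute Galois group $\mathrm{Gal}(\overline{\QQ}/\QQ)$ acting transitively on the set $\{\Bbold_{\overline{\QQ},i}\}$, and the subgroup $\Fbold_{\overline{\QQ}}\subset\Bbold_{\overline{\QQ}}$ is $\mathrm{Gal}(\overline{\QQ}/\QQ)$-stable because $\Fbold$ is $\QQ$-defined. Thus, if I can exhibit a single factor $\Bbold_{\overline{\QQ},i_0}$ inside $\Fbold_{\overline{\QQ}}$, then Galois transitivity forces $\Fbold_{\overline{\QQ}}$ to contain every $\Bbold_{\overline{\QQ},\sigma(i_0)}$, hence all of them, hence their almost direct product $\Bbold_{\overline{\QQ}}$, which in turn forces $\Fbold=\Bbold$ by descent.

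To produce such an $i_0$, I would argue at the level of complex Lie algebras. Because $\Lbold_\RR$ is an $\RR$-simple factor of $\Bbold_\RR$, $\Lie(\Lbold_\RR)$ is a nonzero ideal of $\Lie(\Bbold_\RR)$, so $\Lie(\Lbold_\RR)\otimes_\RR\CC$ is a nonzero ideal of $\Lie(\Bbold)_\CC$: concretely it is either a single $\CC$-simple summand (if $\Lbold_\RR$ is absolutely simple over $\RR$) or a pair of complex-conjugate $\CC$-simple summands (if $\Lbold_\RR$ is a restriction of scalars of a complex simple group). In either case $\Lie(\Fbold)_\CC\supset\Lie(\Lbold_\RR)\otimes_\RR\CC$ contains at least one $\CC$-simple factor of $\Lie(\Bbold)_\CC$, and under the identification of $\CC$-simple factors of $\Lie(\Bbold)_\CC$ with base changes of $\overline{\QQ}$-simple factors of $\Lie(\Bbold)_{\overline{\QQ}}$, this yields $\Lie(\Bbold_{\overline{\QQ},i_0})\subset\Lie(\Fbold)_{\overline{\QQ}}$, whence $\Bbold_{\overline{\QQ},i_0}\subset\Fbold_{\overline{\QQ}}$ by connectedness of both groups.

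The only point requiring mild care is the last identification of $\CC$-simple factors of $\Lie(\Bbold)_\CC$ with base changes of $\overline{\QQ}$-simple factors of $\Lie(\Bbold)_{\overline{\QQ}}$ across the extensions $\QQ\subset\overline{\QQ}\subset\CC$; this is routine for reductive groups since the simple decomposition is stable under faithfully flat base change. Beyond that the argument is pure formal Galois descent, so I do not expect any genuine obstruction.
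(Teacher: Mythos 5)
Your proposal is correct but takes a genuinely different route from the paper. The paper's argument is short and self-contained: for any $h\in\Bbold(\QQ)$, the conjugate $h\Fbold h^{-1}$ is again a $\QQ$-subgroup of $\Gbold$ containing $\Lbold_\RR$ (because inner automorphisms preserve the $\RR$-simple factors of $\Bbold_\RR$, so $h\Lbold_\RR h^{-1}=\Lbold_\RR$), hence by minimality $\Fbold\subset h\Fbold h^{-1}$, and by dimension equality holds. Since $\Bbold(\QQ)$ is Zariski dense in the (unirational) group $\Bbold$, it follows that $\Fbold$ is a nontrivial $\QQ$-normal subgroup of the $\QQ$-simple group $\Bbold$, hence $\Fbold=\Bbold$. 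You instead pass to $\overline{\QQ}$ and invoke Galois transitivity on absolutely simple factors, matched with the $\CC$-simple decomposition of $\Lie(\Bbold)_\CC$ via flat base change. Both are valid. The paper's approach is more economical: it uses the defining minimality of $\Fbold$ directly to produce normality, never leaving the ground field; your approach is a bit heavier (it requires tracking the simple decomposition across three base fields and the Galois permutation action) but it makes the role of $\QQ$-simplicity as "Galois transitivity" explicit and isolates exactly which factor is forced in. One small caution in your version: you should state clearly that you are intersecting with $\Bbold$ throughout, i.e., the $\CC$-simple summand you find inside $\Lie(\Fbold)_\CC$ is a summand of $\Lie(\Bbold)_\CC$ (not merely of $\Lie(\Gbold)_\CC$), so that the descent step really lands you inside $\Bbold_{\overline{\QQ}}$; this is implicit once you have already fixed $\Fbold\subset\Bbold$ at the start.
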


\begin{proof}
Let $h\in\Bbold(\QQ)$. Then $\Lbold_\RR\subset h\Fbold_\RR h^{-1}$. Since $h\Fbold h^{-1}$ is a $\QQ$-subgroup of $\Gbold$, we have $\Fbold\subset h\Fbold_\RR h^{-1}$. So $\Fbold$ is a normal subgroup of $\Bbold$ and hence equals to $\Bbold$ as $\Bbold$ is $\QQ$-simple. 
\end{proof}

By Sublemma \ref{MTsimple}, $\Bbold$ is contained in $\Ebold$. In particular, the projection of $\Ebold_\RR$ to any noncompact $\RR$-simple factor of $\Bbold_\RR$ is surjective. By Lemma \ref{normalizer1}, $\alpha(\UU^1)$ normalizes $\Bbold_\RR$ and hence normalizes $\Hbold_\RR$, i. e.,
$$\alpha(\UU^1)\subset\Nor_\Gbold(\Hbold)_\RR.$$
And thererfore $$\Gbold_n\subset\Nor_\Gbold(\Hbold).$$

In particular, $\Tbold_n\subset\Nor_\Gbold(\Hbold)$ which contradicts to our assumption that $\Tbold_n^{\prime\prime}$ is nontrivial. This completes the proof of Proposition \ref{normalizer0}.

%Let $\Lbold_\RR^\prime$ be any noncompact $\RR$-simple factor of $\Bbold_\RR$, then the projection of $\Gbold_{n,\RR}^{\rm der}$ to $\Lbold_\RR^\prime$ is nontrivial. If the projection of $\Gbold_{n,\RR}^{\rm der}$ to $\Lbold_\RR^\prime$ is not surjective, then $\Lbold_\RR^\prime$ must contain its compact real form, which is impossible.


\begin{thebibliography}{99}

%\bibitem[BT18]{BakkerTsimerman18} B. Bakker, J. Tsimerman: 
%\textit{The Ax-Schanuel conjecture for variations of Hodge structures},
%arXiv:1712.05088.

\bibitem[An92]{An92} Y.~André:
\textit{Mumford-Tate groups of mixed Hodge structures and the theorem of the fixed part},
Compositio Math. 82 (1992), no. 1, 1–24.

\bibitem[AGHM18]{AGHM18} F.~Andreatta; E.~Goren; B.~Howard; K.~Madapusi Pera:
\textit{Faltings heights of abelian varieties with complex multiplication}, 
Ann. of Math. (2) 187 (2018), no. 2, 391–531. 

%\bibitem[BBT18]{BBT18} B.~Bakker, Y.~Brunebarbe, J.~Tsimerman:
%\textit{o-minimal GAGA and a conjecture of Griffiths},
%arXiv: 1811.12230v2.

\bibitem[BKT18]{BKT18} B.~Bakker; B.~Klingler; J.~Tsimerman:
\textit{Tame topology of arithmetic quotients and algebraicity of Hodge locus},
arXiv:1810.04801.

\bibitem[CDK95]{CattaniDeligneKaplan95} E.~Cattani, P.~Deligne, A.~Kaplan:
\textit {On the locus of Hodge classes},
J.\ of AMS, \textbf{8} (1995), 483-506. 

\bibitem[CMP17]{CMP17} J.~Carlson; S.~Müller-Stach; C.~Peters:
\textit{Period mappings and period domains},
Second edition. Cambridge Studies in Advanced Mathematics, 168. Cambridge University Press, Cambridge, 2017. xiv+562 pp.

%\bibitem[CK14]{CattaniKaplan14}E.~Cattani, A.~Kaplan:
%\textit{Algebraicity of Hodge locus for variations of Hodge structure},
%Hodge theory, complex geometry, and representation theory, 59?83, Contemp. Math., 608, Amer. Math. Soc., Providence, RI, 2014.

\bibitem[CU05-1]{ClozelUllmo05} L.~Clozel, E.~Ullmo:
\textit{\'Equidistribution de sous-vari\'et\'es sp\'eciales},
Ann.\ of Math.\ (2) \textbf{161} (2005), no.\ 3, 1571-1588.

\bibitem[CU05-2]{ClozelUllmo05-2} L.~Clozel, E.~Ullmo:
\textit{Équidistribution de mesures algébriques},
Compos. Math. 141 (2005), no. 5, 1255–1309. 
 
\bibitem[DM91]{DaniMargulis91} S.~G.~Dani, G.~A.~Margulis:
\textit{Asymptotic behaviour of trajectories of unipotent flows on homogeneous spaces},
Proc.\ Indian Acad.\ Sci.\ Math.\ Sci.\ \textbf{101} (1991), no.\ 1, 1-17.

%\bibitem[EMS96]{EMS96} A.~Eskin; S.~Mozes; N.~Shah:
%\textit{Unipotent flows and counting lattice points on homogeneous varieties},
%Ann. of Math. (2) 143 (1996), no. 2, 253–299.

\bibitem[EMS97]{EMS97} A.~Eskin; S.~Mozes; N.~Shah:
\textit{Non-divergence of translates of certain algebraic measures},
Geom. Funct. Anal. 7 (1997), no. 1, 48–80.

\bibitem[GGK12]{GGK} M.~Green, P.~Griffiths, M.~Kerr: 
\textit{Mumford-Tate groups and domains. Their geometry and arithmetic},  
Annals of Mathematics Studies {\bf 183}, Princeton University Press, 2012.

\bibitem[Griff70]{GriPerIII} P.~Griffiths:
\textit{Periods of integrals on algebraic manifolds. III. Some global differential-geometric properties of the period mapping},
Inst. Hautes Études Sci. Publ. Math. No. 38 (1970), 125–180.

\bibitem[GRT14]{GRT14} P.~Griffiths; C.~Robles; D.~Toledo:
\textit{Quotients of non-classical flag domains are not algebraic},
Algebr. Geom. 1 (2014), no. 1, 1–13. 

\bibitem[Gao17]{Gao17} Z.~Gao:
\textit{Towards the Andre-Oort conjecture for mixed Shimura varieties: the Ax-Lindemann theorem and lower bounds for Galois orbits of special points},
J. Reine Angew. Math. 732 (2017), 85–146. 

\bibitem[K17]{Klingler17} B.~Klingler,
\textit{Hodge locus and atypical intersections: conjectures},
to appear in Motives and complex multiplication.

\bibitem[KO19]{KO19}B.~Klingler, A.~Otwinowska:
\textit{On the closure of the positive Hodge locus},
arXiv:1901.10003v2.

\bibitem[KUY16]{KUY16} B.~Klingler, E.~Ullmo, A.~Yafaev:
\textit{The hyperbolic Ax-Lindemann-Weierstrass conjecture},
Publ. Math. Inst. Hautes Études Sci. 123 (2016), 333–360. 

\bibitem[KUY18]{KUY18} B.~Klingler, E.~Ullmo, A.~Yafaev:
\textit{Bi-algebraic geometry and the André-Oort conjecture},
 Algebraic geometry: Salt Lake City 2015, 319-359, Proc. Sympos. Pure Math., 97.2, Amer. Math. Soc., Providence, RI, 2018. 
 
%\bibitem[KY14]{KY14} B.~Klingler; A.~Yafaev:
%\textit{The André-Oort conjecture},
%Ann. of Math. (2) 180 (2014), no. 3, 867–925. 

\bibitem[Mil05]{Mil05} J.~Milne:
\textit{Introduction to Shimura varieties}.
Harmonic analysis, the trace formula, and Shimura varieties, 265–378,
Clay Math. Proc., 4, Amer. Math. Soc., Providence, RI, 2005.


\bibitem[Mil17]{Mil17} J.~Milne:
\textit{Algebraic groups. The theory of group schemes of finite type over a field},
Cambridge Studies in Advanced Mathematics, 170. Cambridge University Press, Cambridge, 2017. xvi+644 pp.

\bibitem[Mos55]{Mostow55} G.~D.~Mostow:
\textit{Self-adjoint groups},
Ann. of Math. (2) 62 (1955), 44–55.

\bibitem[MS95]{MozesShah95} S.~Mozes, N.~Shah: 
\textit{On the space of ergodic invariant measures of unipotent flows}, 
Ergodic Theory Dynam.\ Systems \textbf{15} (1995), no.\ 1, 149-159.

\bibitem[PT14]{PilaTsimerman14} J.~Pila; J.~Tsimerman:
\textit{Ax-Lindemann for $\mathcal{A}_g$},
Ann. of Math. (2) 179 (2014), no. 2, 659–681. 

\bibitem[Rat91-1]{Ratner91-1} M.~Ratner:
\textit{On Raghunathan's measure conjecture},
Ann. of Math. (2) 134 (1991), no. 3, 545–607.

\bibitem[Rat91-2]{Ratner91-2} M.~Ratner:
\textit{Raghunathan's topological conjecture and distributions of unipotent flows},
Duke Math. J. 63 (1991), no. 1, 235–280. 

\bibitem[Ray88]{Raynaud88} M.~Raynaud:
\textit{ Sous-vari\'et\'es d'une vari\'et\'e ab\'elienne et points de torsion}, 
in Arithmetic and Geometry, Vol. I, Progress in Math. {\bf 35} (1988).

\bibitem[Sch73]{Sch73} W.~Schmid: 
\textit{Variation of Hodge structure: the singularities of the period mapping},
Invent. Math. 22 (1973), 211–319.
 
\bibitem[Tsi18]{Tsimerman18} J.~Tsimerman:
\textit{The André-Oort conjecture for $\mathcal{A}_g$},
Ann. of Math. (2) 187 (2018), no. 2, 379–390. 

\bibitem[Ull07]{Ullmo07} E.~Ullmo: 
\textit{\'Equidistribution de sous-vari\'et\'s sp\'eciales II},
J.\ Reine Angew.\ Math. \ \textbf{606} (2007), 193-216.

%\bibitem[UY14]{UY14} E.~Ullmo; A.~Yafaev:
%\textit{Galois orbits and equidistribution of special subvarieties: towards the André-Oort conjecture},
%Ann. of Math. (2) 180 (2014), no. 3, 823–865. 

\bibitem[vdD98]{vdD98}L.~van den Dries:
\textit{Tame topology and o-minimal structures}, 
London Mathematical Society Lecture Note Series, 248. Cambridge University Press, Cambridge, 1998. x+180 pp.

\bibitem[YZ18]{YuanZhang18}X.~Yuan; S.~Zhang:
\textit{On the averaged Colmez conjecture},
Ann. of Math. (2) 187 (2018), no. 2, 533–638.
\end{thebibliography}
\end{document}